\documentclass[onefignum,onetabnum]{siamart190516}



\usepackage{lipsum}
\usepackage{amsfonts}
\usepackage{graphicx}
\usepackage{epstopdf}
\usepackage{algorithmic}

\usepackage{tikz}
\usetikzlibrary{shapes.arrows, patterns}
\usepackage{bbm}
\usepackage{upgreek}

\newcommand*{\arXiv}[1]{\bgroup\color{blue}\href{http://arxiv.org/abs/#1}{arXiv:#1}\egroup}
\newcommand*{\doi}[1]{\bgroup\color{blue}\href{http://dx.doi.org/#1}{doi:#1}\egroup}
\renewcommand*{\url}[1]{\bgroup\color{blue}\href{#1}{#1}\egroup}
\newcommand{\todo}[1]{\bgroup\color{red}#1\egroup}

\usepackage{mathtools}

\makeatletter
\renewcommand{\paragraph}{%
  \@startsection{paragraph}{4}%
  {\z@}{1.0ex \@plus .5ex \@minus .2ex}{-.7em}%
  {\normalfont\normalsize\bfseries}%
}
\makeatother

\def\<{\big\langle}
\def\>{\big\rangle}

\newcommand*{\FRO}{\operatorname{FRO}}
\newcommand*{\Train}{\operatorname{Tr}}
\newcommand*{\Pred}{\operatorname{Pr}}

\newcommand*{\BigO}{\mathrm{O}}
\newcommand*{\defeq}{\coloneqq}

\newcommand*{\Expect}{\mathbb{E}}
\newcommand*{\Cov}{\operatorname{Cov}}
\newcommand*{\Naturals}{\mathbb{N}}
\newcommand*{\Reals}{\mathbb{R}}
\newcommand*{\dx}{\,\mathrm{d}x}
\newcommand*{\dy}{\,\mathrm{d}y}
\newcommand*{\dP}{\,\mathrm{d}P}
\newcommand*{\dQ}{\,\mathrm{d}Q}

\DeclareMathOperator{\dist}{dist}
\DeclareMathOperator{\disttt}{\mathtt{dist}}
\DeclareMathOperator{\trace}{trace}
\DeclareMathOperator{\logdet}{logdet}
\DeclareMathOperator{\chol}{chol}
\DeclareMathOperator{\diag}{diag}

\definecolor{lightblue}{HTML}{a1b4c7}
\definecolor{orange}{HTML}{ea8810}
\definecolor{silver}{HTML}{b0aba8}
\definecolor{rust}{HTML}{b8420f}
\definecolor{seagreen}{HTML}{23553c}

\colorlet{lightsilver}{silver!30!white}
\colorlet{darkorange}{orange!85!black}
\colorlet{darksilver}{silver!85!black}
\colorlet{darklightblue}{lightblue!85!black}
\colorlet{darkrust}{rust!85!black}
\colorlet{darkseagreen}{seagreen!85!black}

\hypersetup{colorlinks=true,linkcolor=darkrust,citecolor=darklightblue,urlcolor=darksilver}

\newcommand*{\quark}{\setbox0\hbox{$x$}\hbox to\wd0{\hss$\cdot$\hss}}

\newcommand*{\K}{G}
\newcommand*{\KM}{\Theta}
\newcommand*{\IK}{\mathcal{L}}
\newcommand*{\IKM}{A}

\newcommand*{\Id}{\textup{Id}}

\renewcommand*{\O}{\mathcal{O}}
\newcommand*{\I}{I}
\newcommand*{\J}{J}
\newcommand*{\N}{\mathcal{N}}
\newcommand*{\argmin}{\operatorname{argmin}}
\newcommand*{\argmax}{\operatorname{argmax}}

\newcommand*{\SpSet}{\mathcal{S}}

\DeclarePairedDelimiterX{\infdivx}[2]{(}{)}{%
  #1\;\delimsize\|\;#2%
}
\newcommand{\KL}{\mathbb{D}_{\operatorname{KL}}\infdivx}

\newtheorem{condition}[theorem]{Condition}


\ifpdf
  \DeclareGraphicsExtensions{.eps,.pdf,.png,.jpg}
\else
  \DeclareGraphicsExtensions{.eps}
\fi


\newsiamremark{remark}{Remark}
\newsiamremark{hypothesis}{Hypothesis}
\crefname{hypothesis}{Hypothesis}{Hypotheses}
\crefname{condition}{Condition}{Condition}
\newsiamthm{claim}{Claim}

\headers{Factorization by Kullback-Leibler minimization
}{Florian Sch{\"a}fer, Matthias Katzfuss, and Houman Owhadi}

\title{Sparse Cholesky factorization by Kullback-Leibler minimization }

\author{%
  Florian\ Sch{\"a}fer\thanks{California Institute of Technology, MC 305-16, 1200 East California Boulevard, Pasadena, CA 91125, USA, \newline \email{florian.schaefer@caltech.edu}, Phone: (626) 395-3531,  Fax: (626) 578-0124, \newline Corresponding Author} 
  \and
  Matthias Katzfuss\thanks{Department of Statistics, Texas A\&M University}
  \and
  Houman Owhadi\thanks{Department of Computing + Mathematical Sciences, California Institute of Technology}
}

\usepackage{amsopn}

\ifpdf
\hypersetup{
  pdftitle={Sparse Cholesky factorization by Kullback-Leibler minimization},
  pdfauthor={Florian Sch{\"a}fer, Matthias Katzfuss, and Houman Owhadi}
}
\fi




\begin{document}

\maketitle

\begin{abstract}
 We propose to compute a sparse approximate inverse Cholesky factor $L$ of a dense covariance matrix $\KM$ by minimizing the Kullback-Leibler divergence between the Gaussian distributions $\mathcal{N}(0, \KM)$ and $\mathcal{N}(0, L^{-\top} L^{-1})$, subject to a sparsity constraint.
 Surprisingly, this problem has a closed-form solution that can be computed efficiently, recovering the popular Vecchia approximation in spatial statistics.
 Based on recent results on the approximate sparsity of inverse Cholesky factors of $\KM$ obtained from pairwise evaluation of Green's functions of elliptic boundary-value problems at points $\{x_{i}\}_{1 \leq i \leq N} \subset \Reals^{d}$, we propose an elimination ordering and sparsity pattern that allows us to compute $\epsilon$-approximate inverse Cholesky factors of such $\KM$ in computational complexity $\O(N \log(N/\epsilon)^d)$ in space and $\O(N \log(N/\epsilon)^{2d})$ in time.
 To the best of our knowledge, this is the best asymptotic complexity for this class of problems.
 Furthermore, our method is embarrassingly parallel, automatically exploits low-dimensional structure in the data, and can perform Gaussian-process regression in linear (in $N$) space complexity. 
 Motivated by its optimality properties, we propose to apply our method to the joint covariance of training and prediction points in Gaussian-process regression, greatly improving stability and computational cost.
 Finally, we show how to apply our method to the important setting of Gaussian processes with additive noise, compromising neither accuracy nor computational complexity. 
\end{abstract}

\begin{keywords}
  	Covariance function, Vecchia approximation, kernel matrix, sparsity, transport map, factorized sparse approximate inverse.
\end{keywords}

\begin{AMS}
  	65F30 
	(42C40, 
	65F50, 
	65N55, 
	65N75, 
	60G42, 
	68W40) 
\end{AMS}

\section{Introduction}

\paragraph{The problem} 

This work is concerned with the sparse inverse--Cholesky factorization of large dense positive-definite matrices $\KM \in \Reals^{N \times N}$, frequently arising as \emph{kernel matrices} in machine-learning methods using the ``kernel trick'' \cite{hofmann2008kernel}, as \emph{covariance matrices} in Gaussian-process (GP) statistics \cite{rasmussen2006gaussian}, and as \emph{Green's matrices} in the numerical analysis of elliptic partial differential equations (PDEs).
Naive computations of quantities such as $\KM v$, $\KM^{-1}v$, $\logdet \KM$, which are required by the applications mentioned above, scale as $\O(N^2)$ or $\O(N^3)$, and become prohibitively expensive for $N > 10^5$ on present-day  hardware.

\paragraph{Existing work}

Numerous approaches have been proposed in the literature to improve this computational complexity by taking 
advantage of the structure of $\KM$. Many rely on sparse approximations to the kernel 
matrix (e.g.,~\cite{furrer2012covariance,kaufman2008covariance}), its inverse
(e.g.,~\cite{lindgren2011explicit,roininen2011correlation,roininen2013constructing,roininen2014whittle}), or the Cholesky factor of its inverse (e.g., \cite{Vecchia1988}); also popular are 
low-rank approximations 
(e.g.,~\cite{williams2001using,smola2001sparse,fine2001efficient,bach2002kernel,fowlkes2004spectral,banerjee2008gaussian}) and combinations of low-rank and sparse approximations
(e.g.,~\cite{schwaighofer2002transductive,snelson2005sparse,quinonero2005unifying,sang2012full}).
Near-linear computational complexity can be achieved by applying these mechanisms hierarchically on multiple scales.
Examples of hierarchical sparse approximations include wavelet methods (e.g.,~\cite{beylkin1991fast}), the multi-resolution approximation \cite{katzfuss2016multi,Katzfuss2017b}, and (implicitly) some versions of the Vecchia approximation 
\cite{Katzfuss2017a}.
Hierarchical application of low-rank approximations leads to \emph{hierarchical matrices}
\cite{hackbusch1999sparse,hackbusch2000sparse,hackbusch2002data,chandrasekaran2004fast,ambikasaran2013mathcal,ho2016hierarchical,coulier2016efficient,coulier2017inverse,takahashi2017application},
which are an algebraic abstraction of the fast multipole method \cite{greengard1987fast}.
\cite{Schafer2017} proposed an approximation based on incomplete Cholesky factorization that can be interpreted as 
both hierarchical sparse and hierarchical low-rank.

The best asymptotic (in $N$ and $\epsilon$) memory complexity for the 
$\epsilon$-accurate compression of an $N \times N$ kernel matrix with finitely smooth covariance function and $d$-dimensional feature space is 
$\O(N \log^d(N/\epsilon))$, which is achieved by wavelets in nonstandard form 
(\cite{beylkin1991fast}, for asymptotically smooth kernels), or sparse inverse Cholesky factors of $\KM$ (\cite{Schafer2017}, based on results in \cite{owhadi2017universal, OwhScobook2018}).
However, we are not aware of  \emph{practical} algorithms that provably compute such approximations in near-linear time from\footnote{Hidden constants in all asymptotic complexities may depend on the dimension $d$ of the dataset.} $\O(N \log^d(N/\epsilon))$ entries of $\KM$ chosen a priori.

\paragraph{Our method}

We propose to compute a sparse approximate inverse Cholesky factor $L$ of $\KM$, by minimizing with respect to $L$ and subject to a sparsity constraint, the Kullback-Leibler (KL) divergence between two centered multivariate normal distributions with covariance matrices  $\KM$ and $(LL^\top)^{-1}$.
Surprisingly, this minimization problem has a closed-form solution, enabling the efficient computation of  optimally accurate Cholesky factors for any specified sparsity pattern. 

The resulting approximation can be shown to be equivalent to the Vecchia approximation of Gaussian processes \cite{Vecchia1988},
which has become very popular for the analysis of geospatial data
(e.g.,~\cite{stein2004approximating,Datta2016,Sun2016,guinness2016permutation,Katzfuss2017a,Katzfuss2018}); to the best of our knowledge, 
rigorous convergence rates and error bounds were previously unavailable for Vecchia approximations, and this work is the first one presenting such results.
An equivalent approximation has also been proposed by \cite{kaporin1990alternative} and \cite{kolotilina1993factorized} in the literature on factorized sparse approximate inverse (FSAI) preconditioners of (typically) sparse matrices (see e.g., \cite{benzi1999comparative} for a review and comparison, \cite{chow2014preconditioned} for an application to dense kernel matrices); however, its KL-divergence optimality has not been observed before.
KL-minimization has also been used to obtain sparse lower-triangular transport maps by \cite{marzouk2017}; while this literature is mostly concerned with the efficient sampling of non-Gaussian probability measures, the present work shows that an analogous approach can be used to obtain fast algorithms for numerical linear algebra if the sparsity pattern is chosen appropriately.

\paragraph{State-of-the-art computational complexity}

The computational complexity and approximation accuracy of our approach depend on the choice of elimination ordering and sparsity pattern. 
We propose a particular choice, similar to \cite{guinness2016permutation} and \cite{Schafer2017}, that is motivated by the \emph{screening effect} (e.g.,~\cite{stein2002screening,stein2011when,bao2020screening}), which implies (approximate) conditional independence for many kernels of common interest. 
By using a grouping algorithm similar to the heuristics proposed by \cite{ferronato2015novel} and \cite{guinness2016permutation}, we can show that the approximate inverse Cholesky factor can be computed in computational complexity $\O(N \rho^{2d})$ in time and $\O(N \rho^{d})$ in space, using only $\O(N \rho^{d})$  entries of the original kernel matrix $\KM$, where $\rho$ is a tuning parameter  trading accuracy for computational efficiency.

The authors of \cite{Schafer2017} observe that recent results on  numerical homogenization and operator-adapted wavelets \cite{malqvist2014localization,kornhuber2016analysis,owhadi2017universal} imply the exponential decay of the inverse Cholesky factors of $\KM$, if the kernel function is the Green's function of an elliptic boundary-value problem. 
Using these results, we prove that in this setting, an $\epsilon$-approximation of $\KM$ can be obtained by choosing $\rho \approx \log(N/\epsilon)$.
This leads to the best-known trade-off between computational complexity and accuracy  for this class of kernel matrices.

\paragraph{Practical advantages}

Our method  has important \emph{practical} advantages complementing its theoretical and asymptotic properties. In many  GP regression applications, large values of $\rho$ are computationally intractable with present-day resources. 
By incorporating prediction points in the computation of KL-optimal inverse-Cholesky factors, we obtain a GP regression algorithm that is accurate even for small ($\approx 3$) values of $\rho$, including in settings where truncation of the \emph{true} Cholesky factor of $\KM^{-1}$ to the same sparsity pattern fails completely. 

For other hierarchy-based methods, the computational complexity depends exponentially on the dimension $d$ of the dataset.
In contrast, because the construction of the ordering and sparsity pattern only uses pairwise distances between points, our algorithms automatically adapt to low-dimensional structure in the data and operate in complexities identified by replacing $d$ with the \emph{intrinsic dimension} $\tilde{d} \leq d$ of the dataset.

An important limitation of existing methods based on the screening effect \cite{guinness2016permutation,Schafer2017,Katzfuss2018} is that they deteriorate when applied to independent sums of two GPs, such as when combining a GP with additive Gaussian white noise.
Extending ideas proposed in \cite{Schafer2017}, we are able to fully preserve both the accuracy and asymptotic complexity of our method over a wide range of noise levels.
To the best of our knowledge, this is the first time this has been achieved by a method based on the screening effect.

Finally, our algorithm is intrinsically parallel because it allows each column of the sparse factor to be computed independently (as in the setting of the Vecchia approximation, factorized sparse approximate inverses, and lower-triangular transport maps).
Furthermore, we show that in the context of GP regression, the loglikelihood, the posterior mean, and the posterior variance can be computed in $\O(N + \rho^{d})$ space complexity. 
In a parallel setting, we require $\O(\rho^d)$ communication between the different workers for every $\O(\rho^{3d})$ floating-point operations, resulting in a total communication complexity of $\O(N)$. 
Here, most of the floating-point operations arise from calls to highly optimized BLAS and LAPACK routines.

\paragraph{Outline}

The remainder of this article is organized as follows. In \cref{sec:cholesky}, we show how sparsity-constrained KL-minimization  yields a simple formula for approximating the inverse Cholesky factor of a positive-definite matrix. 
In \cref{sec:ordSparse}, we present elimination orderings and sparsity patterns that provably lead to state-of-the-art trade-off between computational complexity and accuracy when applied to Green's functions of elliptic PDEs, and that we recommend more generally for covariance matrices of Gaussian processes that are subject to a screening effect.
In \cref{sec:theory}, we bound the computational complexity of our algorithm and rigorously quantify its complexity/accuracy trade-off. 
In \cref{sec:extensions}, we showcase three extensions of our method, allowing the treatment of additive noise due to measurement errors, improving the speed and accuracy of prediction, and enabling GP regression at \emph{linear} complexity in space and communication (between workers) in a distributed setting.
In \cref{sec:numerics}, we present numerical experiments applying our method to GP regression and to boundary-element methods for the solution of elliptic PDEs. We summarize our findings in \cref{sec:conclusions}.
The proofs of the main results are deferred to an appendix. 
Further details on the construction of the ordering and sparsity pattern, as well as on the implementation of some variants of our method are provided in the supplementary material.

\section{Cholesky factorization by KL-minimization\label{sec:cholesky}}

The Kullback-Leibler divergence between two probability measures $P$ and $Q$ is defined as $\KL*{P}{Q}= \int \log(\dP/\dQ) \dP$. 
If $Q$ is an approximation of $P$, then the KL divergence is the expected difference between the associated true and approximate log-densities, and so its minimization is directly relevant for accurate approximations of GP inference, including GP prediction and likelihood-based inference on hyperparameters.
By virtue of its connection to the likelihood ratio test \cite{eguchi2006interpreting}, the KL divergence can also be interpreted as the strength of the evidence that samples from $P$ were not instead obtained from $Q$.
If $P$ and $Q$ are both $N$-variate centered normal distributions, the KL divergence is equivalent to a popular loss function for covariance-matrix estimation \cite{James1961}, and it can be written as
\begin{equation}
\label{eqn:KLnormal}
2 \, \KL*{\N(0,\KM_1)}{\N(0,\KM_2)} =  \trace(\KM_2^{-1}\KM_1) + \logdet(\KM_2) - \logdet(\KM_1) - N. 
\end{equation}

Let $\KM$ be a positive-definite matrix of size $N \times N$. Given a lower-triangular sparsity set $S \subset \I \times \I$, where $\I = \{1,\ldots,N\}$, we want to use 
\begin{equation}\label{eqn:defVarL}
  L \defeq \argmin_{\hat{L} \in \SpSet} \KL*{\N\big(0,\KM\big)}{\N\big(0, (\hat{L} \hat{L}^{\top})^{-1}\big)}
\end{equation}
as approximate Cholesky factor for $\KM^{-1}$, for 
$\SpSet \defeq \left\{A \in \Reals^{N \times N}: A_{ij} \neq 0 \Rightarrow \left(i,j\right) \in S \right\}$.
While solving the non-quadratic program~\cref{eqn:defVarL} might seem challenging, it turns out that it has a closed-form solution that can be computed efficiently:
\begin{theorem}
    \label{thm:repL}
    The nonzero entries of the $i$-th column of $L$ as defined in Equation~\eqref{eqn:defVarL} are given by
    \begin{equation}
     \label{eqn:defcolL}
     L_{s_i,i} =  \frac{\KM_{s_i, s_i}^{-1} \mathbf{e}_1}{\sqrt{\mathbf{e}_1^{\top} 
     \KM_{s_i, s_i}^{-1} \mathbf{e}_1}},
    \end{equation}
    where $s_i \defeq \left\{ j: \left(j,i\right) \in S\right\}$,
    $\KM_{s_i, s_i}^{-1}:=(\KM_{s_i, s_i})^{-1}$, $\KM_{s_i, s_i}$ is the restriction of $\KM$ to the set of indices $s_i$,
    and 
    $\mathbf{e}_1 \in \Reals^{\#s_i \times 1}$ is the vector 
    with the first entry equal to one and all other entries equal to zero.
    Using this formula, $L$ can be computed in computational complexity 
    $\mathcal{O}\big( \#S + (\max_{1 \leq i \leq N} \# s_i )^2 \big)$ in space and 
    $\mathcal{O}\big( \sum_{i=1}^N \left( \# s_i \right)^3 \big)$ in time.
\end{theorem}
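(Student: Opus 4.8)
The plan is to exploit the explicit formula \eqref{eqn:KLnormal} for the KL divergence between centered Gaussians and to show that, after discarding all terms independent of $\hat{L}$, the resulting objective decouples into $N$ independent per-column problems, each of which admits a closed-form minimizer. Substituting $\KM_1 = \KM$ and $\KM_2 = (\hat{L}\hat{L}^{\top})^{-1}$ into \eqref{eqn:KLnormal}, noting that $\KM_2^{-1} = \hat{L}\hat{L}^{\top}$ and that $\hat{L}$ lower-triangular gives $\logdet(\hat{L}\hat{L}^{\top}) = 2\sum_{i} \log \hat{L}_{ii}$, minimizing over $\hat{L} \in \SpSet$ is equivalent to minimizing
\[
  \trace(\hat{L}^{\top} \KM \hat{L}) - 2\sum_{i=1}^N \log \hat{L}_{ii},
\]
since $\logdet(\KM)$ and $N$ do not depend on $\hat{L}$.

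The key observation, which I regard as the crux of the argument, is that this objective separates across columns. Writing $\hat{L}_{:,i}$ for the $i$-th column and using $\trace(\hat{L}^{\top} \KM \hat{L}) = \sum_i \hat{L}_{:,i}^{\top} \KM \hat{L}_{:,i}$, the constraint $\hat{L} \in \SpSet$ forces the nonzero entries of $\hat{L}_{:,i}$ to lie in $s_i$, so that $\hat{L}_{:,i}^{\top} \KM \hat{L}_{:,i} = \hat{L}_{s_i,i}^{\top} \KM_{s_i,s_i} \hat{L}_{s_i,i}$. Because the logarithmic term depends only on the diagonal entries $\hat{L}_{ii}$, the objective becomes a sum of terms, one for each column, and the columns may be optimized independently. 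Adopting the convention that $i$ is listed first in $s_i$ (so the diagonal entry is selected by $\mathbf{e}_1$), the $i$-th subproblem is to minimize $f(v) \defeq v^{\top} \KM_{s_i,s_i} v - 2\log(\mathbf{e}_1^{\top} v)$ over $v = \hat{L}_{s_i,i}$ on the half-space $\{\mathbf{e}_1^{\top} v > 0\}$.

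Next I would solve each subproblem by its first-order conditions. The function $f$ is strictly convex on $\{\mathbf{e}_1^{\top} v > 0\}$, since $\KM_{s_i,s_i}$ is positive definite as a principal submatrix of the positive-definite $\KM$, and $-\log$ is convex; hence any critical point is the unique global minimizer. Setting $\nabla f(v) = 2\KM_{s_i,s_i} v - 2(\mathbf{e}_1^{\top} v)^{-1}\mathbf{e}_1 = 0$ yields $v = (\mathbf{e}_1^{\top} v)^{-1}\KM_{s_i,s_i}^{-1}\mathbf{e}_1$; left-multiplying by $\mathbf{e}_1^{\top}$ gives $(\mathbf{e}_1^{\top} v)^2 = \mathbf{e}_1^{\top} \KM_{s_i,s_i}^{-1}\mathbf{e}_1$, and back-substituting the positive root reproduces exactly \eqref{eqn:defcolL}.

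Finally, for the complexity bounds, the per-column work is dominated by solving the dense $\#s_i \times \#s_i$ system $\KM_{s_i,s_i} v = \mathbf{e}_1$ via a Cholesky factorization, costing $\O((\#s_i)^3)$ and summing to the stated time bound $\O(\sum_{i=1}^N (\#s_i)^3)$; the storage consists of the $\#S$ nonzero entries of $L$ together with a reusable dense workspace of size $(\max_{i} \#s_i)^2$ holding the current submatrix, giving $\O(\#S + (\max_{i} \#s_i)^2)$ in space. I expect the only delicate point to be the column-decoupling step, namely verifying that both the trace term (via the sparsity constraint) and the log-determinant term split cleanly over columns; once this is established the remaining optimization and complexity accounting are routine.
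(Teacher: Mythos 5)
Your proof is correct and follows essentially the same route as the paper's: reduce the KL objective to $\trace(\hat{L}^{\top}\KM\hat{L}) - 2\sum_{i}\log \hat{L}_{ii}$, observe that under the sparsity constraint it decouples into independent column subproblems $\hat{L}_{s_i,i}^{\top}\KM_{s_i,s_i}\hat{L}_{s_i,i} - 2\log\hat{L}_{ii}$, solve each by the stationarity condition to obtain Equation~\eqref{eqn:defcolL}, and account for cost via a dense Cholesky solve per column. Your explicit strict-convexity argument on the half-space $\{\mathbf{e}_1^{\top}v > 0\}$, which guarantees that the critical point is the unique global minimizer, is a small refinement of rigor that the paper leaves implicit when it simply sets the derivative to zero.
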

\begin{proof}
See \cref{apssec:notAggregatedPattern}.
\end{proof}

Compared to ordinary sparse Cholesky factorization (see \cref{alg:ichol}), the algorithm implied by \cref{thm:repL} has the advantage of giving the \emph{best} possible Cholesky factor (as measured by KL) for a given sparsity pattern.
Furthermore, it is embarrassingly parallel --- all evaluations of Equation~\eqref{eqn:defcolL} can be performed independently for different $i$.
While the computational complexity is slightly worse than the one of in-place incomplete Cholesky factorization, we will show in \cref{thm:complexity} that for important choices of $S$, the time complexity can be reduced to 
$\mathcal{O}\big( \sum_{k =1}^N \left(\# s_k \right)^2 \big)$, matching the computational 
complexity of incomplete Cholesky factorization.

The formula in Equation~\eqref{eqn:defcolL} can be shown to be equivalent to the formula that has been used to compute the Vecchia approximation \cite{Vecchia1988} in spatial statistics, without explicit awareness of the KL-optimality of the resulting $L$. 
In the literature on factorized sparse approximate inverses, the above formula was derived for minimizers of $\|\Id - L\chol(\KM)\|_{\FRO}$ subject to the constraints $L \in \mathcal{S}$ and $\diag(L \KM L^{\top}) = 1$ \cite{kolotilina1993factorized}, and for minimizers of the Kaporin condition number $(\trace( \KM LL^{\top})/N)^N/\det(\KM(LL^{\top}))$ subject to the constraint $L\in \mathcal{S}$ \cite{kaporin1990alternative}.
The KL-divergence, as opposed to $\|\Id - L\chol(\KM)\|_{\FRO}$, strongly penalizes zero eigenvalues of $\KM L L^{\top}$, which explains the observation of \cite{eremin1998factorized} that adding the constraint $\diag(L \KM L^{\top}) = 1$ tends to improve the spectral condition number of the resulting preconditioner, despite increasing the size of the fidelity term $\|\Id - L\chol(\KM)\|_{\FRO}$.
\cite{marzouk2017} showed that the embarrassingly parallel nature of KL-minimization is even preserved when replacing the Cholesky factors with nonlinear transport maps with Knothe-Rosenblatt structure.
As part of ongoing work on the sample complexity of the estimation of transport maps, \cite{baptista2020adaptive} discovered representations very similar to Equation~\eqref{eqn:defcolL}, independently of the present work.

Based on the results above, we propose the following procedure to approximate a large positive-definite matrix $\KM$: 
\begin{enumerate}
    \item Order the degrees of freedom (i.e., rows and columns of $\KM$) according to some ordering $\prec$.
    \item Pick a sparsity set $S \subset \I \times \I$.
    \item Use Formula~\eqref{eqn:defcolL} to compute the lower-triangular matrix $L$ with nonzero entries contained in $S$ that minimizes $\KL*{\N\big(0,\KM\big)}{\N\big(0, (L L^{\top})^{-1}\big)}$.
\end{enumerate}
In the next section, we will describe how to implement all three steps of this procedure in the more concrete setting of positive-definite matrices obtained from the evaluation of a finitely smooth covariance function at pairs of points in $\Reals^d$.

\section{Ordering and sparsity pattern motivated by the screening effect}
\label{sec:ordSparse}

The quality of the approximation given by Equation~\eqref{eqn:defVarL} depends on the 
ordering of the variables and the sparsity pattern $S$.
For kernel matrices arising from finitely smooth Gaussian processes, we propose specific orderings and sparsity patterns, which can be constructed in near-linear computational complexity and which lead to good approximations for many $\KM$ of practical interest.

\subsection{The reverse-maximin ordering and sparsity pattern\label{ssec:maximin}}

\begin{figure}
	\centering	
	\includegraphics[width=0.49\textwidth]{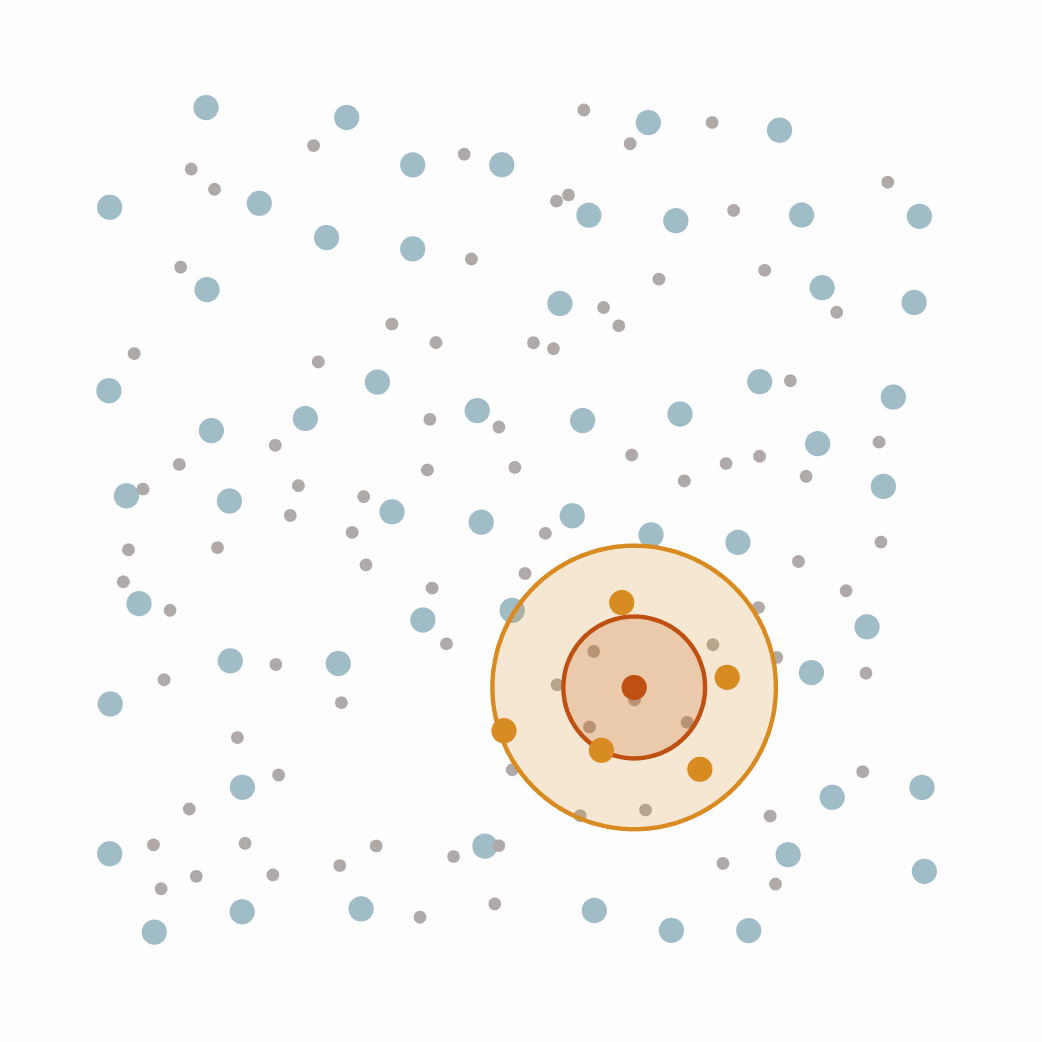}
	\includegraphics[width=0.49\textwidth]{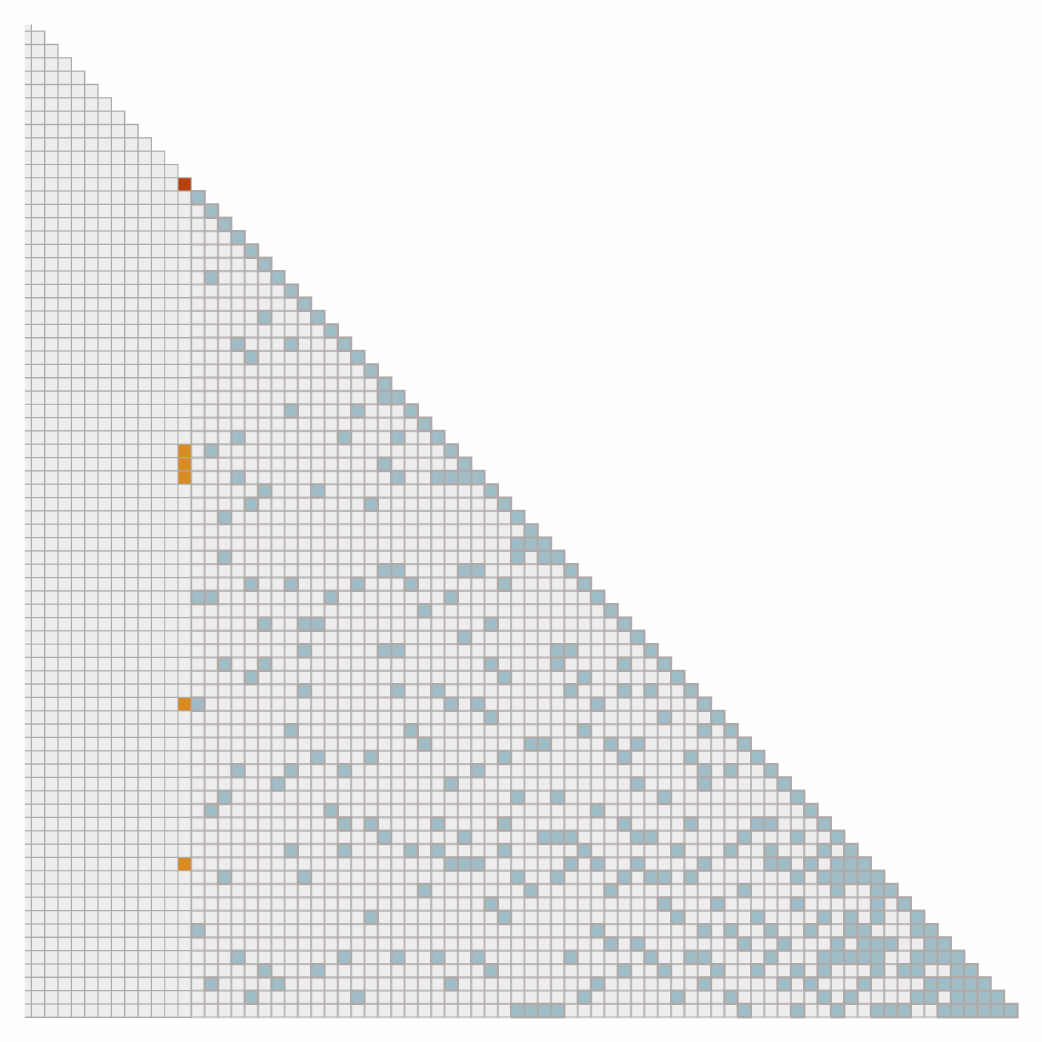}
	\caption{To obtain the reverse-maximin ordering, for $k=N-1,N-2,\ldots,1$, we successively select the {\color{darkrust} point $x_{i_k}$} that has the largest distance {\color{darkrust}$\ell_{i_k}$} to those {\color{darklightblue}points $x_{i_{k+1}},\ldots,x_{i_N}$ selected previously (shown as enlarged)}. All previously selected points {\color{darkorange} within distance $\rho \ell_i$} of {\color{darkrust}$x_{i_k}$} (here, $\rho = 2$) form the {\color{orange} $k$-th column of the sparsity pattern}. \label{fig:sortsparse}}
\end{figure}

Assume that $\K$ is the covariance function of a Gaussian process that is conditioned to be zero on (the possibly empty set) $\partial \Omega$, and the kernel matrix $\KM \in \Reals^{\I \times \I}$ is obtained as $\KM_{ij} \defeq \K(x_i,x_j)$ for a set of locations $\left\{x_i\right\}_{i \in \I} \subset \Omega$.

The \emph{reverse maximum-minimum distance (reverse-maximin) ordering} \cite{guinness2016permutation,Schafer2017} of $\left\{x_i\right\}_{i \in \I}$
is achieved by selecting the last index as
\begin{equation}
  i_N \defeq \argmax_{i \in \I} \dist\left(x_i,\partial \Omega\right)
\end{equation}
(or arbitrarily for $\partial\Omega = \emptyset$),
and then choosing sequentially for $k=N-1,N-2,\ldots,1$ the index that is furthest away from $\partial \Omega$ and those indices that were already picked:
\begin{equation}
  i_{k} \defeq \argmax_{i \in \I \setminus\left\{i_{k+1},\dots,i_N\right\}} 
  \dist \left( x_i, \left\{x_{i_{k+1}}, \dots, x_{i_N}\right\} \cup \partial \Omega \right).
\end{equation}
Write $\ell_{i_k}=\dist \left( x_{i_k}, \left\{x_{i_{k+1}}, \dots, x_{i_N}\right\} \cup \partial \Omega \right)$, and write $i \prec j$ if $i$ precedes $j$ in the reverse-maximin ordering.
We collect the $\left\{\ell_i\right\}_{i \in \I}$ into a vector denoted by $\ell$.

For a tuning parameter $\rho \in \Reals^+$, we select the sparsity set $S_{\prec,\ell,\rho} \subset \I \times \I$ as
\begin{equation}
  S_{\prec,\ell,\rho} \defeq \left\{ (i,j): i \succeq j, \dist( x_i, x_j ) \leq \rho \ell_j \right\}.
\end{equation}
The reverse-maximin ordering and sparsity pattern is illustrated in \cref{fig:sortsparse}.

By a minor adaptation of \cite[Alg.~3]{Schafer2017}, the reverse-maximin ordering and sparsity pattern can be constructed using \cref{alg:sortSparseTrunc} (see \cref{apsec:sortSparse}) in computational complexity $\O(N \log^2(N) \rho^{\tilde{d}})$ in time and $\O(N \rho^{\tilde{d}})$ in space, where $\tilde{d} \leq d$ is the intrinsic dimension of the dataset, as will be defined in \cref{cond:intrinsicDimension}.
The inverse Cholesky factors $L$ can then be computed using Equation~\eqref{eqn:defcolL}, as in \cref{alg:notAggregated}.
\begin{figure}[t]
	\begin{minipage}[t]{6.0cm}
		\vspace{0pt}
		\begin{algorithm}[H]
		    \textbf{Input:}  $\K$, $\left\{x_i\right\}_{i \in \I}$, $\prec$, $S_{\prec,l,\rho}$ \\
            \textbf{Output:} $L\in \Reals^{N \times N}$ l. triang. in $\prec$\\
		    \begin{algorithmic}[1]
    			\FOR{$k \in \I$}
    			    \FOR{ $i,j \in s_{k}$}
    			        \STATE $\left(\KM_{s_k,s_k}\right)_{ij} \leftarrow \K(x_i,x_j)$
    			    \ENDFOR 
    			\STATE $L_{s_k,k} \leftarrow \KM_{s_k,s_k}^{-1} \mathbf{e}_k$
    			\STATE $L_{s_k,k} \leftarrow L_{s_k,k} / \sqrt{L_{k,k}}$
    			\ENDFOR
    			\RETURN $L$
			\end{algorithmic}
			\caption{\label{alg:notAggregated}Without aggregation}
		\end{algorithm}
	\end{minipage}
	\begin{minipage}[t]{6.0cm}
		\vspace{0pt}
		\begin{algorithm}[H]
			\textbf{Input:} $\K$, $\left\{x_i\right\}_{i \in \I}$, $\prec$, $S_{\prec,l,\rho,\lambda}$\\
			\textbf{output:} $L\in \Reals^{N \times N}$ l. triang. in $\prec$\\
			\begin{algorithmic}[1]
			    \FOR{$\tilde{k} \in \tilde{\I}$}
			        \FOR{ $i,j \in s_{\tilde{k}}$}
			            \STATE $\left(\KM_{s_{\tilde{k}},s_{\tilde{k}}}\right)_{ij} \leftarrow \K(x_i,x_j)$
			        \ENDFOR
			        \STATE $U \leftarrow P^{\updownarrow}\chol( P^{\updownarrow} \KM_{s_{\tilde{k}}, s_{\tilde{k}}} P^{\updownarrow})P^{\updownarrow}$
			        \FOR{$k \leadsto \tilde{k}$}
     		    	    \STATE $L_{s_k,k} \leftarrow U^{-\top} \mathbf{e}_k$
			        \ENDFOR
			    \ENDFOR
			    \RETURN $L$
			\end{algorithmic}
			\caption{\label{alg:aggregated} With aggregation}
		\end{algorithm}
	\end{minipage}
	\caption{KL-minimization with and without using aggregation. For notational convenience, all matrices are assumed to have row-- and column ordering according to $\prec$. $P^{\updownarrow}$ denotes the order-reversing permutation matrix, and $\mathbf{e}_k$ is the vector with $1$ in the $k$-th component and zero elsewhere.} 
\end{figure}

\subsection{Aggregated sparsity pattern\label{ssec:aggregated}}

It was already observed by \cite{ferronato2015novel} in the context of sparse approximate inverses, and by \cite{stein2004approximating,guinness2016permutation} in the context of the Vecchia approximation, that a suitable grouping of the degrees of freedom makes it possible to \emph{reuse} Cholesky factorizations of the matrices $\KM_{s_i,s_i}$ in Equation~\eqref{eqn:defcolL} to update multiple columns at once.
The authors of \cite{guinness2016permutation,ferronato2015novel} propose grouping heuristics based on the sparsity graph of $L$ and show empirically that they
lead to improved performance.
In contrast, we propose a grouping procedure based on geometric information and prove rigorously that it allows us to reach the best asymptotic complexity in the literature, in a more concrete setting.

Assume that we have already computed the reverse-maximin ordering $\prec$ and sparsity pattern $S_{\prec, \ell, \rho}$, and that we have access to the $\ell_i$ as defined above. 
We will now aggregate the points into groups called \emph{supernodes}, consisting of points that are close in both location and ordering. To do so, we pick at each step the first (w.r.t.\ $\prec$) 
index $i \in \I$ that has not been aggregated into a supernode yet and then we aggregate into a common supernode the indices in $\{j: (i,j) \in S_{\prec, \ell, \rho}, \ell_j \leq \lambda \ell_i\}$ for some $\lambda > 1$ ($\lambda \approx 1.5$ is typically a good choice) that have not been aggregated yet. 
We proceed with this procedure until every node has been aggregated into a supernode.
We write $\tilde{I}$ for the set of all supernodes; for $i \in \I, \tilde{i} \in  \tilde{I}$, we write $i \leadsto \tilde{i}$ 
if $\tilde{i}$ is the supernode to which $i$ has been aggregated.
We furthermore define $s_{\tilde{i}} \defeq \left\{ j : \exists i \leadsto \tilde{i}, j \in s_i\right\}$ and introduce the aggregated sparsity pattern $\tilde{S}_{\prec,\ell,\rho,\lambda} \defeq \bigcup_{k \leadsto \tilde{k}} \left\{ (i,k): k \preceq i \in s_{\tilde{k}}\right\}$.
This sparsity pattern, while larger than $S_{\prec,\ell,\rho}$, can be represented efficiently by keeping track of the set of \emph{parents} (the $k \in I$ such that $k \leadsto s_{\tilde{k}}$) and \emph{children} (the $i \in s_{\tilde{k}}$) of each supernode, rather than the individual entries (see \cref{fig:aggregation} for an illustration).
For well-behaved (cf.\ \cref{thm:complexity}) sets of points, we obtain $\O(N \rho^{-d})$ supernodes, each with $\O(\rho^d)$ parents and children, thus improving the cost of storing the sparsity pattern from $\O(N \rho^{d})$ to $\O(N)$.

\begin{figure}
	\centering
		\begin{tikzpicture}[scale=0.7]
			\input{figures/tikz/aggregation.tex}
		\end{tikzpicture}
    \caption{The left figure illustrates the original pattern $S_{\prec,\ell,\rho}$. For each orange point, we need to keep track of its interactions with all points within a circle of radius $\approx \rho$. In the right figure, the points have been collected into a supernode, which can be represented by a list of \emph{parents} (the orange points within an inner sphere of radius $\approx \rho$) and \emph{children} (all points within a radius $\approx 2\rho$).  }
    \label{fig:aggregation}
\end{figure}

While the above aggregation procedure can be performed efficiently once $\prec$ and $S_{\prec,\ell,\rho}$ are computed, it is possible 
to directly compute $\prec$ and an outer approximation $\bar{S}_{\prec,\ell,\rho,\lambda} \supset \tilde{S}_{\prec,\ell,\rho,\lambda}$ in computational complexity $\O(N)$ in space and $\O(N \log(N))$ in time.
$\bar{S}_{\prec,\ell,\rho,\lambda}$ can either be used directly, or it can be used to compute $\tilde{S}_{\prec,\ell,\rho,\lambda}$ in $\O(N)$ in space and $\O(N \log(N) \rho^d)$ in time, using a simple and embarrassingly parallel algorithm.
Details are given in \cref{apsec:sortSparse}.

In addition to reducing the memory cost, the aggregated ordering and sparsity pattern allows us to compute the Cholesky factors (in reverse ordering) $\KM_{s_{\tilde{k}},s_{\tilde{k}}} = UU^{\top}$ once for each supernode and then use it to compute the $L_{s_k,k}$ for all $k \leadsto \tilde{k}$ as in \cref{alg:aggregated} (see \cref{fig:supernodes} for an illustration).
\begin{figure}
		\centering
		\begin{minipage}{.4\textwidth}
			\begin{tikzpicture}[scale=0.40]
	\draw (-4, 4) -- (-4, 0) -- (0, 0) -- cycle; 
	\filldraw[fill=silver] (-3.2, 3.2) -- (-3.2, 3.00 ) -- (-3.00, 3.00) -- cycle;
	\draw (-3.2, 3.00) rectangle (-3.0, 0.00);
	\draw[fill=silver] (-3.2, 1.80) rectangle (-3.0, 0.80);

	\filldraw[fill=orange] (-3.0, 3.0) -- (-3.0, 2.80 ) -- (-2.80, 2.80) -- cycle;
	\draw (-3.0, 2.80) rectangle (-2.8, 0.00);
	\draw[fill=orange] (-3.0, 1.60) rectangle (-2.8, 0.60);

	\filldraw[fill=lightblue] (-2.0, 2.0) -- (-2.0, 1.8 ) -- (-1.80, 1.8) -- cycle;
	\draw (-2.0, 1.80) rectangle (-1.8, 0.00);
	\draw[fill=lightblue] (-2.0, 1.20) rectangle (-1.8, 0.40);

	\node[draw, color=silver, line width=1.0, single arrow,
		minimum height=14mm,
		minimum width=2mm,
      single arrow head extend=1mm,
		anchor=west, rotate=0] at (0.2,2) {};
		
\begin{scope}[xshift=8.5cm]
	\draw (-4, 4) -- (-4, 0) -- (0, 0) -- cycle; 
	\filldraw[fill=silver] (-3.2, 3.2) -- (-3.2, 3.00 ) -- (-3.00, 3.00) -- cycle;
	\draw (-3.2, 3.00) rectangle (-3.0, 0.00);
	\draw[fill=silver, fill opacity = 0.6] (-3.2, 3.0) rectangle (-3.0, 2.80);
	\draw[fill=silver] (-3.2, 1.80) rectangle (-3.0, 0.80);
	\draw[fill=silver, fill opacity = 0.6] (-3.2, 2.00) rectangle (-3.0, 1.80);
	\draw[fill=silver, fill opacity = 0.6] (-3.2, 0.8) rectangle (-3.0, 0.40);

	\filldraw[fill=orange] (-3.0, 3.0) -- (-3.0, 2.80 ) -- (-2.80, 2.80) -- cycle;
	\draw (-3.0, 2.80) rectangle (-2.8, 0.00);
	\draw[fill=orange, fill opacity = 0.6] (-3.0, 2.00) rectangle (-2.8, 1.60);
	\draw[fill=orange] (-3.0, 1.60) rectangle (-2.8, 0.60);
	\draw[fill=orange, fill opacity = 0.6] (-3.0, 0.60) rectangle (-2.8, 0.40);

	\filldraw[fill=lightblue] (-2.0, 2.0) -- (-2.0, 1.8 ) -- (-1.80, 1.8) -- cycle;
	\draw (-2.0, 1.80) rectangle (-1.8, 0.00);
	\draw[fill=lightblue, fill opacity = 0.6] (-2.0, 1.80) rectangle (-1.8, 1.20);
	\draw[fill=lightblue] (-2.0, 1.20) rectangle (-1.8, 0.40);
\end{scope}
			\end{tikzpicture}
		\end{minipage}
		~~~~~~~~~
		\begin{minipage}{.4\textwidth}
			\begin{tikzpicture}[scale=0.65]
				    \draw[pattern=north east lines, pattern color=silver] (-2.0, 2.0) -- (-2.0, 0.0) -- (0.0, 0.0) -- cycle;   
		\node at (-2.5, 1.0) {\Huge \textcolor{silver}{$\cdot$}};

		\draw[pattern=north west lines, pattern color=orange] (-1.8, 1.8) -- (-1.8, 0.0) -- (0.0, 0.0) -- cycle;   

		\filldraw[fill = lightblue, fill opacity = 0.5] (-1.6, 1.6) -- (-1.6, 0.0) -- (0.0, 0.0) -- cycle;

		\begin{scope}[xshift=-3.0cm, yscale=-1, rotate=90]
			\draw[pattern=north east lines, pattern color=silver] (-2.0, 2.0) -- (-2.0, 0.0) -- (0.0, 0.0) -- cycle;   

			\draw[pattern=north west lines, pattern color=orange] (-1.8, 1.8) -- (-1.8, 0.0) -- (0.0, 0.0) -- cycle;   

			\filldraw[fill = lightblue, fill opacity = 0.5] (-1.6, 1.6) -- (-1.6, 0.0) -- (0.0, 0.0) -- cycle;   
		\end{scope}

		\node at (0.25, 1.0) {\Huge \textcolor{silver}{$=$}};

		\draw[pattern=north east lines, pattern color=silver] (1.0, 2.0) rectangle (3.0, 0.0);

    \draw[pattern=north west lines, pattern color=orange] (1.2, 1.8) rectangle (3.0, 0.0);

    \filldraw[fill = lightblue, fill opacity = 0.5] (1.4, 1.6) rectangle (3.0, 0.0);
			\end{tikzpicture}
		\end{minipage}
    \caption{(Left:) By adding a few nonzero entries to the sparsity pattern, the sparsity patterns of columns in $s_{\tilde{k}}$ become subsets of one another. 
    (Right:) Therefore, the matrices $\{\KM_{s_k,s_k}\}_{k \leadsto \tilde{k}}$, which need to be be inverted to compute the columns $L_{:,k}$ for $k \leadsto \tilde{k}$, become submatrices of one another. 
    Thus, submatrices of the Cholesky factors of $\KM_{s_{\tilde{k}}, s_{\tilde{k}}}$ can be used as factors of $\KM_{s_k,s_k}$ for any $k \leadsto \tilde{k}$.} 
    \label{fig:supernodes}
\end{figure}

As we show in the next section, this allows us to reduce the computational complexity from $\O(N\rho^{3d})$ to $\O(N\rho^{2d})$ for sufficiently well-behaved sets of points.

\subsection{Theoretical guarantees}
\label{sec:theory}

We now present our rigorous theoretical result bounding the computational complexity and approximation error of our method. 
Proofs and additional details are deferred to \cref{apsec:proofs}.

\begin{remark}
	As detailed in \cref{apsec:proofs}, the results below apply to more general \emph{reverse $r$-maximin} orderings, which can be computed in complexity $\O(N\log(N))$, improving over reverse-maximin orderings by a factor of $\log(N)$.
\end{remark}

\subsubsection{Computational complexity}

We can derive the following bounds on the computational complexity depending on $\rho$ and $N$.

\begin{theorem}[Informal]
\label{thm:complexity}
    Under mild assumptions on $\{x_i\}_{i \in \I} \subset \Reals^{d}$, the KL-minimizer $L$ is computed in complexity $CN\rho^{d}$ in space and $CN\rho^{3d}$ in time when using \cref{alg:notAggregated} with $S_{\prec, \ell, \rho}$
    and in complexity $CN\rho^{d}$ in space and $C_{\lambda, \ell}CN\rho^{2d}$ in time when using \cref{alg:aggregated} with $\tilde{S}_{\prec, \ell, \rho, \lambda}$.
    Here, the constant $C$ depends only on $d$, $\lambda$, and the cost of evaluating entries of $\KM$.
\end{theorem}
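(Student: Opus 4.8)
The plan is to reduce the entire statement to cardinality estimates on the column supports $s_i$ and their aggregated counterparts $s_{\tilde{k}}$, since \cref{thm:repL} already expresses the cost of \cref{alg:notAggregated} solely through $\#S = \sum_i \#s_i$, $\max_i \#s_i$, and $\sum_i (\#s_i)^3$. Concretely, the first and central step is a \emph{packing lemma}: I claim that under the intrinsic-dimension hypothesis (\cref{cond:intrinsicDimension}) every column satisfies $\#s_i = \O(\rho^{\tilde{d}}) = \O(\rho^{d})$. The geometric mechanism is the reverse-maximin construction. After the relabeling implicit in $S_{\prec,\ell,\rho}$, the support of the column attached to a point $x_i$ of length scale $\ell_i$ consists of points of length scale at least $\ell_i$ lying in the ball $B(x_i,\rho \ell_i)$; by the defining property of the maximin ordering, the set of all points of length scale $\ge \ell_i$ is $\ell_i$-separated (when such a point was selected, every earlier point was already at distance $\ge \ell_i$). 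Thus the support is an $\ell_i$-separated subset of a ball of radius $\rho \ell_i$, and a standard volume/doubling argument under \cref{cond:intrinsicDimension} bounds its cardinality by $\O(\rho^{\tilde{d}})$.

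Given the packing lemma, the non-aggregated bounds are immediate: $\#S = \sum_i \#s_i = \O(N \rho^d)$ yields the $\O(N\rho^d)$ space bound (the $(\max_i \#s_i)^2 = \O(\rho^{2d})$ term of \cref{thm:repL} being lower order once $N$ exceeds $\rho^{d}$), while $\sum_i (\#s_i)^3 = \O(N \rho^{3d})$ yields the time bound. The cost of assembling the relevant entries of $\KM$ is subsumed, since there are $\O(N\rho^d)$ of them, each evaluated at unit cost (absorbed into $C$); the cost of building $\prec$ and $S_{\prec,\ell,\rho}$ is treated separately in \cref{apsec:sortSparse}.

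For \cref{alg:aggregated} the analysis is amortized over supernodes, and the key refinement is to show that aggregating $\O(\rho^d)$ neighboring columns does \emph{not} inflate the joint support to $\O(\rho^{2d})$. For a supernode $\tilde{k}$ with representative $x_i$, all parents $k \leadsto \tilde{k}$ have comparable length scale $\ell_k \in [\ell_i, \lambda \ell_i]$ and lie within $\O(\rho \ell_i)$ of $x_i$, so their combined support $s_{\tilde{k}} = \bigcup_{k \leadsto \tilde{k}} s_k$ is contained in a single ball $B(x_i, \O_{\lambda}(\rho \ell_i))$ and, being a subset of the $\ell_i$-separated set of points of length scale $\ge \ell_i$, again has cardinality $\O(\rho^d)$ by the same packing estimate (the factor $\lambda$ only enlarges the radius by a constant, contributing to $C_{\lambda,\ell}$). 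Assuming the point set is well behaved enough that there are $\O(N \rho^{-d})$ supernodes, each with $\O(\rho^d)$ parents, the cost is then one Cholesky factorization of the $\O(\rho^d) \times \O(\rho^d)$ matrix $\KM_{s_{\tilde{k}}, s_{\tilde{k}}}$ per supernode, i.e.\ $\O(N \rho^{-d}) \cdot \O(\rho^{3d}) = \O(N \rho^{2d})$, plus one triangular solve of size $\O(\rho^d)$ per parent, i.e.\ $N \cdot \O(\rho^{2d}) = \O(N \rho^{2d})$. Only one supernode's factor, of size $\O(\rho^{2d})$, is held in memory at a time, so the space bound remains $\O(N\rho^d)$, dominated by storing $L$.

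I expect the main obstacle to be the two geometric counting statements that underpin the amortization: first, verifying that the joint support $\#s_{\tilde{k}}$ stays $\O(\rho^d)$ rather than $\O(\rho^{2d})$, which hinges on showing that all parents and their neighbors live at a common scale inside a ball of radius $\O_{\lambda}(\rho \ell_i)$ and that points of length scale $\ge \ell_i$ form an $\ell_i$-separated set; and second, making precise the ``well-behaved'' assumption needed to guarantee $\O(N\rho^{-d})$ supernodes each absorbing $\Theta(\rho^d)$ parents, which is exactly where the regularity of the distribution of the $\ell_i$ enters and produces the constant $C_{\lambda,\ell}$. The packing lemma itself is routine once the maximin separation and \cref{cond:intrinsicDimension} are in hand, so the real work is bookkeeping the supernode geometry and controlling these constants uniformly in $\rho$ and $N$.
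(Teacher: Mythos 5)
Your packing lemma and the analysis of \cref{alg:notAggregated} coincide with the paper's own proof: the reverse $r$-maximin property makes the set $\{x_j : j \succeq i\}$ an $(r\ell_i)$-separated set, its intersection with the ball $B(x_i,\rho\ell_i)$ has cardinality at most $C_{\tilde{d}}(\rho/r)^{\tilde{d}}$ by \cref{cond:intrinsicDimension}, and the $\O(N\rho^{d})$ space and $\O(N\rho^{3d})$ time bounds follow exactly as you state. Your bound $\#s_{\tilde{k}} = \O(\rho^{d})$ for the aggregated supports (radius inflated only by a factor $1+\lambda$, separation unchanged) is also the same argument the paper invokes.

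The gap is in the aggregated time bound. You write ``assuming the point set is well behaved enough that there are $\O(N\rho^{-d})$ supernodes, each with $\O(\rho^d)$ parents,'' and defer this as an obstacle. But that assumption is precisely what the theorem's ``mild assumptions'' clause requires you to \emph{derive}: the formal hypothesis is \cref{cond:regref}, a statement about the distribution of the length scales $\ell_i$, which says nothing directly about supernode counts, and converting it into the needed amortization is the only genuinely new ingredient beyond the packing lemma. The paper's argument runs as follows. First, if two distinct supernodes have roots at comparable scales, $c \leq \ell_{\sqrt{s}}, \ell_{\sqrt{t}} \leq \lambda c$, then $\dist(x_{\sqrt{s}}, x_{\sqrt{t}}) \geq c\rho$, since otherwise one root would have been aggregated into the other's supernode. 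Second, by \cref{cond:intrinsicDimension}, a fixed index can therefore be a child of at most $C_{\tilde{d}}\bigl((1+\lambda)\lambda\bigr)^{\tilde{d}}$ supernodes whose roots lie in any one scale band $[\lambda^{k}, \lambda^{k+1})$; crucially this count is independent of $\rho$, because both the containment radius and the root separation scale linearly in $\rho$. Third, since a child of a supernode rooted at scale $\lambda^{k}$ has $\ell_i \geq \lambda^{k}$, summing over scale bands and invoking \cref{cond:regref} gives $\sum_{\tilde{k}} \#s_{\tilde{k}} \leq C_{\lambda,\ell} C N$. Only after combining this total bound with the per-supernode cap $\#s_{\tilde{k}} \leq C\rho^{\tilde{d}}$ does one conclude that the worst case for the cost of the factorizations and triangular solves is the configuration you assumed, namely $\sim N/\rho^{\tilde{d}}$ supernodes of full size, yielding $C_{\lambda,\ell}CN\rho^{2\tilde{d}}$ time. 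Note also that your formulation ``each absorbing $\Theta(\rho^d)$ parents'' is stronger than needed and not generally true: nothing prevents supernodes from having very few parents. What matters is only the \emph{upper} bound on the total number of child--supernode incidences, and the worst-case argument then handles uneven configurations automatically.
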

A more formal statement and a proof of \cref{thm:complexity} can be found in \cref{apsec:proofs}.

As can be seen from \cref{thm:complexity}, using the aggregation scheme decreases the computational cost by a factor $\rho^d$. 
This is because each supernode has $\approx \rho^d$ members that can all be updated by reusing the same Cholesky factorization.

\begin{remark}
	As described in \cref{apsec:proofs}, the computational complexity only depends on the intrinsic dimension of the dataset (as opposed to the potentially much larger ambient dimension $d$). 
	This means that the algorithm automatically exploits low-dimensional structure in the data to decrease the computational complexity. 
\end{remark}

\subsubsection{Approximation error}
\label{sssec:appproxerr}

We derive rigorous bounds on the approximation error from results on the localization of stiffness matrices of \emph{gamblets} (a class of operator-adapted wavelets) proved by \cite{owhadi2017universal, OwhScobook2018}, and their interpretation as Cholesky factors introduced by \cite{Schafer2017}.
Thus, the bounds hold in the setting of the above references. 
We assume for the purpose of this section that $\Omega$ is a bounded domain of $\mathbb{R}^d$ with Lipschitz boundary, and for an 
integer $s > d/2$, we write  $H_0^{s}\left(\Omega\right)$ for usual Sobolev the space of functions with zero Dirichlet boundary values and order $s$ derivatives in $L^2$, and $H_0^{-s}\left(\Omega\right)$ for its dual. 
Let the operator
\begin{equation}
  \IK: H_0^{s}\left(\Omega\right) \mapsto H^{-s}\left(\Omega\right), 
\end{equation}
be linear, symmetric ($\int u \IK v= \int v \IK u$), positive ($\int u \IK u\geq 0$), bijective, bounded (write $\|\IK\|:=\sup_u \|\IK u\|_{H^{-s}(\Omega)}/\|u\|_{H^s_0(\Omega)}$ for its operator norm), and local in the sense that $\int u \IK v \dx = 0$, for all 
$u,v \in H^s_0\left(\Omega\right)$ with disjoint support.
By the Sobolev embedding theorem, we have $H_0^{s}\left(\Omega\right) \subset  C_0\left( \Omega \right)$ and hence 
$\left\{ \updelta_x \right\}_{x \in \Omega} \subset H^{-s}\left(\Omega\right)$.
We then define $\K$ as the Green's function of $\IK$,
\begin{equation}
  \K\left(x_1,x_2\right) \defeq \int \updelta_{x_1} \IK^{-1} \updelta_{x_2} \dx.
\end{equation}
A simple example when $d=1$ and $\Omega = (0,1)$, is $\IK  = - \Delta $, and 
$\K(x,y) = \mathbbm{1}_{x<y} \frac{1 - y}{1 -x} + \mathbbm{1}_{y\leq x} \frac{y}{x} $.
Let us define the following measure of \emph{homogeneity} of 
the distribution of $\{x_i\}_{i \in \I}$,
\begin{equation}
\delta \defeq \frac{\min_{x_i,x_j \in \I}\, \dist(x_i, \{x_j\} \cup \partial \Omega)}
{\max_{x \in \Omega}\, \dist(x, \{x_i\}_{i \in \I} \cup \partial \Omega)}.
\label{eq:deltadef}
\end{equation}
Using the above definitions, we can rigorously quantify the 
increase in approximation accuracy as $\rho$ increases.
\begin{theorem}
\label{thm:accuracy}
There exists a constant $C$ depending only on $d$, $\Omega$, $\lambda$, $s$, $\|\IK\|$, $\|\IK^{-1}\|$, and $\delta$, such that  for $\rho \geq C \log(N/\epsilon)$, we have 
\begin{equation}
\textstyle    \KL*{\N\left(0,\KM\right)}{ \N(0, \left(L^\rho L^{\rho,\top})^{-1}\right)}  
   \, + \, \left\| \KM - ( L^{\rho} L^{\rho,\top} )^{-1} \right\|_{\FRO}
   \, \leq  \,\epsilon
\end{equation}
Thus, \cref{alg:notAggregated} computes an $\epsilon$-accurate approximation of $\KM$ in computational complexity $CN\log^d( N/\epsilon )$ in space and $CN \log^{3d}( N/\epsilon )$ in time, from  $CN\log^d( N/\epsilon )$ entries of $\KM$.
Similarly, \cref{alg:aggregated} computes an $\epsilon$-accurate approximation of $\KM$ in computational complexity $CN\log^d( N/\epsilon )$ in space and $CN \log^{2d}( N/\epsilon )$  in time, from $CN\log^d( N/\epsilon )$ entries of $\KM$.
\end{theorem}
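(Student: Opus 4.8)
The plan is to exploit the KL-optimality of $L^{\rho}$ established in \cref{thm:repL}: since $L^{\rho}$ minimizes $\KL*{\N(0,\KM)}{\N(0,(\hat L \hat L^{\top})^{-1})}$ over \emph{all} lower-triangular factors $\hat L$ supported on $S_{\prec,\ell,\rho}$, it suffices to exhibit a single competitor $\tilde L$ with that sparsity pattern whose KL divergence is at most $\epsilon$; optimality then transfers the bound to $L^{\rho}$ for free. The natural competitor is the truncation to $S_{\prec,\ell,\rho}$ of the \emph{exact} inverse-Cholesky factor $L^{\star}$ of $\KM$, i.e.\ the lower-triangular (in $\prec$) factor with $\KM^{-1} = L^{\star} L^{\star,\top}$, equivalently $\KM = (L^{\star} L^{\star,\top})^{-1}$. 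When $\tilde L = L^{\star}$ the approximation is exact and both error terms vanish, so the entire error is driven by the truncation, i.e.\ by the size of the discarded entries of $L^{\star}$.

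The key input is the exponential decay of the entries of $L^{\star}$. Because $\K$ is the Green's function of a local, symmetric, positive, bounded, bijective operator $\IK : H^s_0(\Omega) \to H^{-s}(\Omega)$, the localization results for gamblet stiffness matrices of \cite{owhadi2017universal, OwhScobook2018}, together with their interpretation as Cholesky factors in \cite{Schafer2017}, furnish an estimate of the form $|L^{\star}_{ij}| \leq C \exp\!\big(-\dist(x_i,x_j)/(C\ell_j)\big)$, with $C$ of the stated dependence on $d$, $\Omega$, $s$, $\|\IK\|$, $\|\IK^{-1}\|$, and the homogeneity measure $\delta$. I would first cite and adapt this decay bound to the reverse-maximin ordering, noting that $S_{\prec,\ell,\rho}$ retains exactly the entries with $\dist(x_i,x_j) \leq \rho\ell_j$, so each discarded entry has magnitude $\lesssim \exp(-\rho/C)$. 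Summing the geometric tail against the polynomially-growing number of points in each distance shell, the truncation error $E \defeq \tilde L - L^{\star}$ satisfies $\|E\| \leq C N \exp(-\rho/C)$ in the relevant norm, the polynomial-in-$\rho$ prefactors being absorbed into an enlarged $C$.

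Next I would convert this truncation estimate into the two claimed error bounds. Expanding \eqref{eqn:KLnormal} with $\KM_1 = \KM$ and $\KM_2 = (\tilde L \tilde L^{\top})^{-1}$, and using $\KM = (L^{\star} L^{\star,\top})^{-1}$, the KL divergence is a smooth function of $E$ that vanishes to second order at $E = 0$; a Taylor/perturbation expansion controlled by $\|E\|$, $\|L^{\star}\|$, and $\|(L^{\star})^{-1}\|$ (all bounded in terms of $\|\IK\|$, $\|\IK^{-1}\|$, and $\delta$) yields $\KL*{\N(0,\KM)}{\N(0,(\tilde L \tilde L^{\top})^{-1})} \leq C N \exp(-\rho/C)$. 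The Frobenius term is handled by the same expansion of $\KM - (\tilde L \tilde L^{\top})^{-1} = (L^{\star} L^{\star,\top})^{-1} - (\tilde L \tilde L^{\top})^{-1}$ in $E$. Choosing $\rho \geq C\log(N/\epsilon)$ (with $C$ enlarged once more) forces both terms below $\epsilon/2$, giving the displayed inequality; the complexity statements then follow by substituting this value of $\rho$ into \cref{thm:complexity}.

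The main obstacle is the decay estimate for $L^{\star}$: the gamblet localization results are proved in a multiresolution/variational framework, and the technical heart of the argument is to translate them faithfully into pointwise exponential bounds on the inverse-Cholesky entries in the reverse-maximin ordering, carefully tracking the dependence of the constants on $\delta$, $\|\IK\|$, and $\|\IK^{-1}\|$. A secondary subtlety is ensuring that the principal submatrices $\KM_{s_i,s_i}$ (hence $L^{\star}$ and its truncation) remain uniformly well-conditioned as $N \to \infty$, so that the perturbation expansions of both the KL divergence and the Frobenius term hold with constants independent of $N$.
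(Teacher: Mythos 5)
Your strategy is essentially the paper's: the competitor is the truncation $\tilde L$ (the paper's $L^S$) of the exact Cholesky factor of $\KM^{-1}$ to the pattern $S_{\prec,\ell,\rho}$; the exponential-decay input is the gamblet localization theory of \cite{owhadi2017universal,OwhScobook2018} as packaged by \cite{Schafer2017} (the paper simply cites this as \cref{thm:accIchol}, so no re-derivation of entrywise decay is needed); KL-optimality from \cref{thm:repL} transfers the competitor's KL bound to $L^{\rho}$; and the complexity claims follow by substituting $\rho \approx \log(N/\epsilon)$ into \cref{thm:complexity}. Your ``secondary subtlety'' about uniform-in-$N$ conditioning is not actually needed: the paper lets $\lambda_{\max}(\KM)$ and $\lambda_{\min}(\KM)^{-1}$ grow polynomially in $N$ (citing [Thm.~3.16] of \cite{Schafer2017}) and absorbs these factors into the constant $C$ in $\rho \geq C\log(N/\epsilon)$, exactly as you absorb the factor $N$ from the tail sum.

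The genuine gap is in the Frobenius half of the claim. The theorem bounds $\left\| \KM - (L^{\rho}L^{\rho,\top})^{-1} \right\|_{\FRO}$, the error of the \emph{KL-optimal} factor, but your third paragraph only bounds $\left\| \KM - (\tilde L\tilde L^{\top})^{-1} \right\|_{\FRO}$, the error of the \emph{competitor}. KL-optimality transfers nothing but the KL bound: it gives $\KL*{\N(0,\KM)}{\N(0,(L^{\rho}L^{\rho,\top})^{-1})} \leq \KL*{\N(0,\KM)}{\N(0,(\tilde L\tilde L^{\top})^{-1})}$, and since $L^{\rho} \neq \tilde L$ in general, no comparison between their Frobenius errors follows from this. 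What is missing is the \emph{reverse} conversion, applied to $L^{\rho}$ itself: if the KL divergence of $L^{\rho}$ is small, then its Frobenius error is controlled. This is precisely the second implication of the paper's \cref{lem:compKLFRO}: if $\KL*{\N(0,\KM)}{\N(0,(MM^{\top})^{-1})} \leq C$, then $\left\| \KM^{-1} - MM^{\top} \right\|_{\FRO} \leq \lambda_{\min}^{-1}\, \KL*{\N(0,\KM)}{\N(0,(MM^{\top})^{-1})}$, with $\lambda_{\min}^{-1}$ again polynomial in $N$. The complete chain is: Frobenius bound for $\tilde L$ (decay) $\Rightarrow$ KL bound for $\tilde L$ (forward direction of the lemma) $\Rightarrow$ KL bound for $L^{\rho}$ (optimality) $\Rightarrow$ Frobenius bound for $L^{\rho}$ (reverse direction of the lemma). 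Your Taylor argument --- the second-order vanishing of the KL divergence at $E=0$, i.e., the comparison of $\phi_{\operatorname{KL}}(x) = (x-\log(1+x))/2$ with $x^2$ near zero --- is exactly what proves such a two-sided lemma, so the repair fits naturally into your framework; but as written, your proposal never establishes the Frobenius bound for $L^{\rho}$, only for $\tilde L$.
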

To the best of our knowledge, the above result is the best known complexity/accuracy trade-off for kernel matrices based on Green's functions of elliptic boundary value problems.
Some related but slower or less practically useful approaches were presented in 
\cite{Schafer2017}, who showed that the Cholesky factors of $\KM$ (as opposed to those of $\KM^{-1}$) can be approximated in computational complexity $\O(N \log^2(N) \log^{2 d}(N/\epsilon))$ in time and $\O(N \log(N)\log^{d}(N/\epsilon))$ in space using zero-fill-in incomplete Cholesky factorization (\cref{alg:ichol}) applied to $\KM$. 
Similarly, they showed that the Cholesky factors of $\KM^{-1}$ can be approximated in computational complexity $\O(N  \log^{2 d}(N/\epsilon))$ in time and $\O(N \log^{d}(N/\epsilon))$ in space using zero-fill-in incomplete Cholesky factorization applied to $\KM^{-1}$.
While they also observed that the near-sparsity of the Cholesky factors of $\KM^{-1}$ implies that they can in principle can be computed in computational complexity $\O(N  \log^{2 d}(N/\epsilon))$ from entries of $\KM$ by a recursive algorithm (thus improving the complexity of inverting $\KM$), they did not provide an explicit algorithm for this purpose.
Indeed, we have found that recursive algorithms based on truncation are unstable to the point of being useless in practice, when used to compute the Cholesky factors of $\KM^{-1}$ from entries of $\KM$.

\subsubsection{Screening in theory and practice} 
The theory described in the last section covers any self-adjoined operator $\IK$ with an associated quadratic form 
\begin{equation*}
	\IK[u] \coloneqq \int_{\Omega} u \IK u \dx = \sum_{k = 0}^s \int \sigma^{(k)}(x) \|D^{(k)}u(x)\|^2 \dx
\end{equation*}
and $\sigma^{(s)} \in L^{2}(\Omega)$ positive almost everywhere.
That is, $\IK[u]$ is a weighted average of the squared norms of derivatives of $u$ and thus measures the roughness of $u$.
A Gaussian process with covariance function given by $\K$ has density $\sim \exp(-\IK[u]/2)$ and therefore assigns exponentially low probability to ``rough'' functions, making it a prototypical smoothness prior. 
\cite{Schafer2017} prove that these Gaussian processes are subject to an exponentially strong screening effect in the sense that, after conditioning a set of $\ell$-dense points, the conditional covariance of a given point decays exponentially with rate $\sim \ell^{-1}$, as shown in the first panel of \cref{fig:screening}.
The most closely related model in common use is the Mat{\'e}rn covariance function \cite{matern1960spatial} that is the Green's function of an elliptic PDE of order $s$, when choosing the ``smoothness parameter'' $\nu$ as $\nu = s - d/2$.  
While our theory only covers $s \in \Naturals$, \cite{Schafer2017} observe that Mat{\'e}rn kernels with non-integer values of $s$ and even the ``Cauchy class'' \cite{gneiting2004stochastic} seem to be subject to similar behavior.
In the second panel of \cref{fig:screening}, we show that as the distribution of conditioning points becomes more irregular, the screening effect weakens.
In our theoretical results, this is controlled by the upper bound on $\delta$ in \cref{eq:deltadef}.
The screening effect is significantly weakened close to the boundary of the domain, as illustrated in the third panel of \cref{fig:screening} (see also \cite[Section 4.2]{Schafer2017}). 
This is the reason why our theoretical results, different from the Mat{\'e}rn covariance, are restricted to Green's functions with zero Dirichlet boundary condition, which corresponds to conditioning the process to be zero on $\partial \Omega$.
A final limitation is that the screening effect weakens as we take the order of smoothness to infinity, obtaining, for instance the Gaussian kernel.
However, as described in \cite[Section 2.4]{Schafer2017}, this results in matrices that have efficient low-rank approximations instead. 

\begin{figure}
	\centering
	\includegraphics[width=0.3\textwidth]{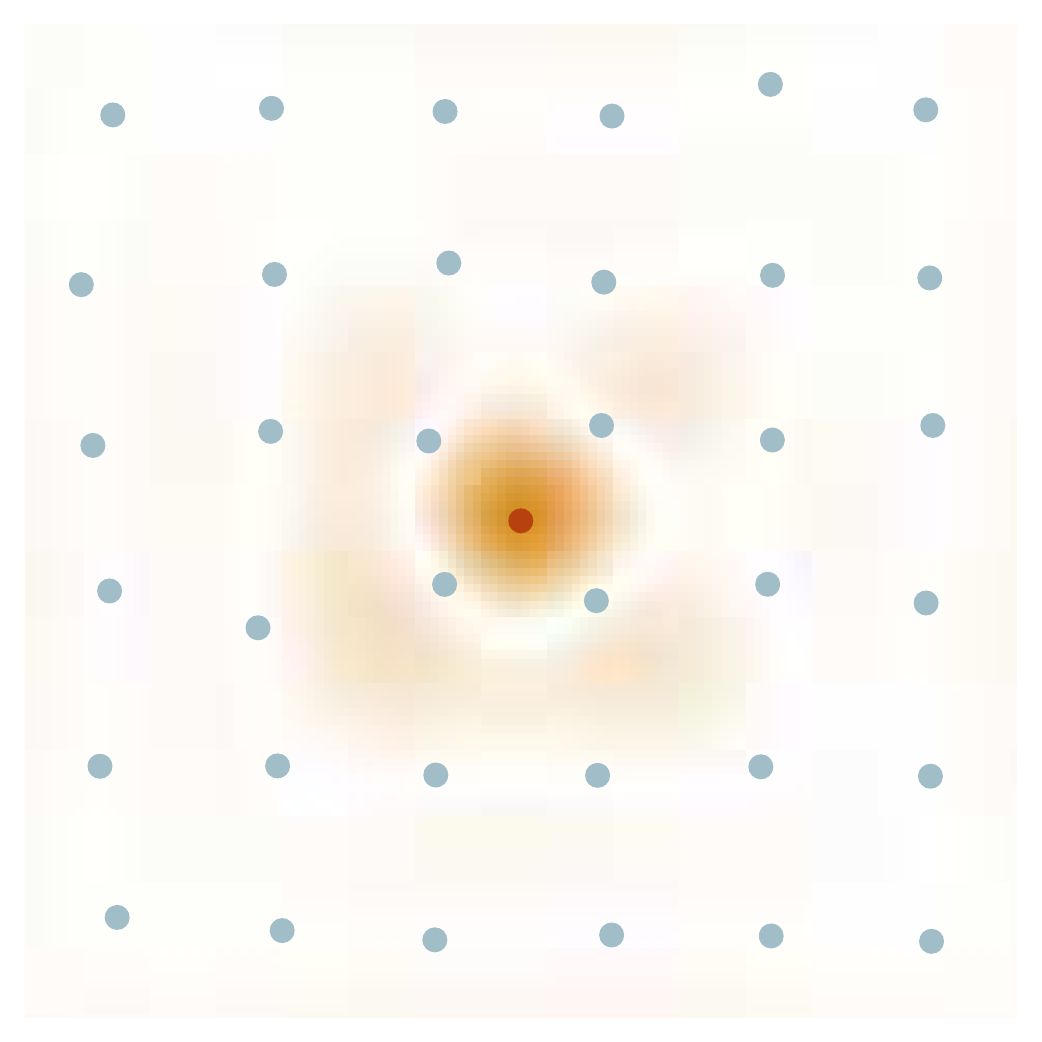}
	\includegraphics[width=0.3\textwidth]{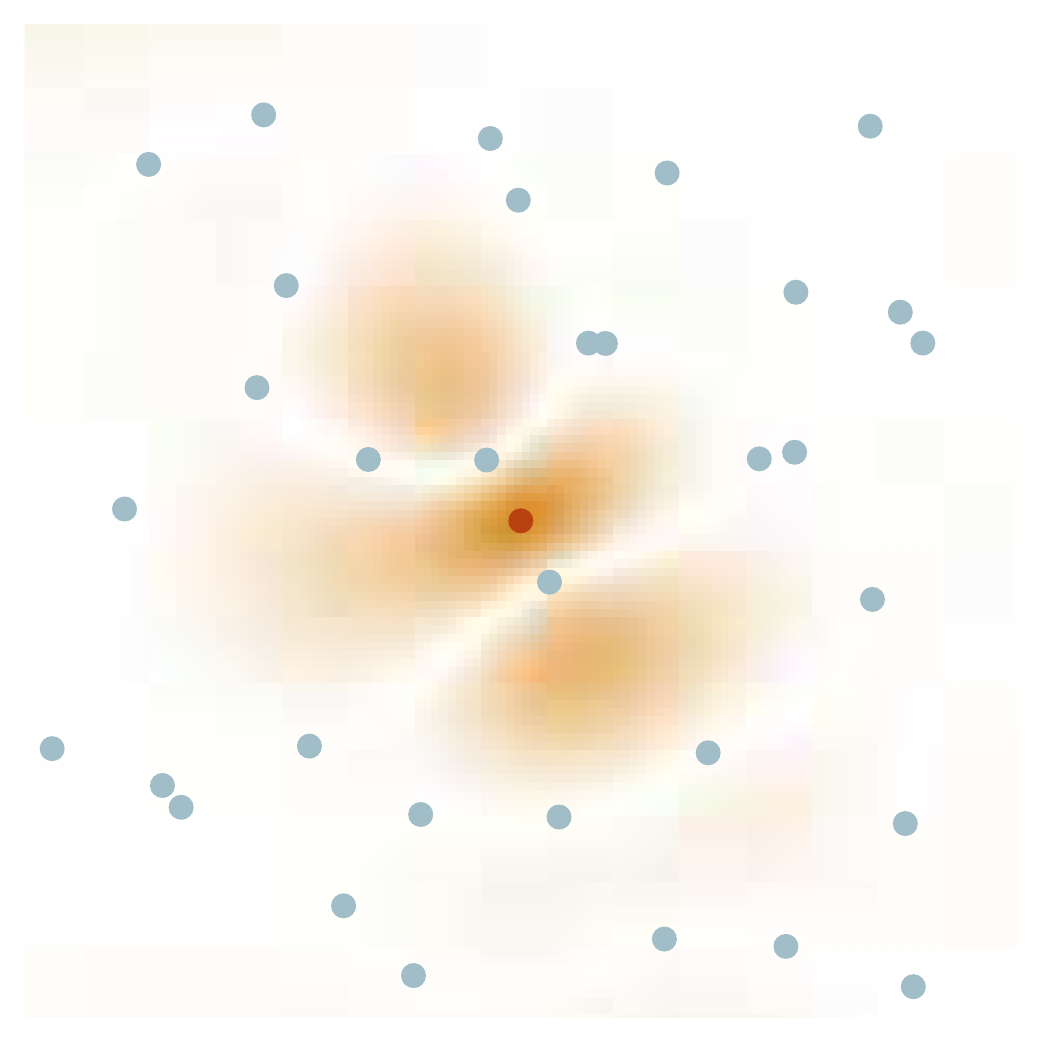}
	\includegraphics[width=0.3\textwidth]{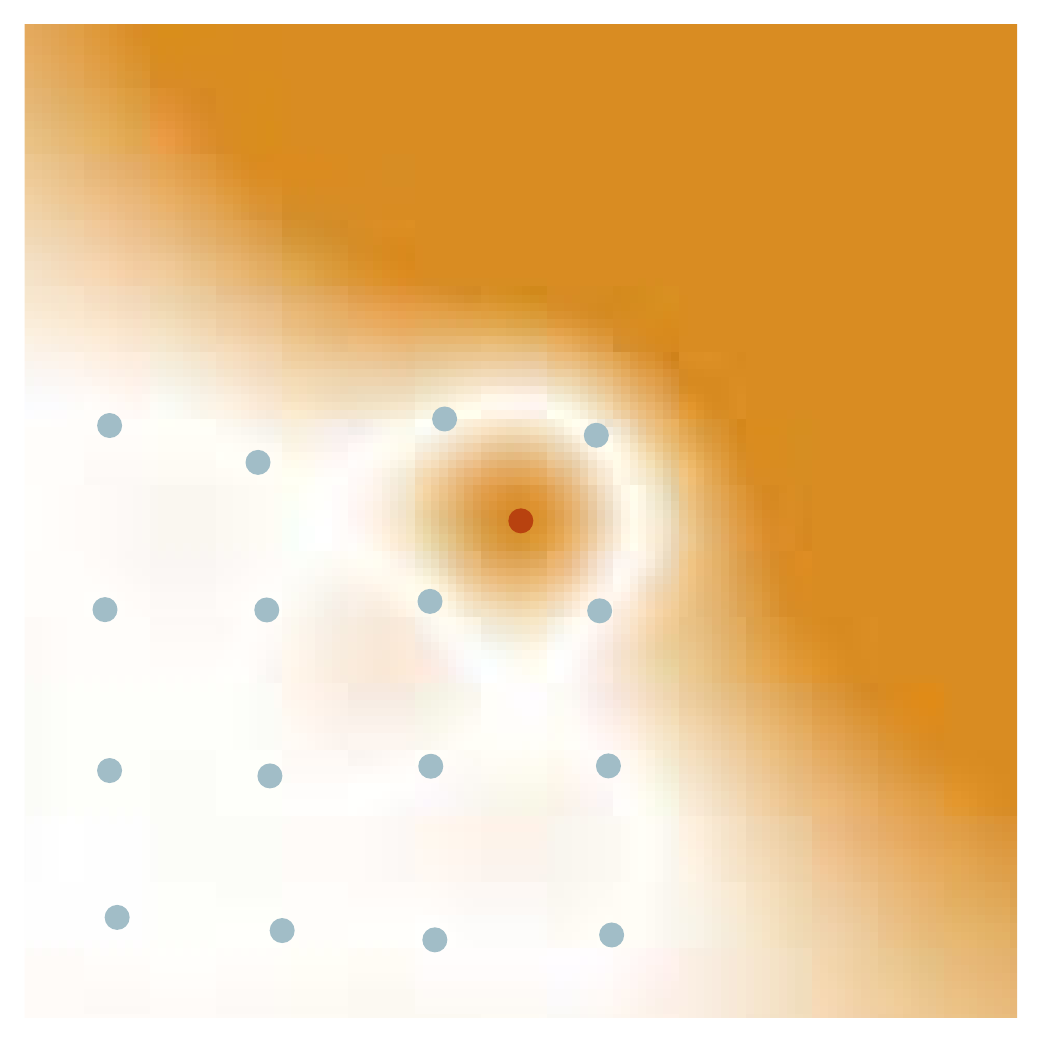}
	\caption{To illustrate the screening effect exploited by our methods, we plot the {\color{darkorange}conditional correlation} with {\color{darkrust} the point in red} conditional on {\color{darklightblue} the blue points}. In the first panel, the points are evenly distributed, leading to a rapidly decreasing conditional correlation. 
	In the second panel, the same number of points is irregularly distributed, slowing the decay.
	In the last panel, we are at the fringe of the set of observations, weakening the screening effect. \label{fig:screening}}
\end{figure}

\section{Extensions\label{sec:extensions}}

We now present extensions of our method that improve its performance in practice.
In \cref{ssec:noise}, we show how to improve the approximation when $\KM$ is replaced by $\KM + R$, for $R$ diagonal, as is frequently the case in statistical inference where $R$ is the covariance matrix of additive, independent noise. 
In \cref{sec:prediction}, we show how including the prediction points can improve the computational complexity (\cref{sssec:predfirst}) or accuracy (\cref{sssec:predlast}) of the posterior mean and covariance.
In \cref{ssec:lowmemory}, we discuss memory savings and parallel computation for GP inference when we are only interested in computing the likelihood and the posterior mean and covariance (as opposed to, for example, sampling from $\N(0,\KM)$ or computing products $v \mapsto \KM v$).

We note that it is not possible to combine the variant in \cref{ssec:noise} with that in \cref{ssec:lowmemory}, and that the combination of the variants in \cref{ssec:noise,sssec:predlast} might diminish  accuracy gains from the latter.
Furthermore, while \cref{ssec:lowmemory} can be combined with \cref{sssec:predfirst} to compute the posterior mean, this combination cannot be used to compute the full posterior covariance matrix. 

\subsection{Additive noise}
\label{ssec:noise}

Assume that a diagonal noise term is added to $\KM$, so that $\Sigma = \KM + R$, where $R$ is diagonal.
Extending the Vecchia approximation to this setting has been a major open problem  in  spatial statistics    \cite{Datta2016,Katzfuss2017a,Katzfuss2018}. 
Applying our approximation directly to $\Sigma$  would not work well because the noise term attenuates the exponential decay. 
Instead, given the approximation $\hat\KM^{-1}=L L^\top$ obtained using our method, we can write, following \cite{Schafer2017}: 
\[
\Sigma \approx \hat\KM + R = \hat\KM(R^{-1} + \hat\KM^{-1})R.
\]
Applying an incomplete Cholesky factorization with zero fill-in (\cref{alg:ichol}) to $R^{-1} + \hat\KM^{-1} \approx \tilde L \tilde L^\top$, we have
\[
\Sigma \approx (LL^\top)^{-1}\tilde L \tilde L^\top R.
\]
The resulting procedure, given in \cref{alg:withNoise}, has asymptotic complexity $\O(N \rho^{2d} )$, because every column of the triangular factors has at most $\O(\rho^d)$ entries.

Following the intuition that $\KM^{-1}$ is \emph{essentially} an elliptic partial differential operator, $\KM^{-1} + R^{-1}$ is \emph{essentially} a partial differential operator with an added zero-order term, and its Cholesky factors can thus be expected to satisfy an exponential decay property just as those of $\KM^{-1}$. 
Indeed, as observed by \cite[Fig.~2.3]{Schafer2017}, the exponential decay of the Cholesky factors of $R^{-1} + \KM^{-1}$ is as strong as for $\KM^{-1}$, even for large $R$.
We suspect that this could be proved rigorously by adapting the proof of exponential decay in \cite{OwhScobook2018} to the discrete setting.
We note that while independent noise is most commonly used, the above argument leads to an efficient algorithm whenever $R^{-1}$ is approximately given by an elliptic PDE (possibly of order zero).

For small $\rho$, the additional error introduced by the incomplete Cholesky factorization can harm accuracy, which is why we recommend using the conjugate gradient algorithm  (CG) to invert $(R^{-1} + \hat{\KM}^{-1})$ using $\tilde{L}$ as a preconditioner.
In our experience, CG converges to single precision in a small number of iterations ($\sim 10$).

\begin{figure}[t]
	\begin{minipage}[t]{6.3cm}
		\vspace{0pt}
		\begin{algorithm}[H]
		
			\textbf{Input:} $\K$, $\left\{x_i\right\}_{i \in \I}$, $\rho$, ($\lambda$,) and $R$\\
		    \textbf{Output:} $L, \tilde{L} \in \Reals^{N \times N}$ l.\ triang. in $\prec$\\
		    \begin{algorithmic}[1]
    			\STATE Comp. $\prec$ and $S \leftarrow S_{\prec,\ell,\rho}$($S_{\prec,\ell,\rho, \lambda}$)
    			\STATE Comp. $L$ using Alg.~\ref{alg:notAggregated}(\ref{alg:aggregated})
    			\FOR{$(i,j) \in S$}
    			    \STATE $A_{ij} \leftarrow \langle L_{i,:}, L_{j,:} \rangle$
    			\ENDFOR
    			\STATE $A \leftarrow A + R$
    			\STATE $\tilde{L} \leftarrow \texttt{ichol}(A,S)$
    			\RETURN $L$, $\tilde{L}$
    		\end{algorithmic}
			\caption{\label{alg:withNoise}Including independent noise with covariance matrix $R$ }
		\end{algorithm}
	\end{minipage}
	\begin{minipage}[t]{6.5cm}
		\vspace{0pt}
		\begin{algorithm}[H]
            \textbf{Input:} $A \in \Reals^{N \times N}$, $S$\\
			\textbf{Output:} {$L\in \Reals^{N \times N}$ l.\ triang. in $\prec$}\\
			\begin{algorithmic}[1]
			    \STATE $L \leftarrow (0,\ldots,0)(0,\ldots,0)^{\top}$
			    \FOR{$j \in \{ 1, \dots, N \}$}
			    	\FOR{$i \in \{ j, \dots, N \}$ : $(i, j) \in S$}
			    	    \STATE $L_{ij} \leftarrow A_{ij} - \langle L_{i,1:(j-1)}, L_{j,1:(j-1)} \rangle $
			    	\ENDFOR
			        \STATE $L_{:i} \gets A_{:i} / \sqrt{A_{ii}}$
			    \ENDFOR
			    \RETURN $L$
			\end{algorithmic}
			\caption{\label{alg:ichol} Zero fill-in incomplete Cholesky factorization ($\texttt{ichol(A,S)}$)}
		\end{algorithm}
	\end{minipage}
	\caption{Algorithms for approximating covariance matrices with added independent noise $\KM + R$ (left), using the zero fill-in incomplete Cholesky factorization (right). See \cref{ssec:noise}.}
\end{figure}

Alternatively, higher accuracy can be achieved by using the sparsity pattern of $L L^{\top}$ (as opposed to that of $L$) to compute the incomplete Cholesky factorization of $A$ in \cref{alg:withNoise}; in fact, in our numerical experiments in \cref{ssec:noisecomparison}, this approach was as accurate as using the exact Cholesky factorization of $A$ over a wide range of $\rho$ values and noise levels. The resulting algorithm still requires $\O(N \rho^{2d})$ time, albeit with a larger constant.
This is because for an entry $(i,j)$ to be part of the sparsity pattern of $L L^{\top}$, there needs to exist a $k$ such that both $(i,k)$ and $(j,k)$ are part of the sparsity pattern of $L$.
By the triangle inequality, this implies that $(i,j)$ is contained in the sparsity pattern of $L$ obtained by doubling $\rho$.
In conclusion, we believe that the above modifications allow us to compute an $\epsilon$--accurate factorization in $\O(N \log^{2d}(N/\epsilon))$ time and $\O(N \log^{d}(N/\epsilon))$ space, just as in the noiseless case.

\subsection{Including the prediction points}
\label{sec:prediction}

In GP regression, we are given $N_{\Train}$ points of training data and want to compute predictions at $N_{\Pred}$ points of test data.
We denote as $\KM_{\Train, \Train}$, $\KM_{\Pred, \Pred}$, $\KM_{\Train, \Pred}, \KM_{\Pred, \Train}$ the covariance matrix of the training data, the covariance matrix of the test data, and the covariance matrices of training and test data.
Together, they form the joint covariance matrix $\begin{psmallmatrix} \KM_{\Train, \Train} & \KM_{\Train, \Pred}  \\ \KM_{\Pred, \Train} & \KM_{\Pred, \Pred} \end{psmallmatrix}$ of training and test data.
In GP regression with training data $y \in \Reals^{N_{\Train}}$ we are interested in:
\begin{itemize}
    \item Computation of the log-likelihood $\sim y^{\top} \KM_{\Train, \Train}^{-1} y + \logdet \KM_{\Train, \Train} + N \log(2\pi)$
    \item Computation of the posterior mean $ y^{\top} \KM_{\Train, \Train}^{-1} \KM_{\Train,\Pred}$
    \item Computation of the posterior covariance $\KM_{\Pred,\Pred} - \KM_{\Pred,\Train} \KM_{\Train, \Train}^{-1} \KM_{\Train,\Pred}$
\end{itemize}
In the setting of \cref{thm:accuracy}, our method can be applied to accurately approximating the matrix $\KM_{\Train, \Train}$ in near-linear cost.
The training covariance matrix can then be replaced by the resulting approximation for all downstream applications.

However, approximating instead the joint covariance matrix of training and prediction variables improves (1)  stability and accuracy compared to computing the KL-optimal approximation of the training covariance alone,  (2)  computational complexity by circumventing the computation of most of the $N_{\Train} N_{\Pred}$ entries of the off-diagonal part $\KM_{\Train, \Pred}$ of the covariance matrix.

We can add the prediction points before or after the training points in the elimination ordering.

\subsubsection{Ordering the prediction points first, for rapid interpolation}
\label{sssec:predfirst}
 The computation of the mixed covariance matrix $\KM_{\Pred,\Train}$ can be prohibitively expensive when interpolating with a large number of prediction points.
This situation is  common in spatial statistics when estimating  a stochastic field throughout a large domain.
In this regime, we propose to order the $\left\{x_i\right\}_{i \in \I}$ by first computing the reverse maximin ordering $\prec_{\Train}$ of only the training points as described in \cref{ssec:maximin} using the original $\Omega$, writing $\ell_{\Train}$ for the corresponding length scales. 
We then compute the reverse maximin ordering $\prec_{\Pred}$ of the prediction points using the modified $\tilde{\Omega} \defeq \Omega \cup \left\{x_i\right\}_{i \in I_{\Train}}$, obtaining the length scales $\ell_{\Pred}$. 
Since $\tilde{\Omega}$ contains $\left\{x_i\right\}_{i \in I_{\Train}}$,
when computing the ordering of the prediction points, 
prediction points close to the training set will tend to have a smaller length-scale than in the naive application of the algorithm, and thus, the resulting sparsity pattern will have fewer nonzero entries.
We then order the prediction points before the training points and compute $S_{(\prec_{\Pred}, \prec_{\Train}),(\ell_{\Pred}, \ell_{\Train}),\rho}$ or $S_{(\prec_{\Pred}, \prec_{\Train}),(\ell_{\Pred}, \ell_{\Train}),\rho,\lambda}$ following the same procedure as in \cref{ssec:maximin,ssec:aggregated}, respectively.
The distance of each point in the prediction set to the training set can be computed in near-linear complexity using, for example, a minor variation of \cite[Alg.~3]{Schafer2017}.
Writing  $L$ for the resulting Cholesky factor of the joint precision matrix, we can approximate $\KM_{\Pred,\Pred} \approx L_{\Pred,\Pred}^{-\top} L_{\Pred,\Pred}^{-1}$ and $\KM_{\Pred,\Train} \approx  L_{\Pred,\Pred}^{-\top}L_{\Train,\Pred}^{\top}$ based on submatrices of $L$.
See \cref{apsssec:predfirst} and \cref{alg:predfirst} for additional details.
We note that the idea of ordering the prediction points first (last, in their notation) has already been proposed by \cite{Katzfuss2018} in the context of the Vecchia approximation, although without providing an explicit algorithm.

If one does not use the method in \cref{ssec:noise} to treat additive noise, then the method described in this section amounts to making each prediction using only $\O(\rho^d)$ nearby datapoints.
In the extreme case where we only have a single prediction point, this means that we are only using $\O(\rho^d)$ training values for prediction. 
On the one hand, this can lead to improved robustness of the resulting estimator, but on the other hand, it can lead to some training data being missed entirely.

\subsubsection{Ordering the prediction points last, for improved robustness}
\label{sssec:predlast}

If we want to use the improved stability of including the prediction points, maintain near-linear complexity, and use all $N_{\Train}$ training values for the prediction of even a single point, we have to include the prediction points \emph{after} the training points in the elimination ordering.
Naively, this would lead to a computational complexity of $\O(N_{\Train} (\rho^{d} + N_{\Pred})^2)$, which might be prohibitive for large values of $N_{\Pred}$.
If it is enough to compute the posterior covariance only among $m_{\Pred}$ small \emph{batches} of up to $n_{\Pred}$ predictions each (often, it makes sense to choose $n_{\Pred} = 1$), we can avoid this increase of complexity by performing prediction on groups of only $n_{\Pred}$ at once, with the computation for each batch only having computational complexity $\O(N_{\Train} (\rho^{d} + n_{\Pred})^2)$.
A naive implementation would still require us to perform this procedure $m_{\Pred}$ times, eliminating any gains due to the batched procedure.
However, careful use of the Sherman-Morrison-Woodbury matrix identity allows to to reuse the biggest part of the computation for each of the batches, thus reducing the computational cost for prediction and computation of the covariance matrix to only $\O(N_{\Train}( (\rho^{d} + n_{\Pred})^2 + (\rho^{d} + n_{\Pred}) m_{\Pred}))$.
This procedure is detailed in \cref{apsssec:predlast} and summarized in \cref{alg:predlast}.

\subsection{GP regression in \texorpdfstring{$\O(N + \rho^{2d})$}. space complexity}
\label{ssec:lowmemory}

When deploying direct methods for approximate inversion of kernel matrices, a major difficulty is the superlinear memory cost that they incur.
This, in particular, poses difficulties in a distributed setting or on graphics processing units.
In the following, $\I = \I_{\Train}$ denotes the indices of the training data, and we write $\KM \coloneqq \KM_{\Train, \Train}$, while $\I_{\Pred}$ denotes those of the test data.
In order to compute the log-likelihood, we need to compute the matrix-vector product $L^{\rho, \top}y$, as well as the diagonal entries of $L^{\rho}$. 
This can be done by computing the columns $L_{:,k}^{\rho}$ of $L^{\rho}$ individually using \cref{eqn:defcolL} and setting $(L^{\rho, \top}y)_k = (L_{:,k}^{\rho})^{\top} y$, $L^{\rho}_{kk} = (L_{:,k}^{\rho})_k$, without ever forming the matrix $L^{\rho}$.
Similarly, in order to compute the posterior mean, we only need to compute $\KM^{-1} y = L^{\rho,\top} L^{\rho}y$, which only requires us to compute each column of $L^{\rho}$ twice, without ever forming the entire matrix.
In order to compute the posterior covariance, we need to compute the matrix-matrix product $L^{\rho,\top} \KM_{\Train, \Pred}$, which again can be performed by computing each column of $L^{\rho}$ once without ever forming the entire matrix $L^{\rho}$. 
However, it does require us to know beforehand at which points we want to make predictions.
The submatrices $\KM_{s_i,s_i}$ for all $i$ belonging to the supernode $\tilde{k}$ (i.e., $i \leadsto \tilde{k}$) can be formed from a list of the elements of $\tilde{s}_k$.
Thus, the overall memory complexity of the resulting algorithm is $\O(\sum_{k \in \tilde{I}} \# \tilde{s}_k) = O(N_{\Train} + N_{\Pred} + \rho^{2d})$.
The above described procedure is implemented in \cref{alg:lowMemNotAggregated,alg:lowMemAggregated} in \cref{apssec:linearMemory}.
In a distributed setting with workers $W_1,W_2,\ldots$, this requires communicating only $\O(\# \tilde{s}_k)$ floating-point numbers to worker $W_k$, which then performs $\O((\# \tilde{s}_k)^3)$ floating-point operations; a naive implementation would require the communication of $\O((\# \tilde{s})^2)$ floating-point numbers to perform the same number of floating-point operations.

\section{Applications and numerical results}
\label{sec:numerics}

We conclude with numerical experiments studying the practical performance of our method.
The Julia code can be found under \url{https://github.com/f-t-s/cholesky\_by\_KL\_minimization}.

\subsection{Gaussian-process regression and aggregation}

\begin{figure}
    \centering
    \includegraphics[width=0.25\textwidth]{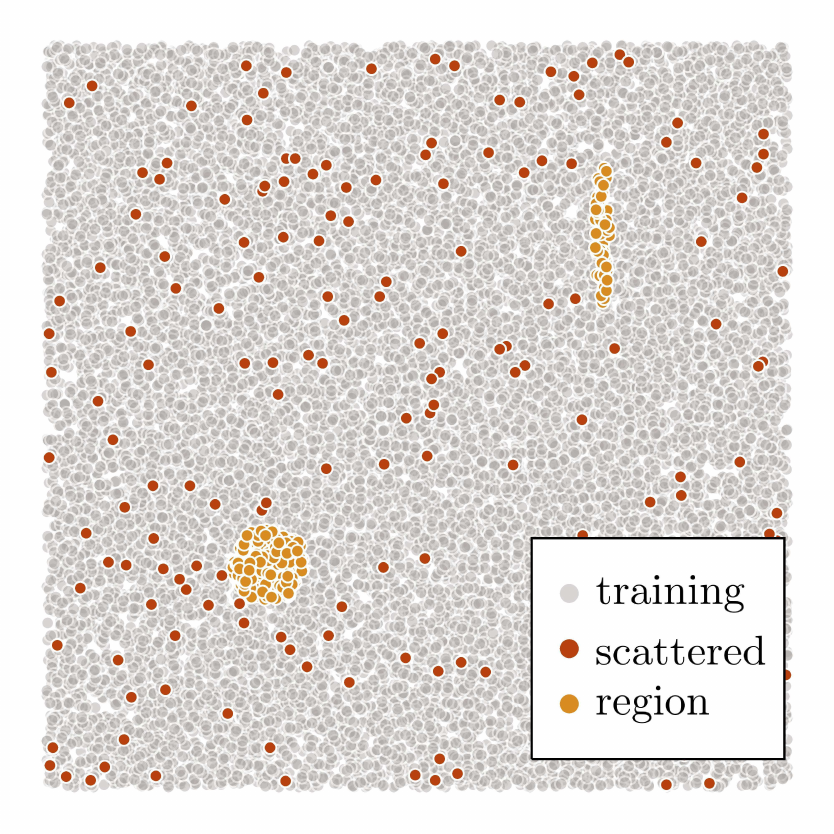}
    \includegraphics[width=0.35\textwidth]{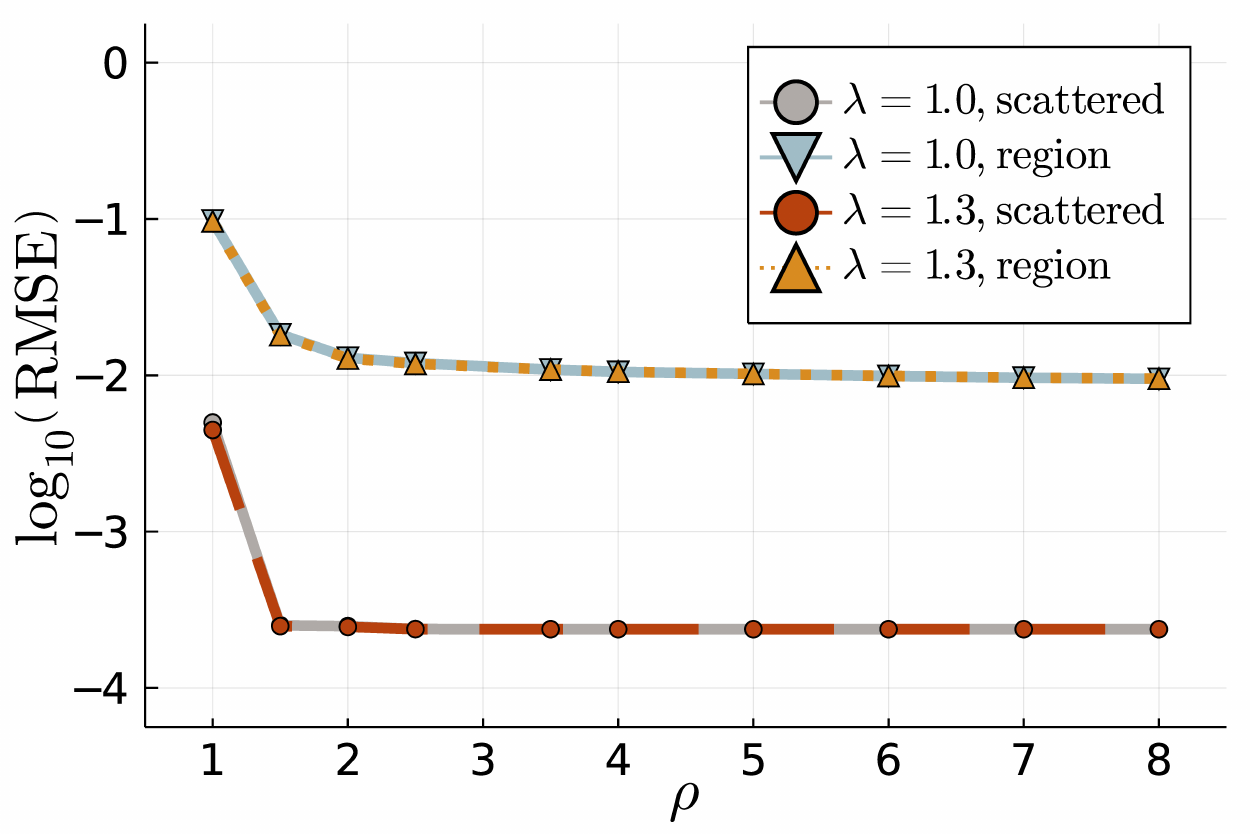}
    \includegraphics[width=0.35\textwidth]{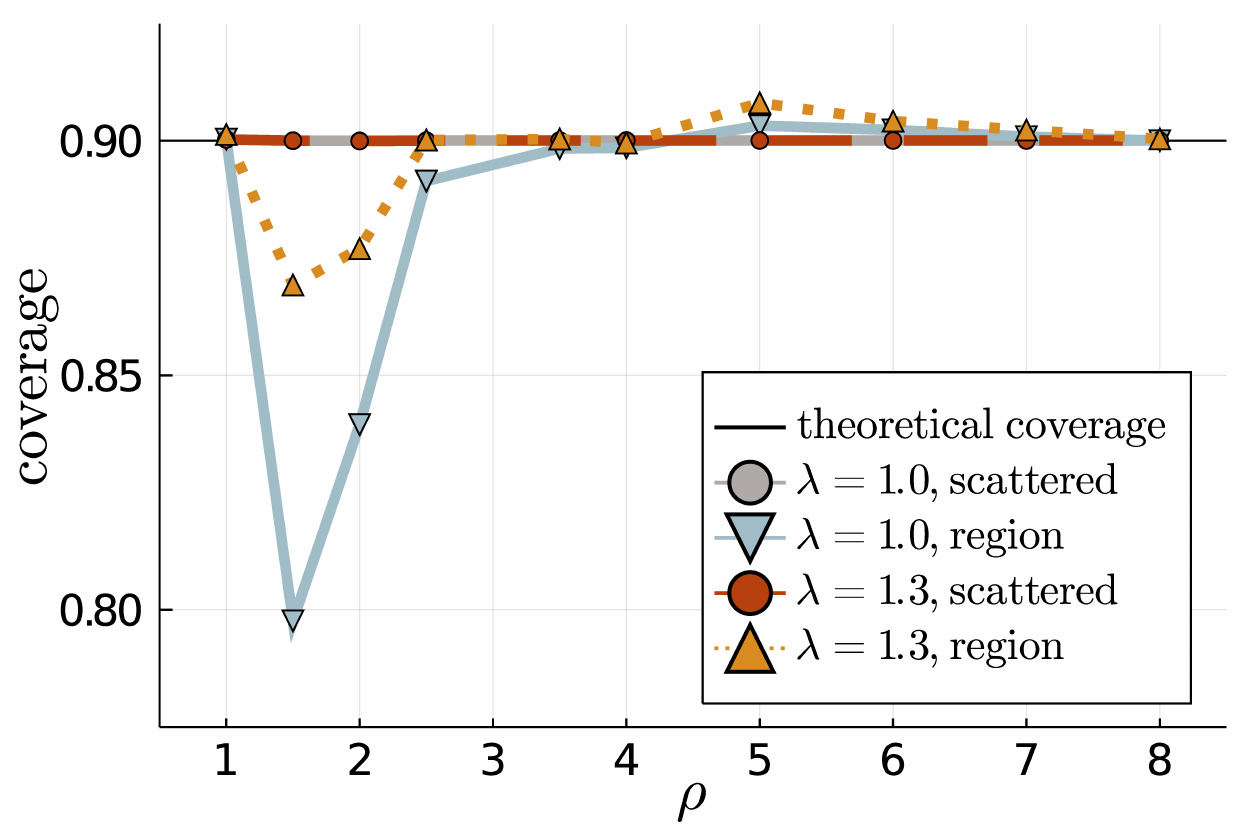}
    \caption{Accuracy of our approximation with and without aggregation for a Gaussian process with Mat{\'ern} covariance ($\nu = 3/2$) on a grid of size $10^6$ on the unit square. (Left:) Randomly sampled 2 percent of the training and prediction points. (Middle:) RMSE, averaged over prediction points and 1,000 realizations. (Right:) Empirical coverage of $90\%$ prediction intervals computed from the posterior covariance.}
    \label{fig:FFT}
\end{figure}

We begin our numerical experiments with two-dimensional ($d = 2$) synthetic data.
We use circulant embeddings \cite{stein2002fast,graham2018analysis}[\url{https://github.com/PieterjanRobbe/GaussianRandomFields.jl}] for the creation of $10^3$ samples of a Gaussian process with exponential covariance function at $10^6$ locations on a regular grid in $\Omega = [0,1]^2$. 
From these $10^6$ locations, we select $2 \times 10^4$ prediction points, and use the remaining points as training data.
As illustrated in \cref{fig:FFT} (left panel), half of the prediction points form two elliptic regions devoid of any training points (called ``region''), while the remaining prediction points are interspersed among the training points (called ``scattered'').
We then use the ``prediction points first'' approach of \cref{sssec:predfirst} and the aggregated sparsity pattern $\tilde{S}_{\prec,\ell,\rho,\lambda}$ of \cref{ssec:aggregated} with $\lambda \in  \left\{1.0, 1.3\right\}$, to compute the posterior distributions at the prediction points from the values at the training points.
In \cref{fig:FFT}, we report the RMSE of the posterior means, as well as the empirical coverage of the $90\%$ posterior intervals, averaged over all $10^3$ realizations, for a range of different $\rho$.
Note that while the RMSE between the aggregated ($\lambda = 1.3$) and non-aggregated ($\lambda = 1.0$) is almost the same, the coverage converges significantly faster to the correct value with $\lambda=1.3$.

\begin{figure}
    \centering
    \includegraphics[scale=0.145]{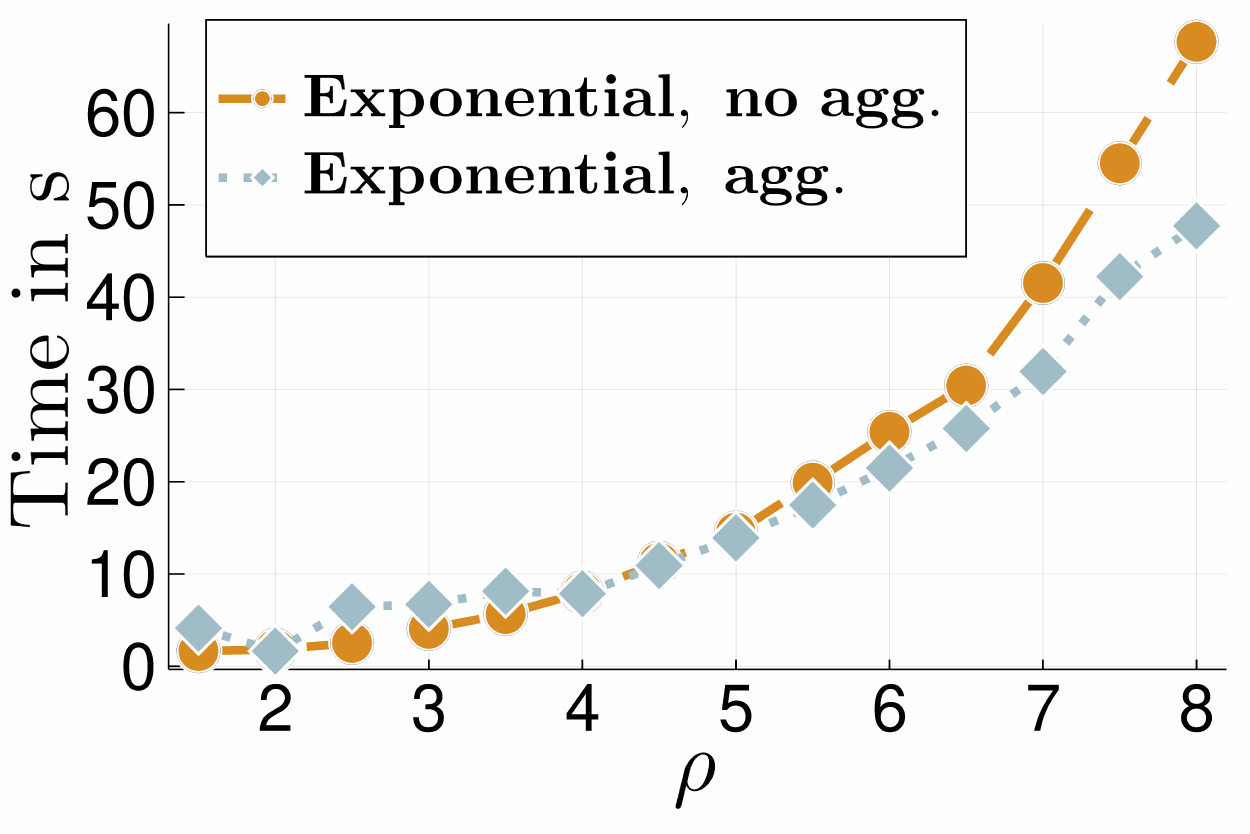}
    \includegraphics[scale=0.145]{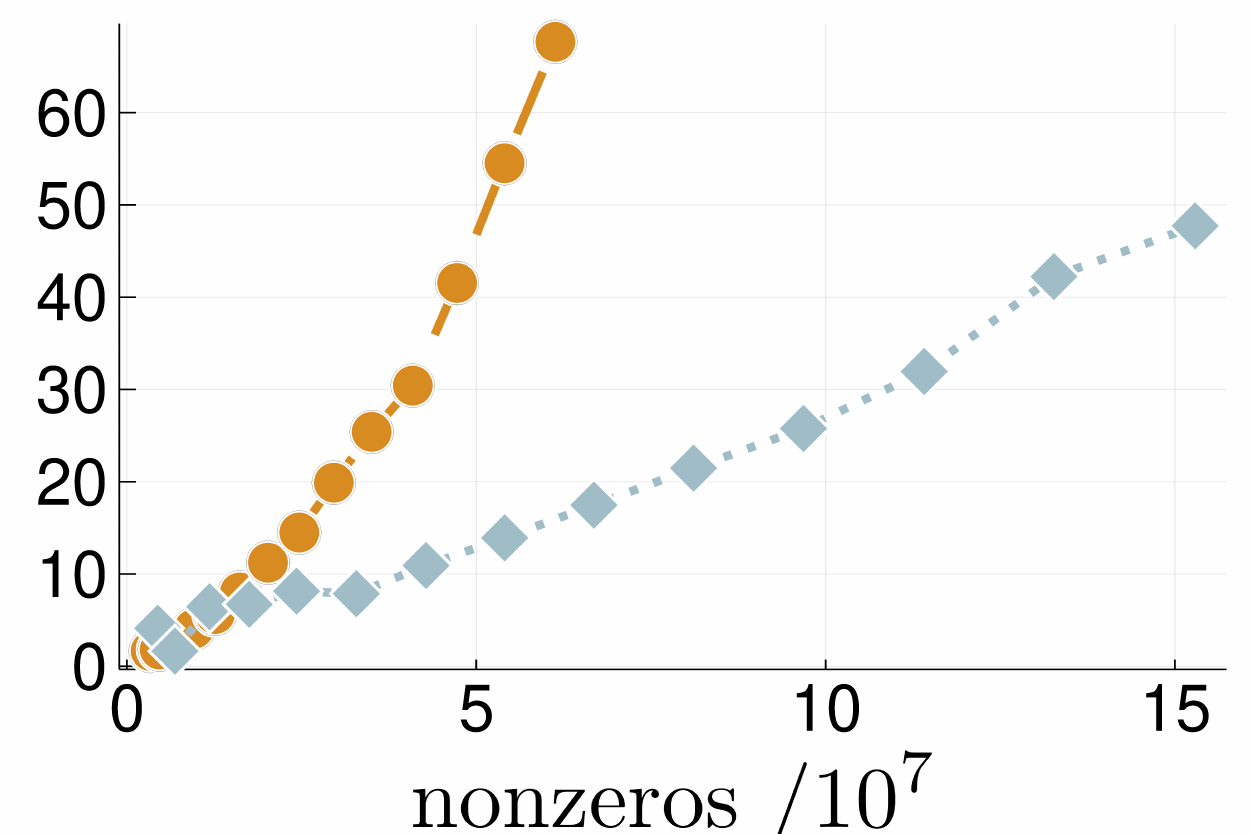}
    \includegraphics[scale=0.145]{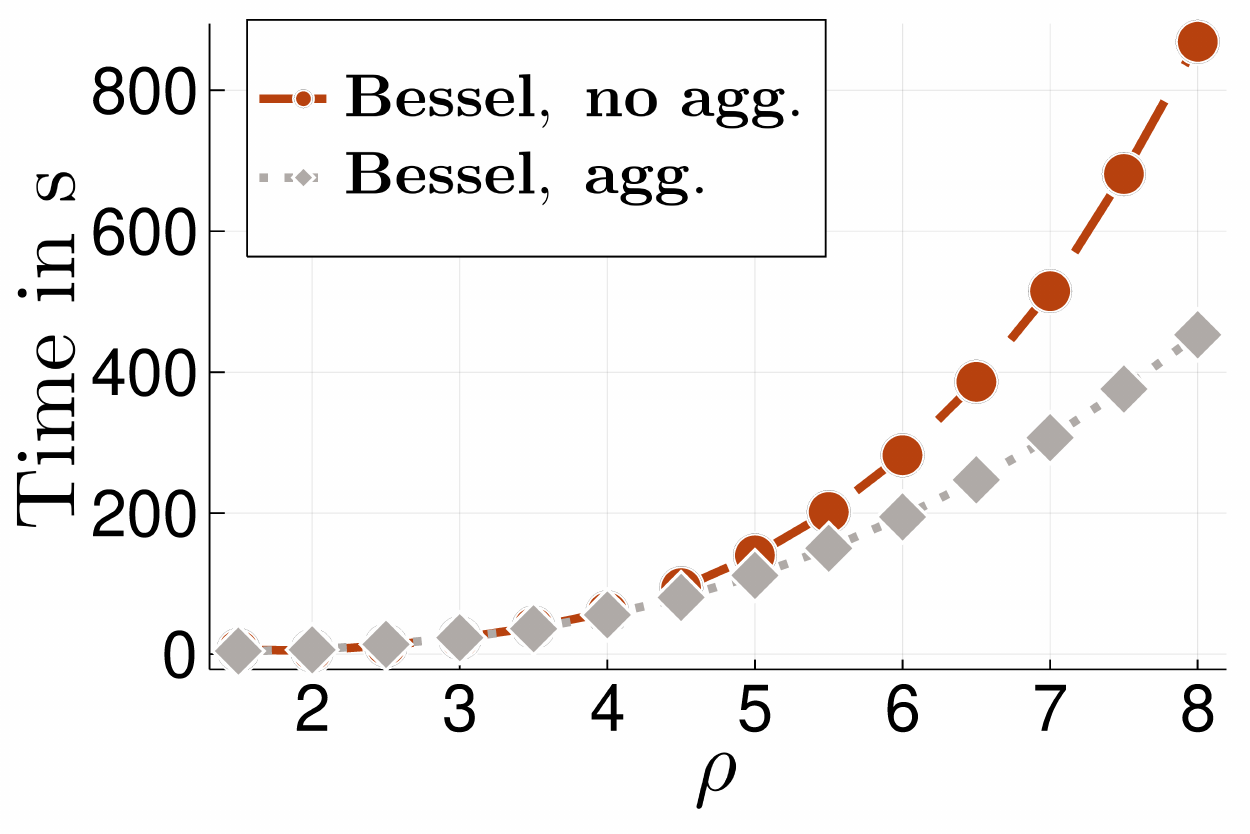}
    \includegraphics[scale=0.145]{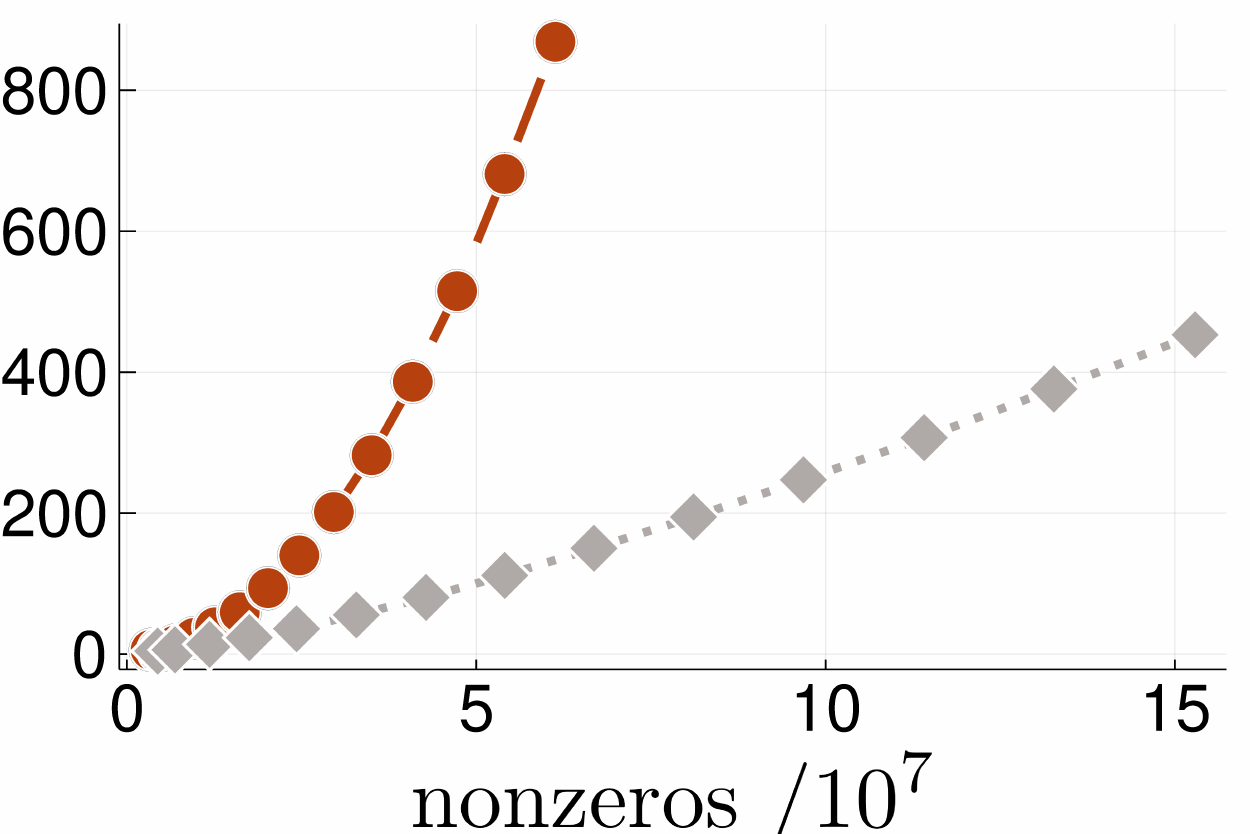}
	\caption{Time for computing the factor $L^{\rho}$ with or without aggregation ($N = 10^6$), as a function of $\rho$ and of the number of nonzero entries. For the first two panels, the Mat{\'e}rn covariance function was computed using a representation in terms of exponentials, while for the second two panels they were computed using (slower) Bessel function evaluations. Computations performed on an Intel\textsuperscript{\textregistered}Core\texttrademark i7-6400 CPU with 4.00GHz and 64\,GB of RAM. 
	The second and fourth panel show that aggregation leads to faster computation despite producing much denser Cholesky factors (and hence higher accuracy).}
    \label{fig:timings}
\end{figure}

We further provide timing results for $10^6$ training points uniformly distributed in $\left[0, 1\right]^2$ comparing the aggregated and non-aggregated version of the algorithm in \cref{fig:timings}.
As predicted by the theory, the aggregated variant scales better as we are increasing $\rho$.
This holds true both when using \texttt{Intel}\textsuperscript{\textregistered} \texttt{oneMKL Vector Mathematics functions} library to evaluate the exponential function, or when using \texttt{amos} to instead evaluate the modified Bessel function of the second kind.
While the former is faster and emphasizes the improvement from $\O(N \rho^{3d} )$ to $\O(N \rho^{2d} )$ for the complexity of computing the factorization, the latter can be used to evaluate Mat\'ern kernels with arbitrary smoothness.
Due to being slower, using Bessel functions highlights the improvement from needing $\O(N \rho^{2d} )$ matrix evaluations without the aggregation to just $\O(N \rho^{d} )$.
By plotting the number of nonzeros used for the two approaches, we see that the aggregated version is faster to compute despite using many more entries of $\KM$ than the non-aggregated version.
Thus, aggregation is both faster and more accurate for the same value of $\rho$, which is why we recommend using it over the non-aggregated variant.

\subsection{Adding noise\label{ssec:noisecomparison}}

\begin{figure}
    \centering
    \includegraphics[scale=0.20]{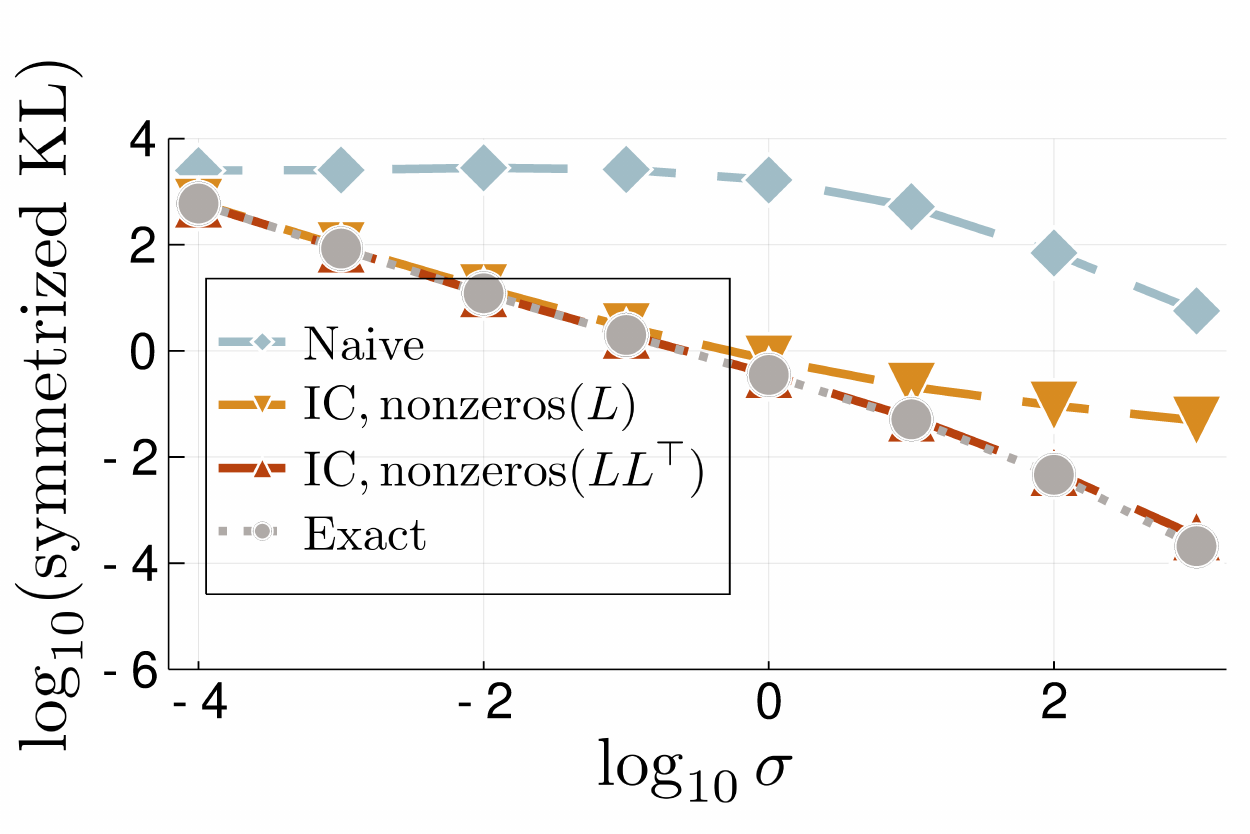}
    \includegraphics[scale=0.20]{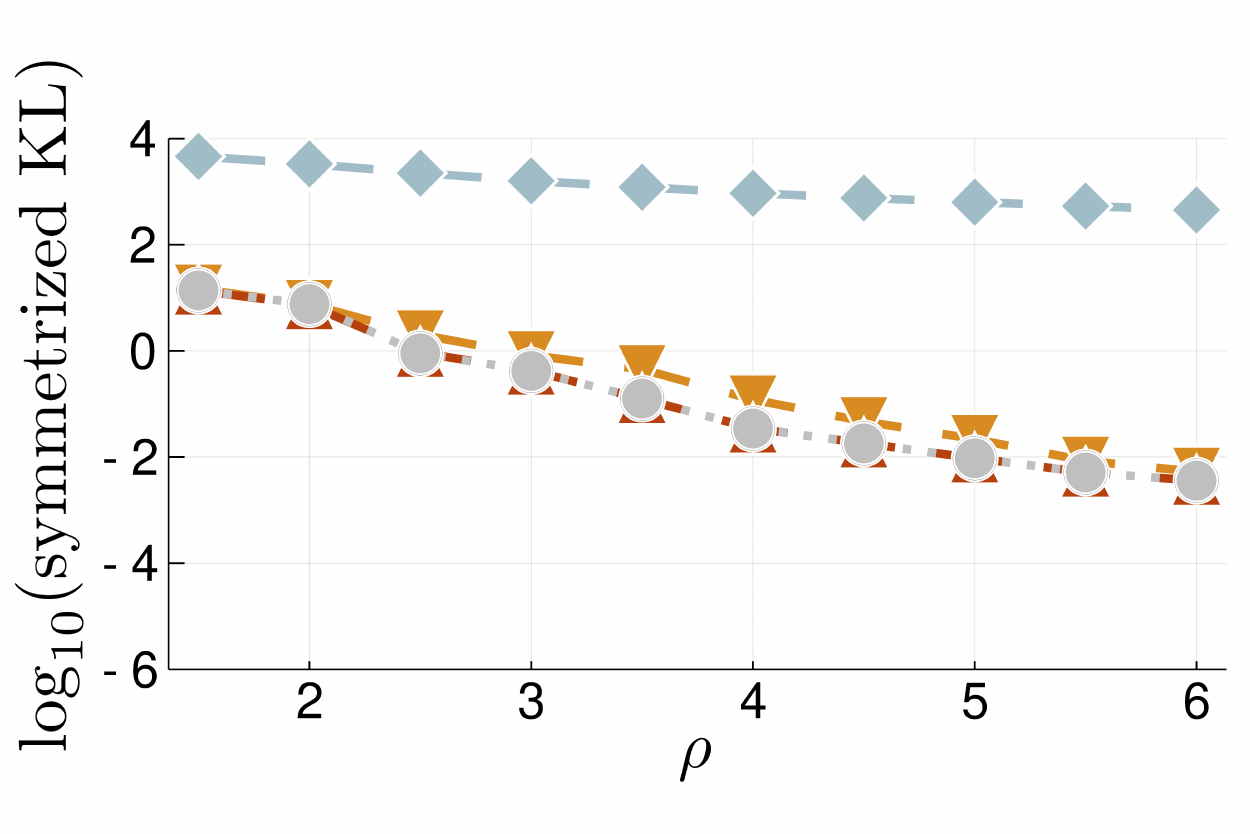}
    \includegraphics[scale=0.20]{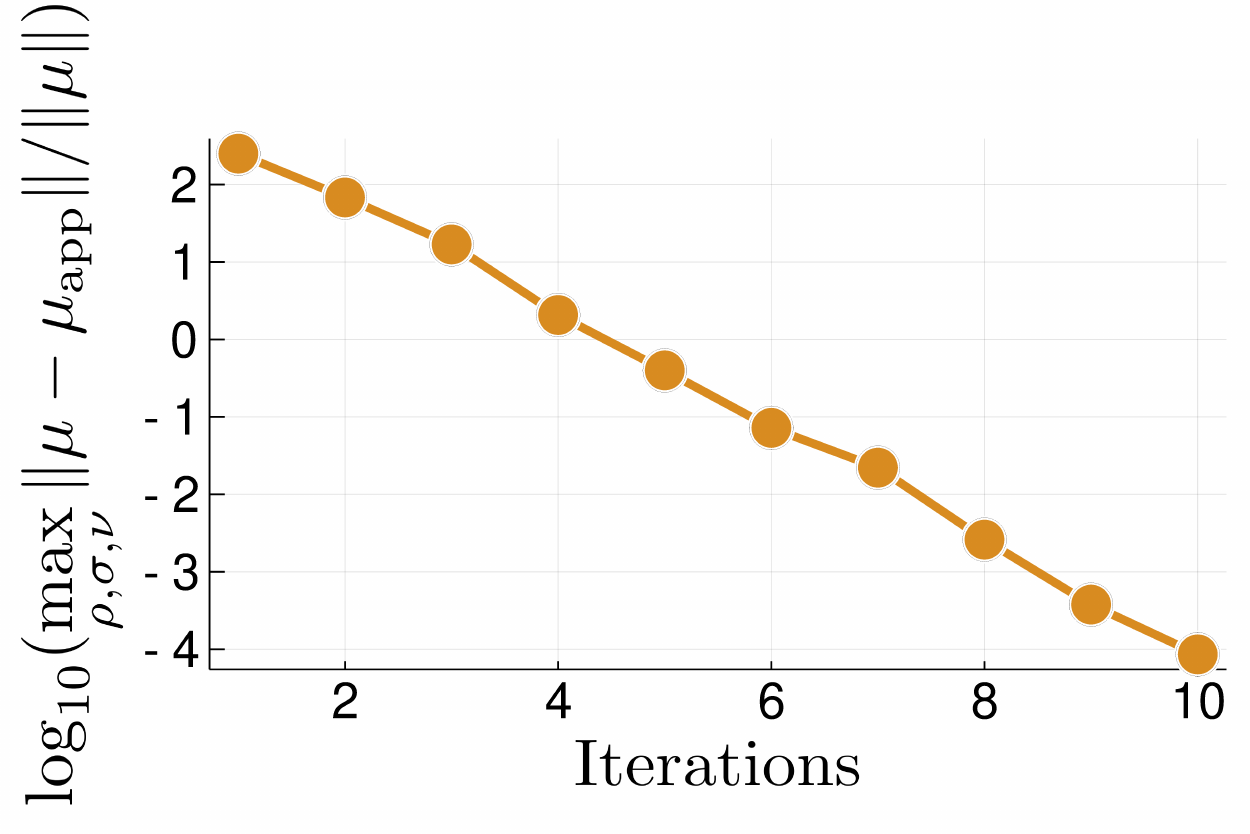}
    \caption{Comparison of the methods proposed in \cref{ssec:noise} for approximating $\Sigma = \KM + R$, where $\KM$ is based on a Mat\'ern covariance with range parameter $0.5$ and smoothness $\nu = 3/2$ at $N = 10^4$ uniformly sampled locations on the unit square, and $R=\sigma^2 I$ is additive noise. For each approximation, we compute the symmetrized KL divergence (the sum of the KL-divergences with either ordering of the two measures) to the true covariance. 
    ``Naive'': Directly apply \cref{alg:aggregated} to $\Sigma$.
    ``Exact'': Apply \cref{alg:aggregated} to $\KM$, then compute $\tilde{L}$ as the exact Cholesky factorization of $A \defeq R^{-1} + \hat{\KM}^{-1}$.
    ``IC'': Apply \cref{alg:aggregated} to $\KM$, then compute $\tilde{L}$ using incomplete Cholesky factorization of $A$ on the sparsity pattern of either $L$ or $LL^{\top}$. 
    (Left:) Varying $\sigma$, fixed $\rho = 3.0$. 
    (Middle:) Varying $\rho$, fixed $\sigma = 1.0$.
    (Right:) Maximal relative error (over the above $\sigma$, $\rho$, $\nu \in \{1/2, 3/2, 5/2\}$ and $10$ random draws) of inverting $A$ using up to $10$ conjugate-gradient iterations ($x$-axis), with IC, nonzeros(L) as preconditioner.}
    \label{fig:nugget}
\end{figure}

We now experimentally verify the claim that the methods described in \cref{ssec:noise} enable accurate approximation in the presence of independent noise, while preserving the sparsity, and thus computational complexity, of our method.
To this end, pick a set of $N = 10^4$ points uniformly at random in $\Omega = \left[0, 1\right]^2$, use a Mat{\' e}rn kernel with smoothness $\nu = 3/2$, and add I.I.D.\ noise with variance $\sigma^2$. We use an aggregation parameter $\lambda = 1.5$.
As shown in \cref{fig:nugget}, our approximation stays accurate over a wide range of values of both $\rho$ and $\sigma$, even for the most frugal version of our method.
The asymptotic complexity for both incomplete-Cholesky variants is $\O(N \rho^{2d})$, with the variant using the sparsity pattern of $LL^{\top}$ being roughly equivalent to doubling $\rho$.
Hence, to avoid additional memory overhead, we recommend using the sparsity pattern of $L$ as a default choice; the accuracy of the resulting log-determinant of $\Sigma$ should be sufficient for most settings, and the accuracy for solving systems of equations in $\Sigma$ can easily be increased by adding a few iterations of conjugate gradient.

\subsection{Including prediction points}

\begin{figure}
    \centering
    \includegraphics[width=0.25\textwidth]{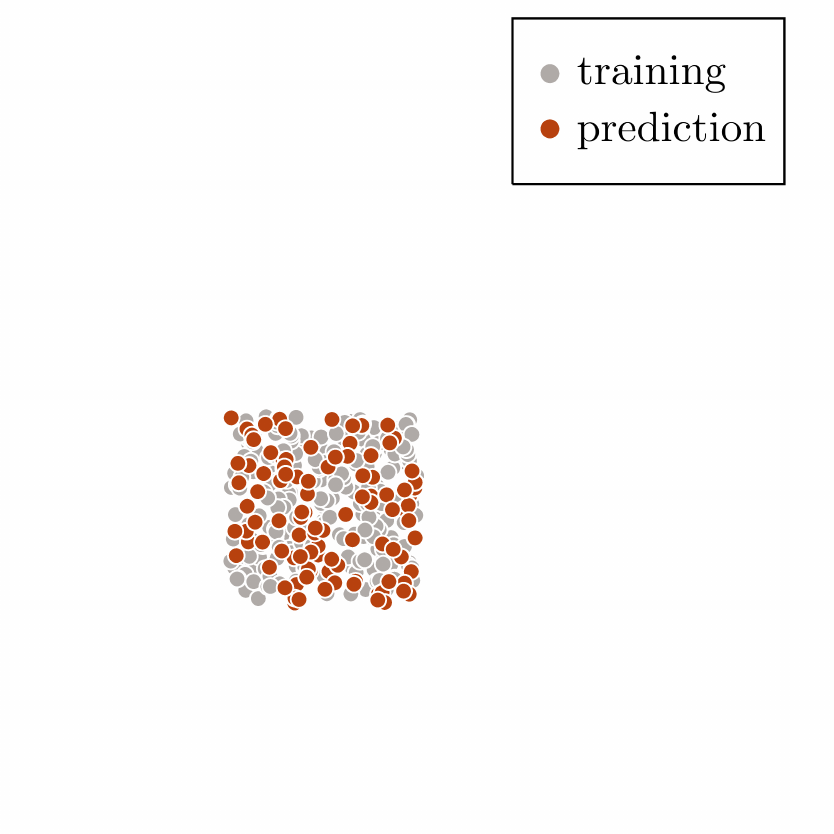}
    \includegraphics[width=0.35\textwidth]{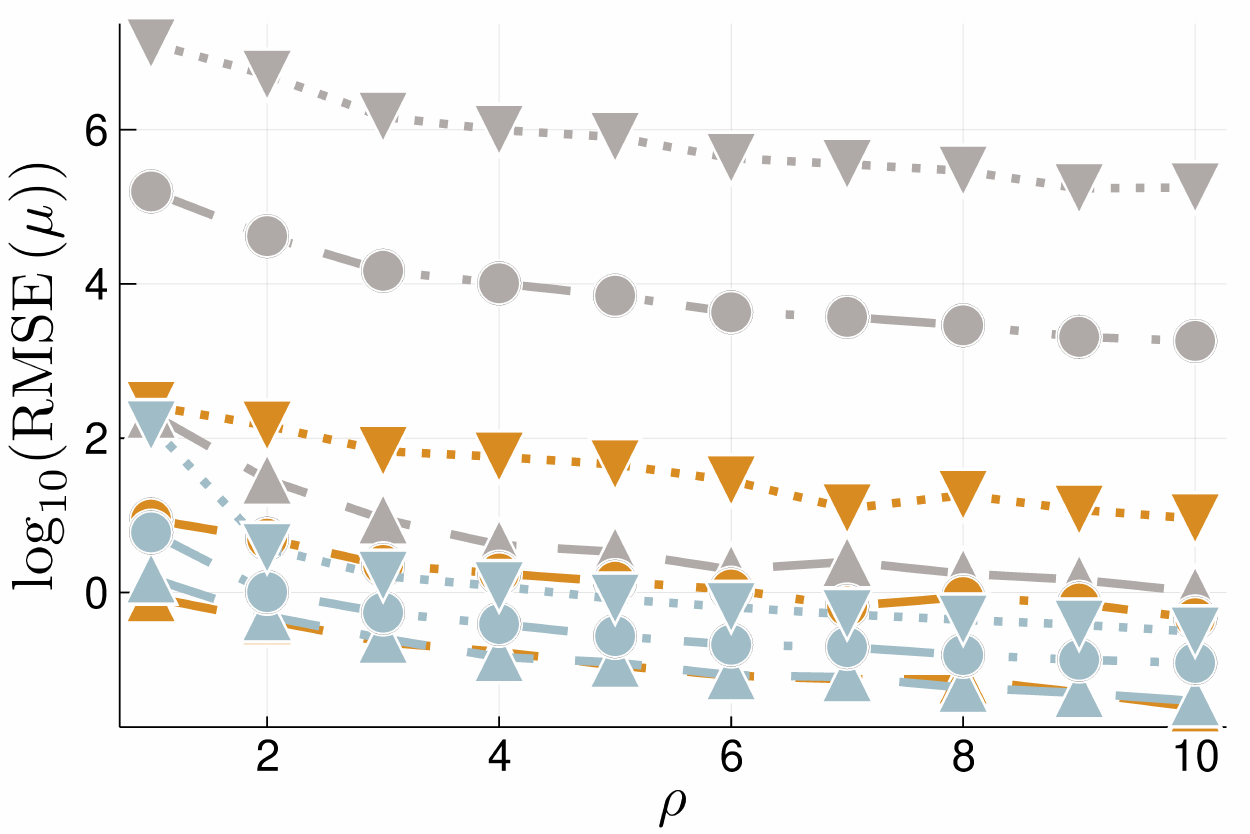}
		\includegraphics[width=0.35\textwidth]{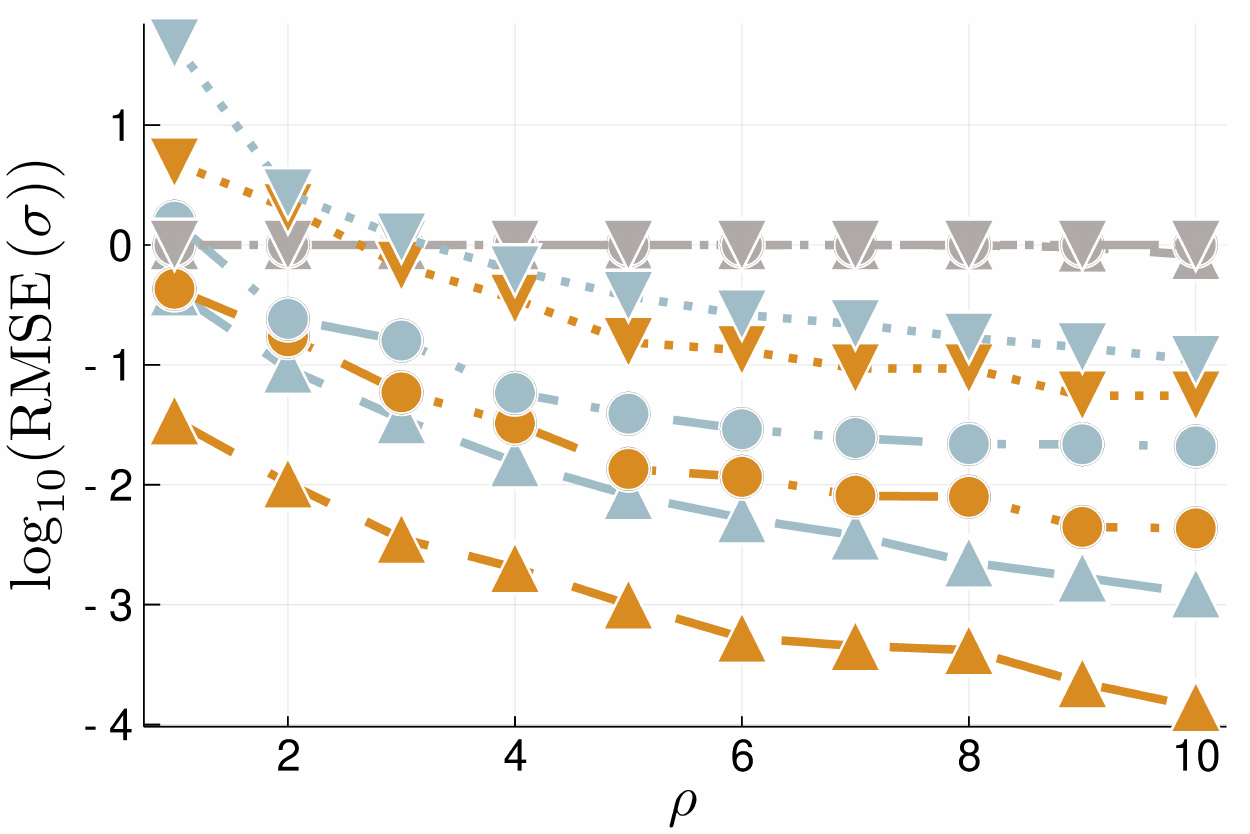}
    \includegraphics[width=0.25\textwidth]{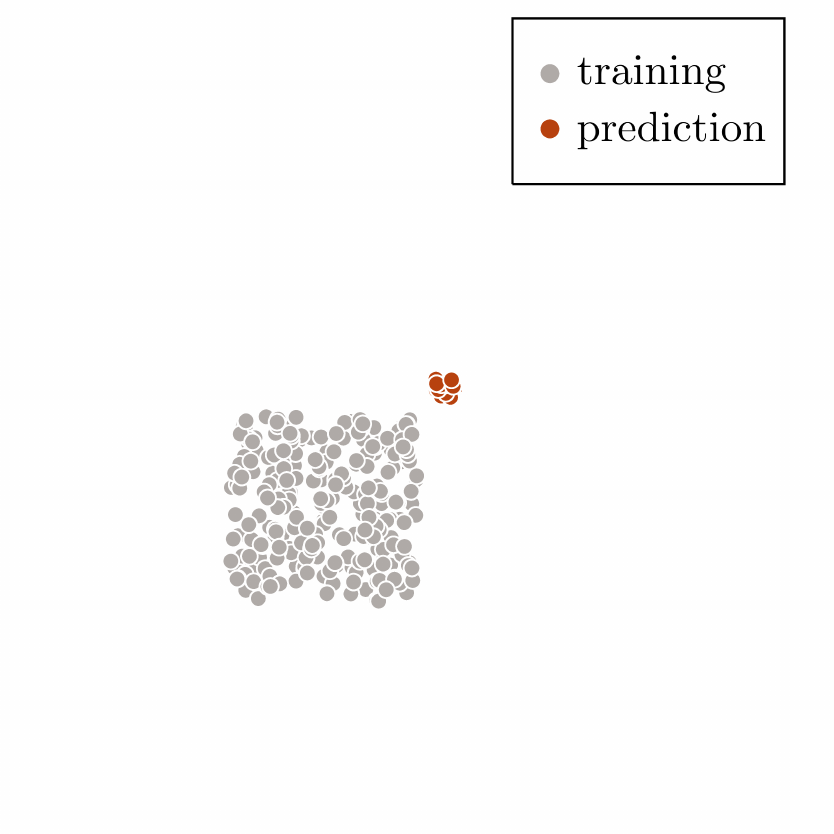}
    \includegraphics[width=0.35\textwidth]{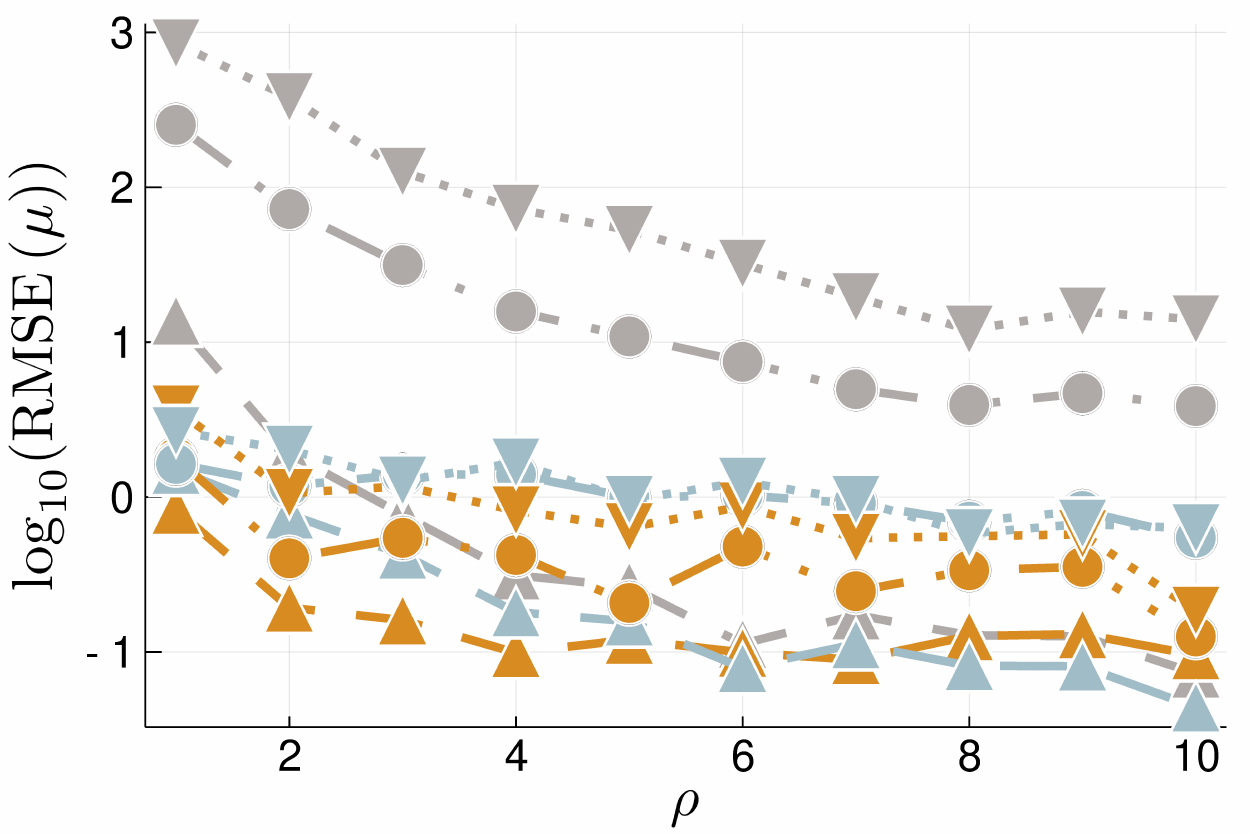}
		\includegraphics[width=0.35\textwidth]{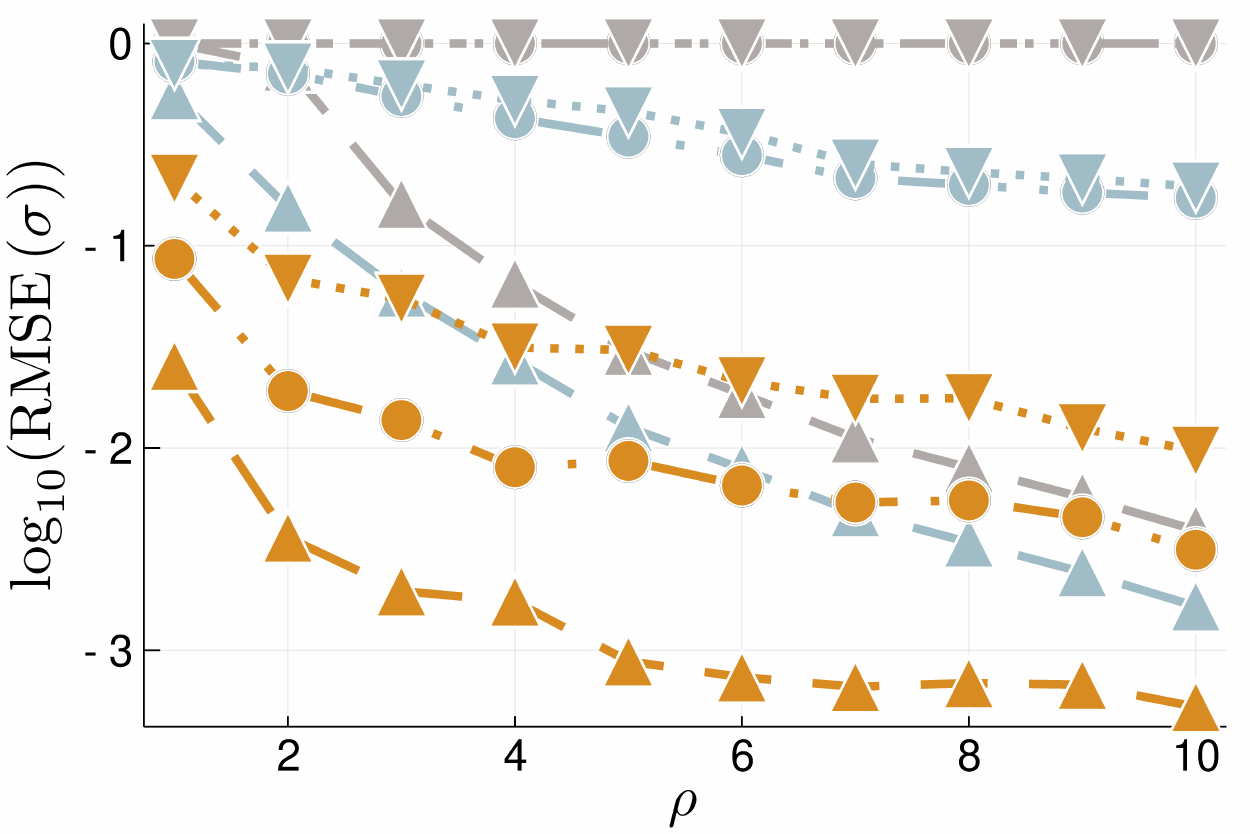}
    \includegraphics[width=0.25\textwidth]{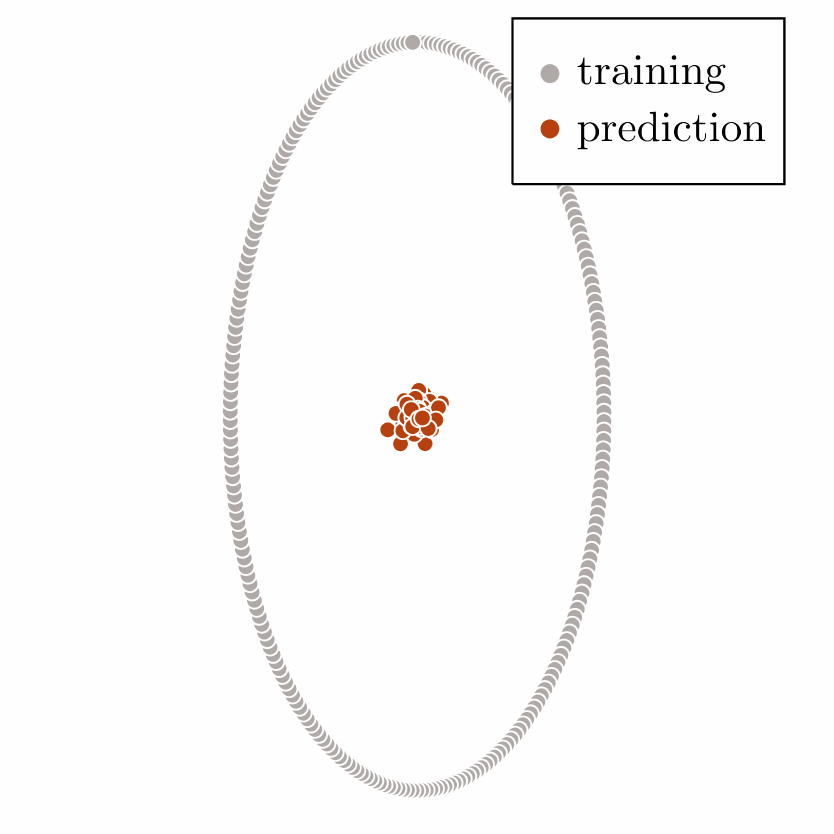}
    \includegraphics[width=0.35\textwidth]{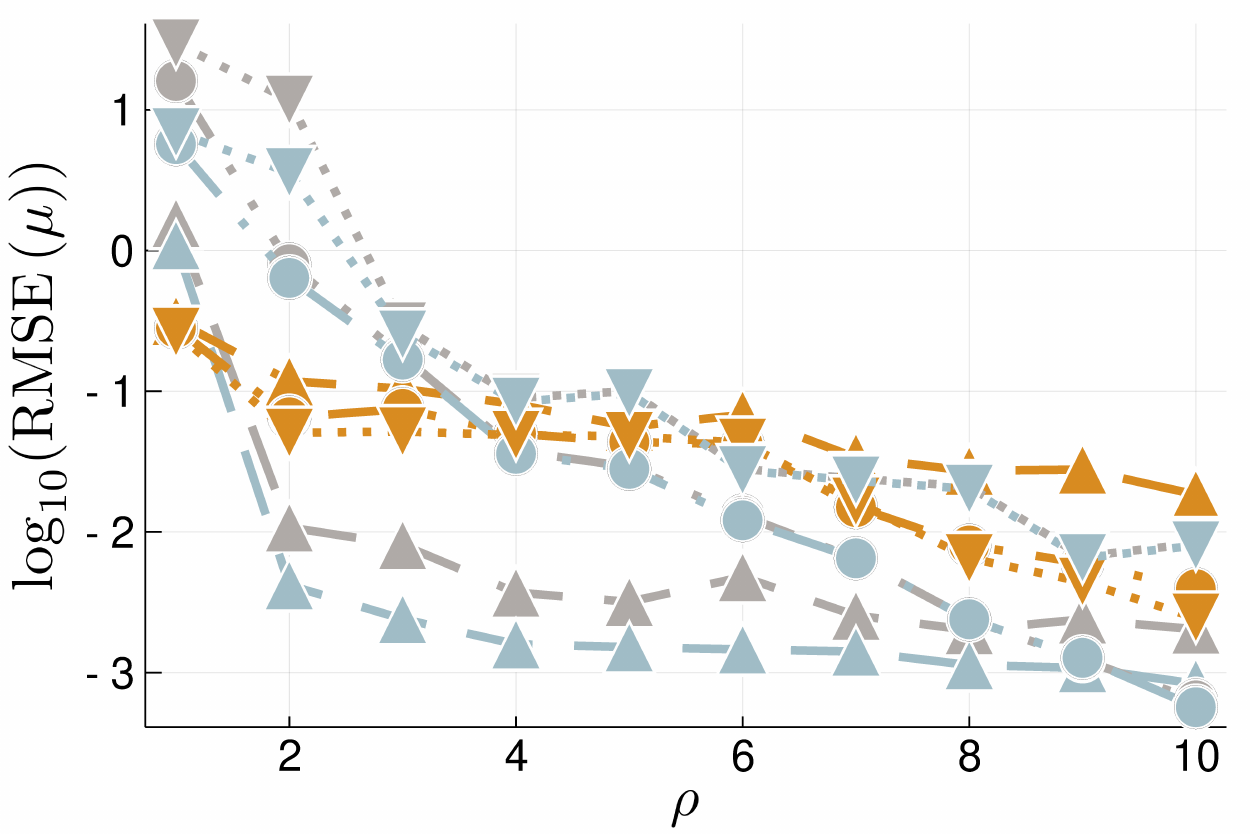}
    \includegraphics[width=0.35\textwidth]{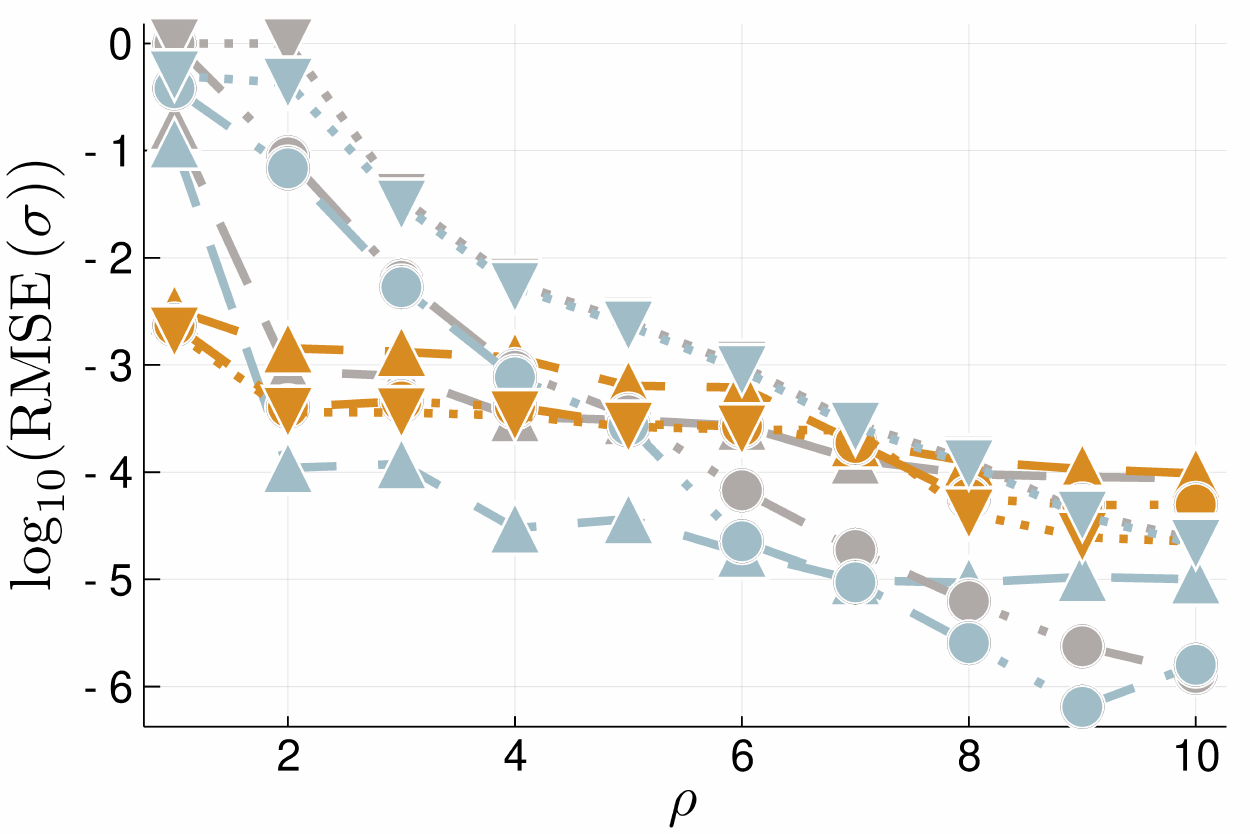}
    \includegraphics[width=0.32\textwidth]{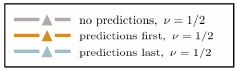}
    \includegraphics[width=0.32\textwidth]{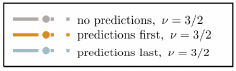}
    \includegraphics[width=0.32\textwidth]{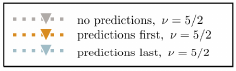}
    \label{fig:includePred}
		\caption{To analyze the effects of including the prediction points into the approximation, we consider three datasets. Each consists of $5 \times 10^4$ training points and $10^2$ test points, averaged over ten independent realizations of the Gaussian process. We use Mat{\'e}rn kernels with range parameter $0.5$ and smoothness $\nu \in \{1/2, 3/2, 5/2\}$, with $\rho$ ranging from $1.0$ to $10.0$. 
		We do not use aggregation since it might lead to slightly different sparsity patterns for the three variants, possibly polluting the results.
		On the $y$-axis we plot the RMSE of the  posterior mean and standard deviation, scaled in each point by the reciprocal of the true posterior standard deviation.
		In almost all cases, including the prediction points into the approximation improves the accuracy. 
		The comparison between ordering the predictions first or last is complicated, but ``predictions-last''  seems to perform better for lower smoothness and ``predictions-first'' for higher smoothness.}
\end{figure}

We continue by studying the effects of including the prediction points in the approximation, as described in \cref{sssec:predfirst,sssec:predlast}.
We compare not including the predictions points in the approximation with including them either before or after training points in the approximation.
We compare the accuracy of the approximation of the posterior mean and standard deviation over three different geometries and a range of different values for $\rho$.
The results, displayed in \cref{fig:includePred}, show that including the prediction points can increase the accuracy by multiple orders of magnitude.
The performance difference between the two schemes for including prediction points varies over different geometries, degrees of regularity, and values of $\rho$.
If the number of prediction points is comparable to the number of training points, the only way to avoid quadratic scaling in the number of points is to order the prediction points first, making this approach the method of choice.
If we only have few prediction points, ordering the prediction variables last can improve the accuracy for low orders of smoothness, especially in settings in which only a small part of the training data is used in the prediction-variables-first approach (e.g., second row in \cref{fig:includePred}).

\subsection{Comparison to HSS matrices\label{ssec:HSSComparison}}

As described in the introduction, there are many existing methods for the approximation and inversion of dense covariance matrices.
Hierarchically semiseparable (HSS) matrices \cite{chandrasekaran2004fast,xia2010fast} are a natural candidate for comparison with our method, because they are amenable to a Cholesky factorization \cite{li2012new}, implementations of which are available in existing software packages.
They are also closely related to hierarchically off-diagonal low-rank (HODLR) matrices, which have been promoted as tools for Gaussian process regression \cite{ambikasaran2014fast}.
We consider a regression problem with $50^3$ training points on a randomly perturbed regular grid and $50$ test points distributed uniformly at random in the unit cube. 
Using the Mat{\'e}rn covariance with $\nu = 3/2$ and length scale $l = 0.2$, we compute the posterior mean and standard deviation for $50$ samples using the method described in \cref{sssec:predlast} and the HSS implementation of \texttt{H2Pack} \cite{huang2020h2pack}, both using eight threads on an Intel\textsuperscript{\textregistered} Skylake \texttrademark CPU with 2.10GHz and 192\,GB of RAM. 
In \cref{fig:HSS-comparison}, we report the computational time and accuracy for a wide range of tuning parameters ($\rho$ for our method, error tolerance and diagonal shift for HSS).
We ignore the setup cost for both methods, which includes the selection of the ``\emph{numerical proxy points}'' for the HSS approach.
Our experiments show that for a given target accuracy, our method is an order of magnitude faster than HSS, despite the highly optimized implementation of the latter.
For very high accuracies, the gap between the methods closes but the memory cost of HSS approaches that of the dense problem, preventing us from further increasing the target accuracy.
We note that for three-dimensional problems, $\mathcal{H}^2$-matrices have better asymptotic complexity than HSS matrices, making them a possibly stronger competitor; however, the Cholesky factorization of $\mathcal{H}^2$-matrices is considerably more complicated and not implemented in \texttt{H2Pack}.
Another possible approach is the inversion of an $\mathcal{H}^2$ approximation using conjugate gradient methods, using our method or HSS matrices (\cite{xing2020efficient}) as a preconditioner.
We defer a more comprehensive comparison to the various kinds of hierarchical matrices to future work.

\begin{figure}
    \centering
    \includegraphics[width=0.45 \columnwidth]{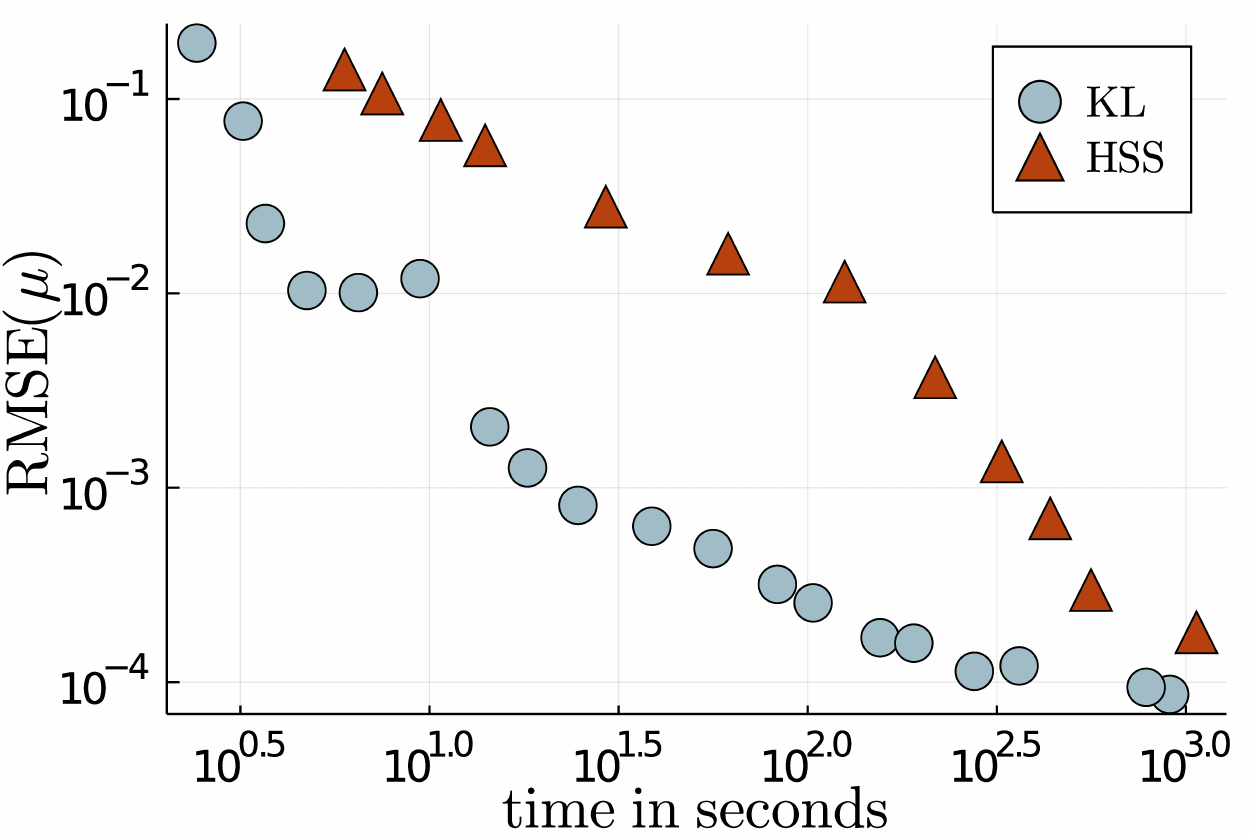}
    \includegraphics[width=0.45 \columnwidth]{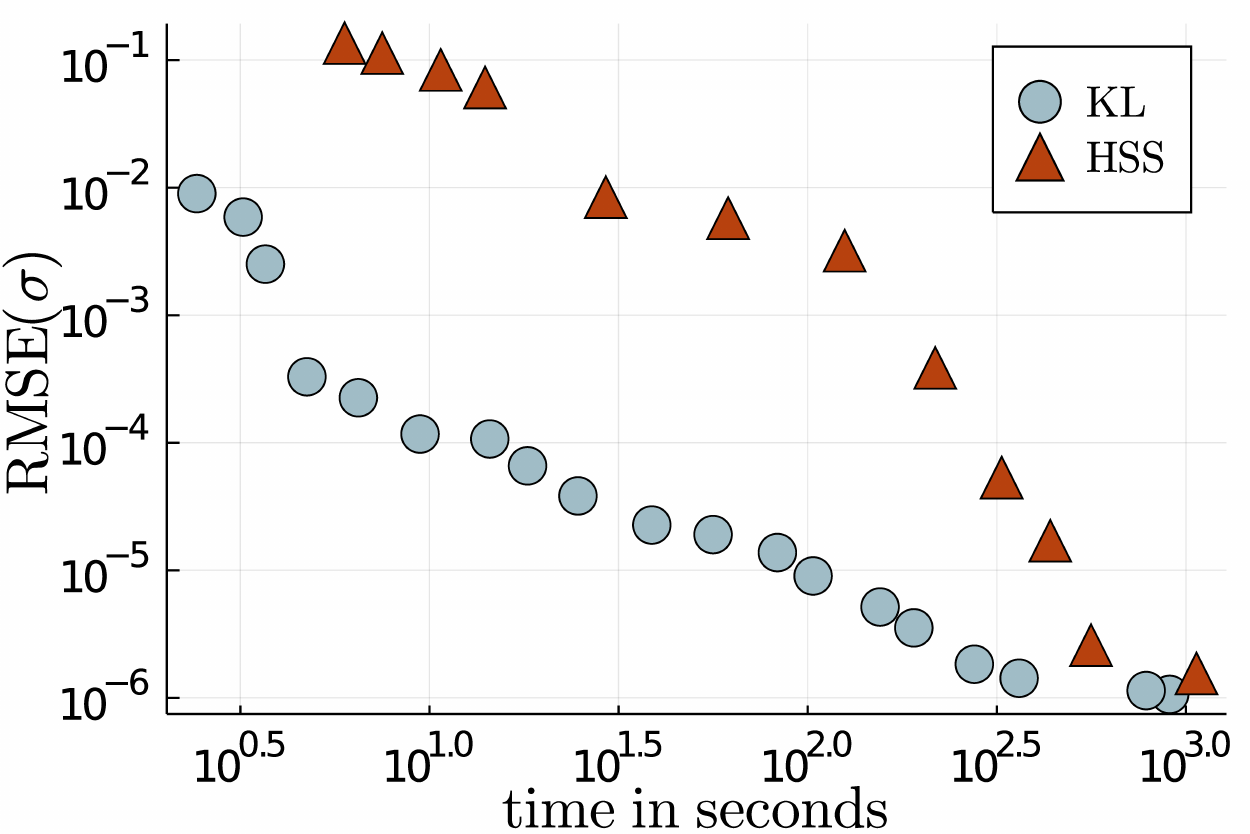}
    \label{fig:HSS-comparison}
	\caption{We compare the accuracy and computational time of our method described in \cref{sssec:predlast} with the HSS implementation of \texttt{H2Pack} \cite{huang2020h2pack}.
	Each point corresponds to a different run with different parameters ($\rho$, tolerance, and diagonal shift).
	Throughout this experiment, we use the aggregation scheme described in \cref{ssec:aggregated} with $\lambda = 1.25$.
	The left plot shows the $\mathrm{RMSE}$ of the posterior mean and the right plot that of the posterior standard deviation.
	Our method is significantly faster for a wide range of target accuracies.}
\end{figure}

\subsection{Single-layer boundary element methods}
\label{ssec:BEM}

We now provide an application to boundary element methods.
For a domain $\Omega \in \Reals^d$ with boundary $\partial \Omega$, let us assume that we want to solve the Dirichlet boundary value problem
\begin{alignat*}{2}
    - \Delta u(x) &= 0, \quad & \forall  x&\in \Omega \\
     u(x) &= g(x),      \quad & \forall  x&\in \partial \Omega.
\end{alignat*}
%
For $d = 3$, the Green's function of the Laplace operator is given by the gravitational / electrostatic potential 
\begin{equation*}
    \K_{\Reals^3}(x,y) = \frac{1}{4 \pi |x - y|}.
\end{equation*}
Under mild regularity assumptions one can verify that 
\begin{equation*}
    u = \int \limits_{x \in \partial \Omega} \K_{\Reals^3}(x, \cdot) h(x) \dx, \quad 
\text{ for $h$ the solution of } \quad
    g = \int \limits_{x \in \partial \Omega} \K_{\Reals^3}(x, \cdot) h(x) \dx.
\end{equation*}

Let us choose finite dimensional basis functions $\left\{\phi_{i}\right\}_{i \in \I_{\Pred}}$  in the interior of $\Omega$ and $\left\{\phi_{i}\right\}_{i \in \I_{\Train}}$ on the boundary of $\Omega$.
We form the matrix $\KM \in \Reals^{\left(\I_{\Train} \cup \I_{\Pred}\right) \times \left(\I_{\Train} \cup \I_{\Pred}\right) }$ as 
\begin{equation}
    \KM_{ij} \defeq \int_{x \in \mathcal{D}_i} \int_{y \in \mathcal{D}_j} \phi_{i}(x) \K_{\Reals^3}\left(x, y\right) \phi_{j}(y) \dy \dx,
    \quad \mathrm{where} \quad 
    \mathcal{D}_p =
    \begin{cases}
    \partial \Omega, \; &\mathrm{for} \; p \in \I_{\Train} \\
    \Omega, \; &\mathrm{for} \; p \in \I_{\Pred} 
    \end{cases}
\end{equation}
and denote as $\KM_{\Train, \Train}, \KM_{\Train, \Pred}, \KM_{\Pred, \Train}, \KM_{\Pred, \Pred}$ its restrictions to the rows and columns indexed by $\I_{\Train}$ or $\I_{\Pred}$.
Defining
\begin{equation*}
    \vec{g}_i \defeq \int \limits_{x \in \partial \Omega} \phi_i(x) g(x) \dx, \quad \forall i \in \I_{\Train} \quad \mathrm{and} \quad
    \vec{u}_i \defeq \int \limits_{x \in \partial \Omega} \phi_i(x) u(x) \dx, \quad \forall i \in \I_{\Pred},
\end{equation*}
we approximate $\vec{u}$ as 
\begin{align}
    \label{eqn:bem}
    \vec{u} &\approx \KM_{\I_{\Pred}, \I_{\Train}} \KM_{\I_{\Train}, \I_{\Train}}^{-1}\vec{g}.
\end{align}
This is a classical technique for the solution of partial differential equations, known as single layer boundary element methods \cite{sauter2010boundary}.
However, it can also be seen as Gaussian process regression with $u$ being the conditional mean of a Gaussian process with covariance function $\K$, conditional on the values of the process on $\partial \Omega$. 
Similarly, it can be shown that the zero boundary value Green's function is given by the posterior covariance of the same process.

The Laplace operator in three dimensions does not satisfy $s > d/2$ (cf. \cref{sssec:appproxerr}). 
Therefore, the variance of pointwise evaluations at $x \in \Reals^3$ given by $\K_{\Reals^3}(x,x)$ is infinite and we cannot let $\left\{\phi_i\right\}_{i \in \I_{\Pred}}$ be Dirac-functions as in other parts of this work.

Instead, we recursively subdivide the boundary $\partial \Omega$ and use  Haar-type wavelets as \cite[Ex.~3.2]{Schafer2017} for $\left\{\phi_{i}\right\}_{i \in \I_{\Train}}$. 
For our numerical experiments, we will consider $\Omega \defeq [0,1]^3$ to be the three-dimensional unit cube.
On each face of $\partial \Omega$, we then obtain a multiresolution basis by hierarchical subdivision, as shown in \cref{fig:basis-functions}.
In this case, the equivalent of a maximin ordering is an ordering from coarser to finer levels, with an arbitrary ordering within each level.
We construct our sparsity pattern as 
\begin{equation}
    \mathcal{S}_{\prec, \ell_j,  \rho} \defeq \{\, (i, j) : i \succeq j, \dist(x_i, x_j) \leq \rho \ell_j + \sqrt{2}(\ell_i + \ell_j) \,\},
\end{equation}
where for $i \in \I_{\Train}$, $x_i$ is defined as the center of the support of $\phi_i$ and $\ell_i$ as half of the side-length of the (quadratic) support of $\phi_i$. 
The addition of $\sqrt{2}(\ell_i + \ell_j)$ to the right-hand side ensures that the entries corresponding to neighboring basis functions are always added to the sparsity pattern.

\begin{figure}
    \centering
    \includegraphics[width=\columnwidth]{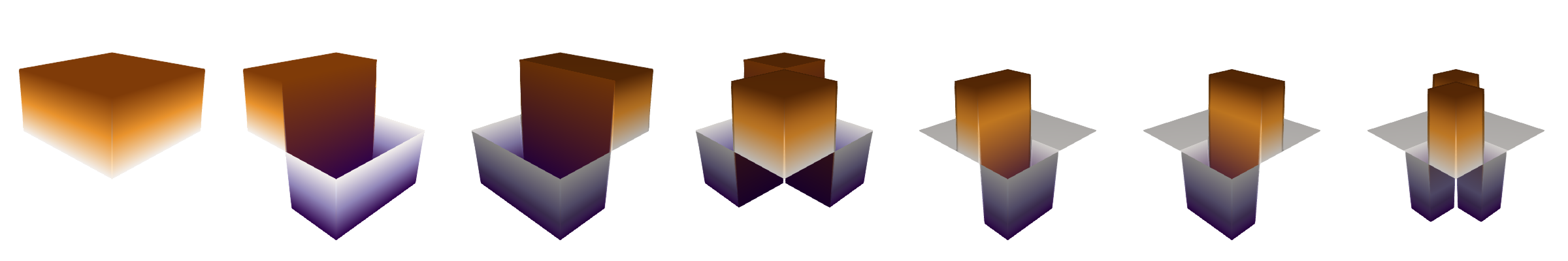}
    \label{fig:basis-functions}
    \caption{We recursively divide each panel of $\partial \Omega$. The basis functions on finer levels are constructed as linear combinations of indicator functions that are orthogonal to functions on coarser levels.}
\end{figure}

We construct a solution $u$ of the Laplace equation in $\Omega$ as the sum over $N_c = 2000$ charges with random signs $\left\{s_i\right\}_{1 \leq i \leq N_c}$ located at points $\left\{c_i\right\}_{1 \leq i \leq N_c}$
We then pick a set of $N_{\Pred}$ points $\left\{x_{i}\right\}_{i \in \I_{\Pred}}$ inside of $\Omega$ and try to predict the values $\left\{u(x_i)\right\}_{i \in \I_{\Pred}}$ using Equation~\eqref{eqn:bem} and the method described in \cref{sssec:predlast}.
We compare the computational time, the number of entries in the sparsity pattern, and the mean accuracy of the approximate method for $\rho \in \left\{1.0, 2.0, 3.0\right\}$, as well as the exact solution of the linear system.
We use different levels of discretization $q \in \{3, \ldots, 8\}$, leading to a spatial resolution of up to $2^{-8}$.
As shown in \cref{fig:BEM_plots}, even using $\rho=1.0$ leads to near-optimal accuracy, at a greatly reduced computational cost.
\begin{figure}
	\centering 
	\includegraphics[scale=0.30]{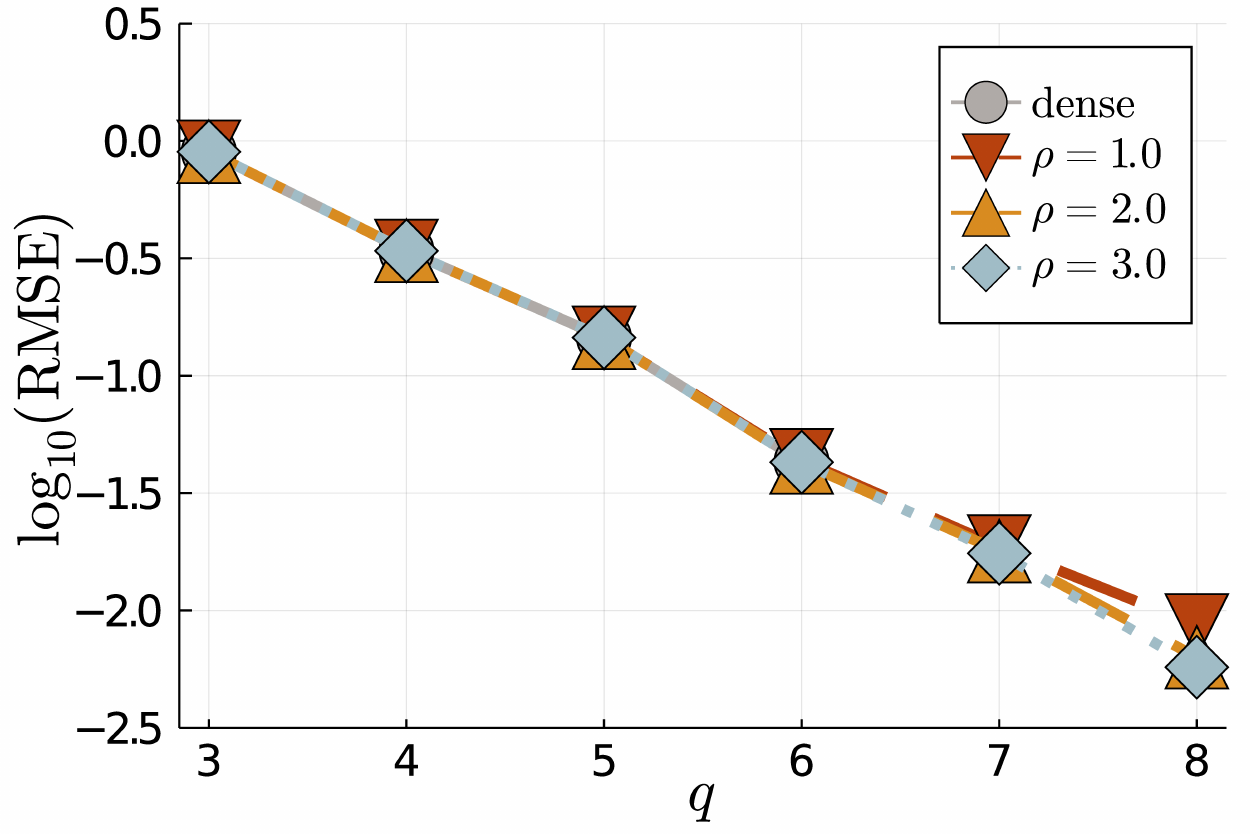}
	\includegraphics[scale=0.30]{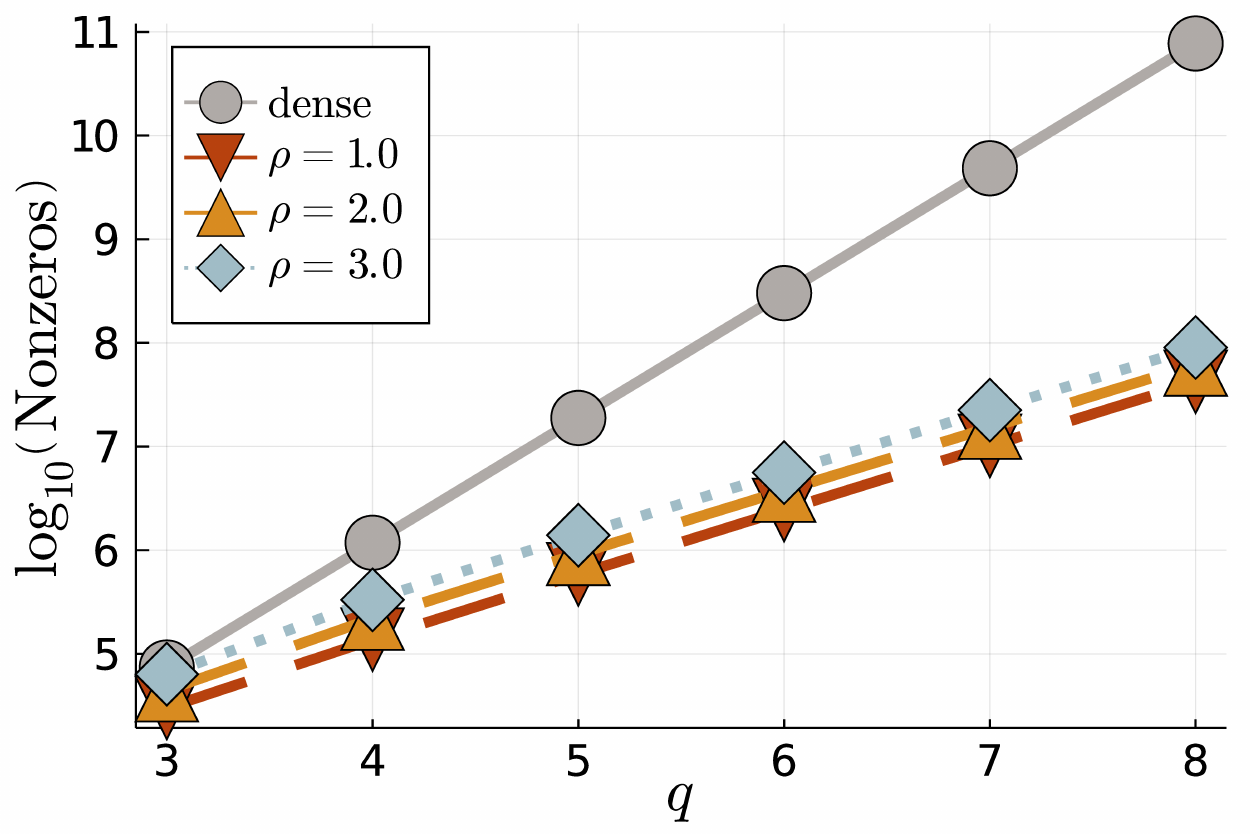}
	\includegraphics[scale=0.30]{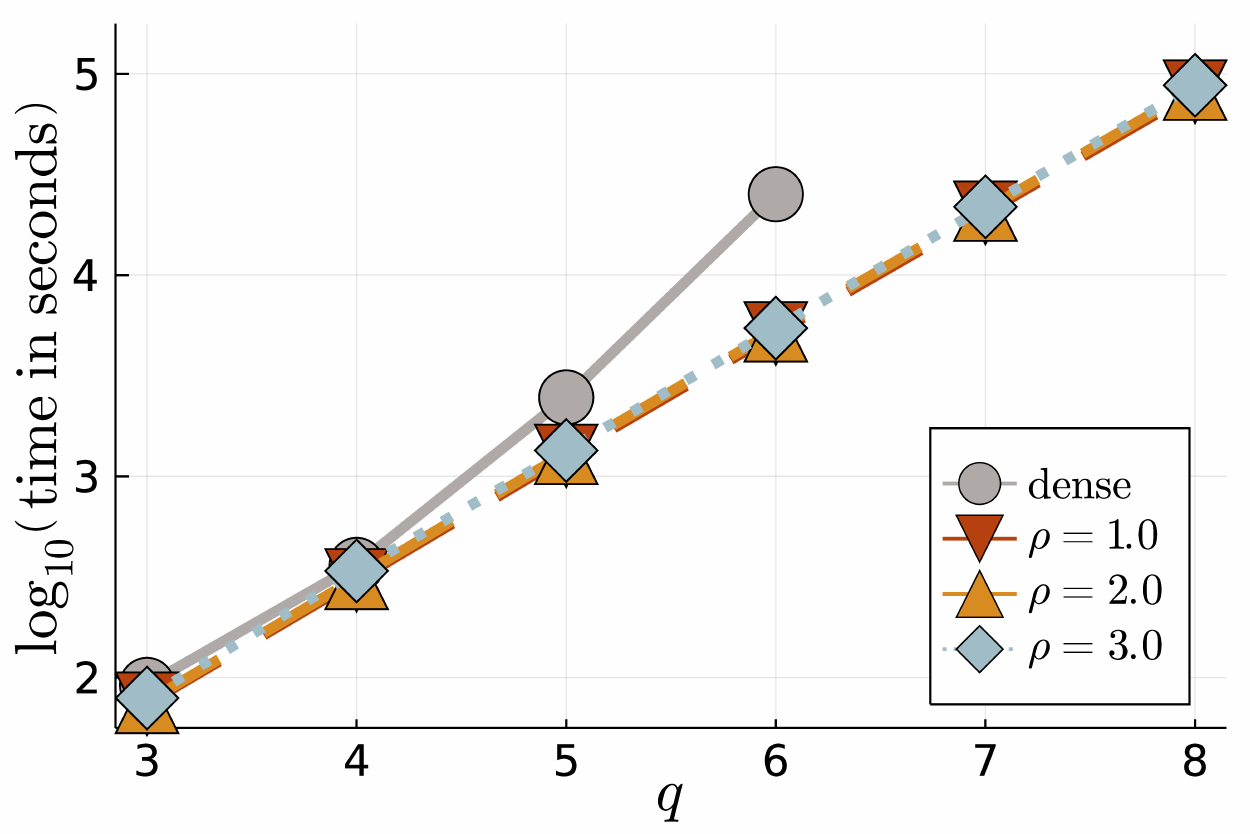}
	\includegraphics[scale=0.30]{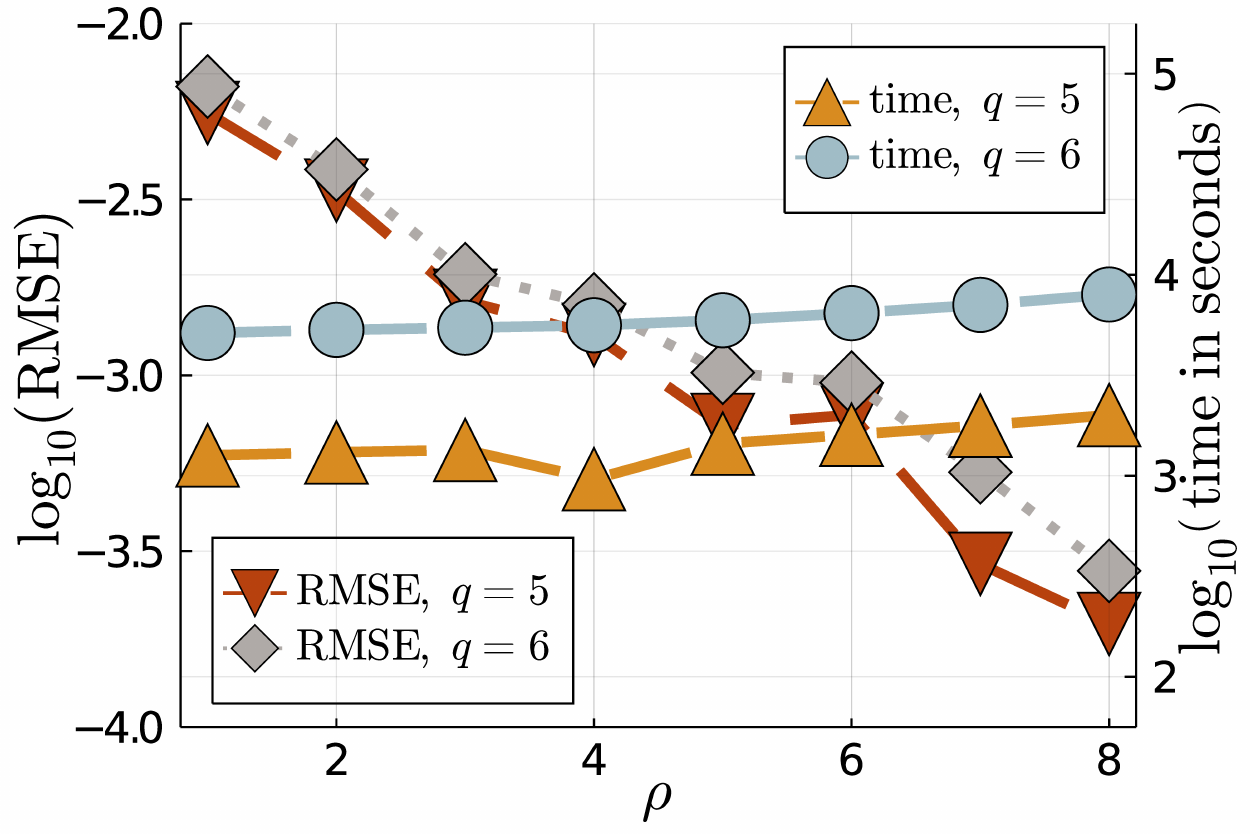}
	\label{fig:BEM_plots}
	\caption{Accuracy and computational complexity in boundary value problem. We compare the root mean square error, number of nonzeros of sparsity pattern, and the computational time for the exact boundary element method and using our approximation for $\rho \in \left\{1, 2, 3\right\}$.
	The dense solution is prohibitively expensive for $q > 6$, which is why accuracy and computational time for these cases are missing. The reason that the computational time is hardly affected by different choices of $\rho$ is due to the fact that entries $\left(\KM_{\Train, \Train}\right)_{ij}$ for nearby $\phi_i, \phi_j$ are significantly more expensive to compute than for distant ones when using an off-the-shelf adaptive quadrature rule. The computations were performed on 32 threads of an Intel\textsuperscript{\textregistered} Skylake \texttrademark CPU with 2.10GHz and 192\,GB of RAM.
	In the first figure, we plot the RMSE compared to the true solution of the PDE as a function of $q \approx \log(N)$. In the last figure, we compute the RMSE between \emph{dense} computation and our method, as well as its computational time, as a function of $\rho$. }
\end{figure}

There exists a rich literature on the numerical solution of boundary element equations \cite{sauter2010boundary}, and we are not yet claiming improvement over the state of the art.
Presently, the majority of the computational time is spent computing the matrix entries of $\KM_{\Train, \Train}$.
In order to compete with the state of the art in terms of wall-clock times, we would need to implement more efficient quadrature rules, which is beyond the scope of this paper. 
Due to the embarrassing parallelism of our method, together with the high accuracy obtained even for small values of $\rho$, we hope that it will become a useful tool for solving boundary integral equations, but we defer a detailed study to future work.

\section{Conclusions}
\label{sec:conclusions}

In this work, we have shown that, surprisingly, the optimal (in KL-divergence) inverse Cholesky factor of a positive definite matrix, subject to a sparsity pattern, can be computed in closed form.
In the special case of Green's matrices of elliptic boundary value problems in $d$ dimensions, we show that by applying this method to the elimination orderings and sparsity patterns proposed by \cite{Schafer2017}, one can compute the sparse inverse Cholesky factor with accuracy $\epsilon$ in computational complexity $\O(N \log^{2d}(N/\epsilon))$ using only $\O(N \log^{d}(N/\epsilon))$ entries of the dense Green's matrix. 
This improves upon the state of the art in this classical problem.
We also propose a variety of improvements, capitalizing on the improved stability, parallelism, and memory footprint of our method.
Finally, we show how to extend our approximation to the setting with additive noise, resolving a major open problem in spatial statistics.

\section*{Acknowledgements}
We thank the two anonymous referees for their constructive feedback that helped us to improve the article.
FS and HO gratefully acknowledge support by the Air Force Office of Scientific Research under award number FA9550-18-1-0271 (Games for Computation and Learning), and the Office of Naval Research under award N00014-18-1-2363 (Toward scalable universal solvers for linear systems). MK's research was partially supported by National Science Foundation (NSF) grants DMS--1654083, DMS--1953005, and CCF--1934904. The computations in \cref{ssec:BEM} were conducted on the Caltech High-Performance Cluster partially supported by a grant from the Gordon and Betty Moore Foundation.

\addcontentsline{toc}{section}{Acknowledgements}


\bibliographystyle{siamplain}
\bibliography{references}

\begin{thebibliography}{10}

\bibitem{ambikasaran2013mathcal}
{\sc S.~Ambikasaran and E.~Darve}, {\em An $\mathcal{O} (n\log n)$ fast direct
  solver for partial hierarchically semi -separable matrices}, Journal of
  Scientific Computing, 57 (2013), pp.~477--501.

\bibitem{ambikasaran2014fast}
{\sc S.~Ambikasaran, D.~Foreman-Mackey, L.~Greengard, D.~W. Hogg, and
  M.~O'Neil}, {\em Fast direct methods for gaussian processes}, IEEE Trans.
  Pattern Anal. Mach. Intell., 38 (2016), pp.~252--265,
  \url{https://doi.org/10.1109/TPAMI.2015.2448083}.

\bibitem{bach2002kernel}
{\sc F.~R. Bach and M.~I. Jordan}, {\em Kernel independent component analysis},
  J. Mach. Learn. Res., 3 (2003), pp.~1--48,
  \url{https://doi.org/10.1162/153244303768966085}.

\bibitem{banerjee2008gaussian}
{\sc S.~Banerjee, A.~E. Gelfand, A.~O. Finley, and H.~Sang}, {\em Gaussian
  predictive process models for large spatial data sets}, J. R. Stat. Soc. Ser.
  B Stat. Methodol., 70 (2008), pp.~825--848,
  \url{https://doi.org/10.1111/j.1467-9868.2008.00663.x}.

\bibitem{bao2020screening}
{\sc J.~Y. Bao, F.~Ye, and Y.~Yang}, {\em {Screening effect in isotropic
  Gaussian processes}}, Acta Mathematica Sinica, English Series, 36 (2020),
  pp.~512--534.

\bibitem{baptista2020adaptive}
{\sc R.~Baptista, O.~Zahm, and Y.~Marzouk}, {\em Adaptive transport maps for
  high-dimensional density estimation}.
\newblock (in preparation), 2020.

\bibitem{benzi1999comparative}
{\sc M.~Benzi and M.~T\r{u}ma}, {\em A comparative study of sparse approximate
  inverse preconditioners}, Appl. Numer. Math., 30 (1999), pp.~305--340,
  \url{https://doi.org/10.1016/S0168-9274(98)00118-4},
  \url{https://doi.org/10.1016/S0168-9274(98)00118-4}.
\newblock Iterative methods and preconditioners (Berlin, 1997).

\bibitem{beylkin1991fast}
{\sc G.~Beylkin, R.~Coifman, and V.~Rokhlin}, {\em Fast wavelet transforms and
  numerical algorithms. {I}}, Comm. Pure Appl. Math., 44 (1991), pp.~141--183,
  \url{https://doi.org/10.1002/cpa.3160440202}.

\bibitem{chandrasekaran2004fast}
{\sc S.~Chandrasekaran, M.~Gu, and T.~Pals}, {\em Fast and stable algorithms
  for hierarchically semi-separable representations}, submitted for
  publication,  (2004).

\bibitem{chow2014preconditioned}
{\sc E.~Chow and Y.~Saad}, {\em Preconditioned krylov subspace methods for
  sampling multivariate gaussian distributions}, SIAM Journal on Scientific
  Computing, 36 (2014), pp.~A588--A608.

\bibitem{coulier2016efficient}
{\sc P.~Coulier and E.~Darve}, {\em Efficient mesh deformation based on radial
  basis function interpolation by means of the inverse fast multipole method},
  Computer Methods in Applied Mechanics and Engineering, 308 (2016),
  pp.~286--309.

\bibitem{coulier2017inverse}
{\sc P.~Coulier, H.~Pouransari, and E.~Darve}, {\em The inverse fast multipole
  method: using a fast approximate direct solver as a preconditioner for dense
  linear systems}, SIAM Journal on Scientific Computing, 39 (2017),
  pp.~A761--A796.

\bibitem{Datta2016}
{\sc A.~Datta, S.~Banerjee, A.~O. Finley, and A.~E. Gelfand}, {\em
  {Hierarchical nearest-neighbor Gaussian process models for large
  geostatistical datasets}}, Journal of the American Statistical Association,
  111 (2016), pp.~800--812.

\bibitem{eguchi2006interpreting}
{\sc S.~Eguchi and J.~Copas}, {\em Interpreting {K}ullback-{L}eibler divergence
  with the {N}eyman-{P}earson lemma}, J. Multivariate Anal., 97 (2006),
  pp.~2034--2040, \url{https://doi.org/10.1016/j.jmva.2006.03.007},
  \url{https://doi.org/10.1016/j.jmva.2006.03.007}.

\bibitem{eremin1998factorized}
{\sc A.~Y. Eremin, L.~Y. Kolotilina, and A.~A. Nikishin}, {\em Factorized
  sparse approximate inverse preconditionings. {III}. {I}terative construction
  of preconditionings}, Zap. Nauchn. Sem. S.-Peterburg. Otdel. Mat. Inst.
  Steklov. (POMI), 248 (1998), pp.~17--48, 247,
  \url{https://doi.org/10.1007/BF02672769},
  \url{https://doi.org/10.1007/BF02672769}.

\bibitem{ferronato2015novel}
{\sc M.~Ferronato, C.~Janna, and G.~Gambolati}, {\em A novel factorized sparse
  approximate inverse preconditioner with supernodes}, Procedia Computer
  Science, 51 (2015), pp.~266--275.

\bibitem{fine2001efficient}
{\sc S.~Fine and K.~Scheinberg}, {\em Efficient {SVM} training using low-rank
  \\ kernel representations}, J. Mach. Learn. Res., 2 (2001), pp.~243--264.

\bibitem{fowlkes2004spectral}
{\sc C.~Fowlkes, S.~Belongie, F.~Chung, and J.~Malik}, {\em Spectral grouping
  using the {Nystr{\"o}m} method}, IEEE Trans. Pattern Anal. Mach. Intell., 26
  (2004), pp.~214--225.

\bibitem{furrer2012covariance}
{\sc R.~Furrer, M.~G. Genton, and D.~Nychka}, {\em Covariance tapering for
  interpolation of large spatial datasets}, J. Comput. Graph. Statist., 15
  (2006), pp.~502--523, \url{https://doi.org/10.1198/106186006X132178}.

\bibitem{gneiting2004stochastic}
{\sc T.~Gneiting and M.~Schlather}, {\em Stochastic models that separate
  fractal dimension and the hurst effect}, SIAM review, 46 (2004),
  pp.~269--282.

\bibitem{graham2018analysis}
{\sc I.~G. Graham, F.~Y. Kuo, D.~Nuyens, R.~Scheichl, and I.~H. Sloan}, {\em
  Analysis of circulant embedding methods for sampling stationary random
  fields}, SIAM Journal on Numerical Analysis, 56 (2018), pp.~1871--1895.

\bibitem{greengard1987fast}
{\sc L.~Greengard and V.~Rokhlin}, {\em A fast algorithm for particle
  simulations}, J. Comput. Phys., 73 (1987), pp.~325--348,
  \url{https://doi.org/10.1016/0021-9991(87)90140-9}.

\bibitem{guinness2016permutation}
{\sc J.~Guinness}, {\em {Permutation methods for sharpening Gaussian process
  approximations}}, Technometrics, 60 (2018), pp.~415--429,
  \url{https://doi.org/10.1080/00401706.2018.1437476}.

\bibitem{hackbusch1999sparse}
{\sc W.~Hackbusch}, {\em A sparse matrix arithmetic based on
  {$\mathcal{H}$}-matrices. {I}. {I}ntroduction to {$\mathcal{H}$}-matrices},
  Computing, 62 (1999), pp.~89--108,
  \url{https://doi.org/10.1007/s006070050015}.

\bibitem{hackbusch2002data}
{\sc W.~Hackbusch and S.~B{\"o}rm}, {\em Data-sparse approximation by adaptive
  {$\mathcal{H}^2$}-matrices}, Computing, 69 (2002), pp.~1--35,
  \url{https://doi.org/10.1007/s00607-002-1450-4}.

\bibitem{hackbusch2000sparse}
{\sc W.~Hackbusch and B.~N. Khoromskij}, {\em A sparse {$\mathcal{H}$}-matrix
  arithmetic. {II}. {A}pplication to multi-dimensional problems}, Computing, 64
  (2000), pp.~21--47.

\bibitem{ho2016hierarchical}
{\sc K.~L. Ho and L.~Ying}, {\em Hierarchical interpolative factorization for
  elliptic operators: integral equations}, Comm. Pure Appl. Math., 69 (2016),
  pp.~1314--1353, \url{https://doi.org/10.1002/cpa.21577},
  \url{https://doi.org/10.1002/cpa.21577}.

\bibitem{hofmann2008kernel}
{\sc T.~Hofmann, B.~Sch{\"o}lkopf, and A.~J. Smola}, {\em Kernel methods in
  machine learning}, Ann. Statist., 36 (2008), pp.~1171--1220,
  \url{https://doi.org/10.1214/009053607000000677}.

\bibitem{huang2020h2pack}
{\sc H.~Huang, X.~Xing, and E.~Chow}, {\em H2pack: High-performance h 2 matrix
  package for kernel matrices using the proxy point method}, ACM Transactions
  on Mathematical Software (TOMS), 47 (2020), pp.~1--29.

\bibitem{James1961}
{\sc W.~James and C.~Stein}, {\em {Estimation with quadratic loss}},
  Proceedings of the Fourth Berkeley Symposium on Mathematical Statistics and
  Probability, 1 (1961), pp.~361--379,
  \url{https://doi.org/10.1177/0278364907080252},
  \url{https://arxiv.org/abs/1312.5402}.

\bibitem{kaporin1990alternative}
{\sc I.~E. Kaporin}, {\em An alternative approach to the estimation of the
  number of iterations in the conjugate gradient method}, in Numerical methods
  and software ({R}ussian), Akad. Nauk SSSR, Otdel Vychisl. Mat., Moscow, 1990,
  pp.~55--72.

\bibitem{katzfuss2016multi}
{\sc M.~Katzfuss}, {\em A multi-resolution approximation for massive spatial
  datasets}, J. Amer. Stat. Assoc.,  (2016),
  \url{https://doi.org/10.1080/01621459.2015.1123632}.

\bibitem{Katzfuss2017b}
{\sc M.~Katzfuss and W.~Gong}, {\em {A class of multi-resolution approximations
  for large spatial datasets}}, Statistica Sinica, forthcoming (2019).

\bibitem{Katzfuss2017a}
{\sc M.~Katzfuss and J.~Guinness}, {\em {A general framework for Vecchia
  approximations of Gaussian processes}}, Statistical Science, forthcoming
  (2019), \url{http://arxiv.org/abs/1708.06302},
  \url{https://arxiv.org/abs/1708.06302}.

\bibitem{Katzfuss2018}
{\sc M.~Katzfuss, J.~Guinness, W.~Gong, and D.~Zilber}, {\em {Vecchia
  approximations of Gaussian-process predictions}}, arXiv:1805.03309,  (2018).

\bibitem{kaufman2008covariance}
{\sc C.~G. Kaufman, M.~J. Schervish, and D.~W. Nychka}, {\em Covariance
  tapering for likelihood-based estimation in large spatial data sets}, Journal
  of the American Statistical Association, 103 (2008), pp.~1545--1555.

\bibitem{kolotilina1993factorized}
{\sc L.~Y. Kolotilina and A.~Y. Yeremin}, {\em Factorized sparse approximate
  inverse preconditionings. {I}. {T}heory}, SIAM J. Matrix Anal. Appl., 14
  (1993), pp.~45--58, \url{https://doi.org/10.1137/0614004},
  \url{https://doi.org/10.1137/0614004}.

\bibitem{kornhuber2016analysis}
{\sc R.~Kornhuber and H.~Yserentant}, {\em An analysis of a class of
  variational multiscale methods based on subspace decomposition}, 2016.
\newblock \arXiv{1608.04081}.

\bibitem{li2012new}
{\sc S.~Li, M.~Gu, C.~J. Wu, and J.~Xia}, {\em New efficient and robust hss
  cholesky factorization of spd matrices}, SIAM Journal on Matrix Analysis and
  Applications, 33 (2012), pp.~886--904.

\bibitem{lindgren2011explicit}
{\sc F.~Lindgren, H.~Rue, and J.~Lindstr\"om}, {\em An explicit link between
  {G}aussian fields and {G}aussian {M}arkov random fields: the stochastic
  partial differential equation approach}, J. R. Stat. Soc. Ser. B Stat.
  Methodol., 73 (2011), pp.~423--498,
  \url{https://doi.org/10.1111/j.1467-9868.2011.00777.x}.

\bibitem{malqvist2014localization}
{\sc A.~M{\aa}lqvist and D.~Peterseim}, {\em Localization of elliptic
  multiscale problems}, Mathematics of Computation, 83 (2014), pp.~2583--2603.

\bibitem{marzouk2017}
{\sc Y.~Marzouk, T.~Moselhy, M.~Parno, and A.~Spantini}, {\em Sampling via
  measure transport: an introduction}, in Handbook of uncertainty
  quantification. {V}ol. 1, 2, 3, Springer, Cham, 2017, pp.~785--825.

\bibitem{matern1960spatial}
{\sc B.~Mat\'ern}, {\em Spatial {V}ariation: {S}tochastic {M}odels and {T}heir
  {A}pplication to {S}ome {P}roblems in {F}orest {S}urveys and {O}ther
  {S}ampling {I}nvestigations}, Meddelanden Fran Statens
  Skogsforskningsinstitut, Band 49, Nr.5, Stockholm, 1960.

\bibitem{owhadi2017universal}
{\sc H.~Owhadi and C.~Scovel}, {\em Universal scalable robust solvers from
  computational information games and fast eigenspace adapted multiresolution
  analysis}, 2017.
\newblock \arXiv{1703.10761}.

\bibitem{OwhScobook2018}
{\sc H.~Owhadi and C.~Scovel}, {\em Operator Adapted Wavelets, Fast Solvers,
  and Numerical Homogenization, from a game theoretic approach to numerical
  approximation and algorithm design}, Cambridge Monographs on Applied and
  Computational Mathematics, Cambridge University Press, 2019.

\bibitem{quinonero2005unifying}
{\sc J.~Qui{\~n}onero-Candela and C.~E. Rasmussen}, {\em A unifying view of
  sparse approximate {G}aussian process regression}, J. Mach. Learn. Res., 6
  (2005), pp.~1939--1959.

\bibitem{rasmussen2006gaussian}
{\sc C.~E. Rasmussen and C.~K.~I. Williams}, {\em Gaussian {P}rocesses for
  {M}achine {L}earning}, Adaptive Computation and Machine Learning, MIT Press,
  Cambridge, MA, 2006.

\bibitem{roininen2014whittle}
{\sc L.~Roininen, J.~M.~J. Huttunen, and S.~Lasanen}, {\em Whittle-{M}at\'ern
  priors for {B}ayesian statistical inversion with applications in electrical
  impedance tomography}, Inverse Probl. Imaging, 8 (2014), pp.~561--586,
  \url{https://doi.org/10.3934/ipi.2014.8.561}.

\bibitem{roininen2011correlation}
{\sc L.~Roininen, M.~S. Lehtinen, S.~Lasanen, M.~Orisp\"a\"a, and
  M.~Markkanen}, {\em Correlation priors}, Inverse Probl. Imaging, 5 (2011),
  pp.~167--184, \url{https://doi.org/10.3934/ipi.2011.5.167}.

\bibitem{roininen2013constructing}
{\sc L.~Roininen, P.~Piiroinen, and M.~Lehtinen}, {\em Constructing continuous
  stationary covariances as limits of the second-order stochastic difference
  equations}, Inverse Probl. Imaging, 7 (2013), pp.~611--647,
  \url{https://doi.org/10.3934/ipi.2013.7.611}.

\bibitem{sang2012full}
{\sc H.~Sang and J.~Z. Huang}, {\em A full scale approximation of covariance
  functions for large spatial data sets}, J. R. Stat. Soc. Ser. B. Stat.
  Methodol., 74 (2012), pp.~111--132,
  \url{https://doi.org/10.1111/j.1467-9868.2011.01007.x}.

\bibitem{sauter2010boundary}
{\sc S.~A. Sauter and C.~Schwab}, {\em Boundary {E}lement {M}ethods}, vol.~39
  of Springer Series in Computational Mathematics, Springer-Verlag, Berlin
  Heidelberg, 2011, \url{https://doi.org/10.1007/978-3-540-68093-2}.

\bibitem{Schafer2017}
{\sc F.~Sch{\"{a}}fer, T.~J. Sullivan, and H.~Owhadi}, {\em {Compression,
  inversion, and approximate PCA of dense kernel matrices at near-linear
  computational complexity}}, arXiv:1706.02205,  (2017),
  \url{http://arxiv.org/abs/1706.02205},
  \url{https://arxiv.org/abs/1706.02205}.

\bibitem{schwaighofer2002transductive}
{\sc A.~Schwaighofer and V.~Tresp}, {\em Transductive and inductive methods for
  approximate {G}aussian process regression}, in Advances in Neural Information
  Processing Systems 15, S.~Becker, S.~Thrun, and K.~Obermayer, eds., MIT
  Press, 2003, pp.~977--984.

\bibitem{smola2001sparse}
{\sc A.~J. Smola and P.~L. Bartlett}, {\em Sparse greedy {Gaussian} process
  regression}, in Advances in Neural Information Processing Systems, 2001,
  pp.~619--625.

\bibitem{snelson2005sparse}
{\sc E.~Snelson and Z.~Ghahramani}, {\em Sparse {G}aussian processes using
  pseudo-inputs}, in Advances in Neural Information Processing Systems 18,
  Y.~Weiss, P.~B. Sch\"{o}lkopf, and J.~C. Platt, eds., MIT Press, 2006,
  pp.~1257--1264,
  \url{http://papers.nips.cc/paper/2857-sparse-gaussian-processes-using-pseudo-inputs.pdf}.

\bibitem{stein2002fast}
{\sc M.~L. Stein}, {\em Fast and exact simulation of fractional brownian
  surfaces}, Journal of Computational and Graphical Statistics, 11 (2002),
  pp.~587--599.

\bibitem{stein2002screening}
{\sc M.~L. Stein}, {\em The screening effect in kriging}, Ann. Statist., 30
  (2002), pp.~298--323, \url{https://doi.org/10.1214/aos/1015362194}.

\bibitem{stein2011when}
{\sc M.~L. Stein}, {\em 2010 {R}ietz {L}ecture: {W}hen does the screening
  effect hold?}, Ann. Statist., 39 (2011), pp.~2795--2819,
  \url{https://doi.org/10.1214/11-AOS909},
  \url{https://doi.org/10.1214/11-AOS909}.

\bibitem{stein2004approximating}
{\sc M.~L. Stein, Z.~Chi, and L.~J. Welty}, {\em Approximating likelihoods for
  large spatial data sets}, J. R. Stat. Soc. Ser. B Stat. Methodol., 66 (2004),
  pp.~275--296, \url{https://doi.org/10.1046/j.1369-7412.2003.05512.x}.

\bibitem{Sun2016}
{\sc Y.~Sun and M.~L. Stein}, {\em {Statistically and computationally efficient
  estimating equations for large spatial datasets}}, Journal of Computational
  and Graphical Statistics, 25 (2016), pp.~187--208,
  \url{https://doi.org/10.1080/10618600.2014.975230}.

\bibitem{takahashi2017application}
{\sc T.~Takahashi, P.~Coulier, and E.~Darve}, {\em Application of the inverse
  fast multipole method as a preconditioner in a 3d helmholtz boundary element
  method}, Journal of Computational Physics, 341 (2017), pp.~406--428.

\bibitem{Vecchia1988}
{\sc A.~Vecchia}, {\em {Estimation and model identification for continuous
  spatial processes}}, Journal of the Royal Statistical Society, Series B, 50
  (1988), pp.~297--312.

\bibitem{williams2001using}
{\sc C.~K.~I. Williams and M.~Seeger}, {\em Using the {N}ystr\"{o}m method to
  speed up kernel machines}, in Advances in Neural Information Processing
  Systems 13, T.~K. Leen, T.~G. Dietterich, and V.~Tresp, eds., MIT Press,
  2001, pp.~682--688,
  \url{http://papers.nips.cc/paper/1866-using-the-nystrom-method-to-speed-up-kernel-machines.pdf}.

\bibitem{xia2010fast}
{\sc J.~Xia, S.~Chandrasekaran, M.~Gu, and X.~S. Li}, {\em Fast algorithms for
  hierarchically semiseparable matrices}, Numerical Linear Algebra with
  Applications, 17 (2010), pp.~953--976.

\bibitem{xing2020efficient}
{\sc X.~Xing, H.~Huang, and E.~Chow}, {\em Efficient construction of an {HSS}
  preconditioner for symmetric positive definite $\mathcal{H}^2$ matrices},
  arXiv preprint arXiv:2011.07632,  (2020).

\end{thebibliography}

\newpage
\appendix

\section{Computation of the KL-minimizer}

\subsection{Computation without aggregation}
\label{apssec:notAggregatedPattern}

Recall that we write $\I$ for the set indexing the degrees of freedom,  $\prec$ for a $r$-reverse-maximin ordering, and  $S = S_{\prec,\ell,\rho}$ for the associated sparsity pattern (which we assume to be fixed).  
Unless explicitly mentioned, we assume all matrices have rows and columns ordered according to $\prec$.
For $k \in \I$, we then write $s_{k} \defeq \left\{ \left(i,k\right): k \preceq i, \left(i,k\right) \in S\right\}$ for the sparsity set of the $k$-th column $L_{:,k}$ of $L$.
As before, $\mathbf{e}_k$ is the vector that is $1$ on the $k$-th coordinate and zero everywhere else. 

\begin{proof}[Proof of \cref{thm:repL}]
    By using the formula for the KL-divergence of two Gaussian random variables in \eqref{eqn:KLnormal}, we obtain
    \begin{align}
        L &=
        \argmin_{\hat{L} \in \SpSet} \left(
        \trace( \hat{L} \hat{L}^{\top} \KM ) - \logdet(\hat{L} \hat{L}^\top) - \logdet( \KM ) - N  \right) \\
        &= \argmin_{\hat{L} \in \SpSet} 
         \left( \trace(  \hat{L}^{\top} \KM \hat{L} ) 
        -\logdet(\hat{L} \hat{L}^\top) \right)\\
        &= \argmin_{\hat{L} \in \SpSet} 
        \sum \limits_{k=1}^N \left( \hat{L}_{s_k,k}^{\top} \KM_{s_k,s_k} \hat{L}_{s_k,k} 
        - 2 \log(\hat{L}_{k,k})\right).
    \end{align}   
    The $k$-th summand depends only on the $k$-th column of $\hat{L}$.
    Thus, taking the derivative with respect to the $k$-the column of $L$ and setting
    it to zero, we obtain 
    $\KM_{s_k,s_k} \hat{L}_{s_k,k} = \frac{ \mathbf{e}_1 }{\hat{L}_{k,k} } \Leftrightarrow 
    \hat{L}_{s_k,k}  = \frac{\KM_{s_k,s_k}^{-1} \mathbf{e}_1}{\hat{L}_{k,k}}$.
    Therefore, $\hat{L}_{s_k,k}$ can be written as $\lambda \KM^{-1}_{s_k,s_k} \mathbf{e}_1$ 
    for a $\lambda \in \Reals$.
    By plugging this ansatz into the equation, we obtain 
    $\lambda = \sqrt{\left( \KM_{s_k,s_k}^{-1} \mathbf{e}_1\right)_{1}} 
    = \sqrt{\mathbf{e}_1^{\top} \KM_{s_k,s_k}^{-1} \mathbf{e}_1}$ and hence 
    Equation~\eqref{eqn:defcolL}.
    By using dense Cholesky factorization to invert the $\KM_{s_k,s_k}$, the right-hand 
    side of Equation~\eqref{eqn:defcolL} can be computed in computational complexity 
    $\BigO\left( \#\left(s_k \right)^2\right)$ in space and 
    $\BigO\left( \#\left(s_k \right)^3\right)$ in time, from which follows the result.
\end{proof}
\cref{alg:notAggregated} is a direct implementation of the above formula.

\subsection{Computation for the aggregated sparsity pattern}
\label{apssec:aggregatedPattern}
We first introduce some additional notation, defined in terms of an $r$-maximin ordering $\prec$ (see \cref{apsec:proofs}) and aggregated sparsity set $S = \tilde{S}_{\prec,\ell,\rho,\lambda}$, which we assume to be fixed. 
As before, $\I$ is the index set keeping track over the degrees of freedom, and $\tilde{\I}$ is the index set indexing the supernodes.
For a matrix $A$ and sets of indices $\tilde{i}$ and $\tilde{j}$, we denote as the $A_{\tilde{i},\tilde{j}}$ the submatrix obtained by restricting the indices of $A$ to $\tilde{i}$ and $\tilde{j}$, and as $A_{\tilde{i},:}$ ($A_{:,\tilde{j}}$) the matrix obtained by only restricting the row (column) indices.
We adopt the convention of indexing having precedence over inversion, i.e. $A_{\tilde{i},\tilde{j}}^{-1} = (A_{\tilde{i},\tilde{j}})^{-1}$.
For a supernode $\tilde{k} \in \tilde{I}$ and a degree of freedom $j \in \I$, we write $j \in \tilde{k}$ if there exists a $k \leadsto \tilde{k}$ such that $k \preceq j$ and $(k,j) \in S$, and we accordingly form submatrices $A_{\tilde{i}, \tilde{j}} \defeq (A_{ij})_{i \in \tilde{i}, j \in \tilde{j}}$.
Note that by definition of the supernodes, we have $s_k \subset \tilde{k}$ for all $k \leadsto \tilde{k}$.
Since we assume the sparsity pattern $S$ to contain the diagonal, we furthermore have $k \leadsto \tilde{k} \Rightarrow k \in \tilde{k}$.

We first show how to efficiently compute the inverse Cholesky factor for the aggregated sparsity pattern (as has been observed before by \cite{ferronato2015novel}, and \cite{guinness2016permutation}).  
For $\tilde{k} \in \tilde{\I}$, we define $U^{\tilde{k}}$ as the unique upper triangular matrix such that $\KM_{\tilde{k},\tilde{k}} = U^{\tilde{k}} U^{\tilde{k},\top}$. $U^{\tilde{k}}$ can be computed in complexity $\O((\# \tilde{k})^3)$ in time and $\O((\# \tilde{k})^2)$ in space by computing the Cholesky factorization of $\KM_{\tilde{k},\tilde{k}}$ after reverting the ordering of its rows and columns, and then reverting the order of the rows and columns of the resulting Cholesky factor.
The upper triangular structure of $U^{\tilde{k}}$ implies the following properties
\begin{align}
\label{eqn:restrictUUT}
    \KM_{s_k, s_k} &=  U^{\tilde{k}}_{s_k,s_k} U^{\tilde{k},\top}_{s_k,s_k},&
    \quad U_{s_k, s_k}^{\tilde{k},-1} \mathbf{1} &= \frac{1}{U_{kk}^{\tilde{k}}} \mathbf{e}_1,\\
    \quad U_{s_k, s_k}^{\tilde{k},-\top} \mathbf{1} &= \left(U^{\tilde{k},-\top} \mathbf{e}_k\right)_{s_k, s_k},&
    \quad U_{s_k, s_k}^{\tilde{k},-1} v_{s_k} &= \left(U^{\tilde{k},-1} v\right)_{s_k},
\end{align}
where $v \in \Reals^{\tilde{k}}$ is chosen arbitrarily.
For any $k \leadsto \tilde{k}$, the first three properties above imply 
\begin{equation}
L^{\rho}_{:,k} = \frac{\KM_{s_k}^{-1} \mathbf{e}_1}{\sqrt{\mathbf{e}_1^{\top} \KM_{s_k}^{-1} \mathbf{e}_1}} = U^{\tilde{k},-\top}_{s_k,s_k}  \mathbf{e}_1
=U^{\tilde{k},-\top}  \mathbf{e}_k.
\end{equation}
Thus, computing the columns $L_{:,k}$ for all $k \leadsto \tilde{k}$ has computational complexity $\O((\# \tilde{k})^3)$ in time and $\O((\# \tilde{k})^2)$ in space.
\cref{alg:aggregated} implements the formulae derived above.

\subsection{GP regression in \texorpdfstring{$\O(N + \rho^{2d})$}. space complexity}
\label{apssec:linearMemory}

As mentioned in \cref{ssec:lowmemory}, for many important operations arising in GP regression, the inverse-Cholesky factors $L$ of the training covariance matrix need never be formed in full. 
Instead, matrix-vector multiplies with $L$ or $L^{\top}$, as well as the computation of the log-determinant of $L$ can be performed by computing the columns of $L$ in an arbitrary order, using them to update the result, and deleting them again.
For the example of computing the posterior mean $\mu$ and covariance $C$, this is done in \cref{alg:lowMemNotAggregated} (without aggregation) and~\ref{alg:lowMemAggregated} (with aggregation).
In \cref{apsec:sortSparse}, we show how to compute the reverse maximin ordering and aggregated sparsity pattern in space complexity $\O(N + \rho^{d})$, thus allowing the entire algorithm to be run in space complexity $\O(N + \rho^{d})$ when using the aggregated sparsity pattern.

\begin{figure}[t]
    \scriptsize
	\begin{minipage}[t]{6.0cm}
		\vspace{0pt}
		\begin{algorithm}[H]
			\textbf{Input:} $\K$, $\left\{x_i\right\}_{i \in \I}$, $\prec$, $S_{\prec,\ell,\rho}$\\
			\textbf{Output:} Cond. mean $\mu$ and cov. $C$\\
			\begin{algorithmic}[1]
    			\FOR{$k \in \I_{\Pred}$}
    			    \STATE $\mu_k \leftarrow 0$
    			\ENDFOR
    			\FOR{$i \in \I_{\Train}, j \in I_{\Pred}$}
    			    \STATE $(\KM_{\Train, \Pred})_{ij} \leftarrow \K(x_i,x_j)$
    			\ENDFOR
    			\FOR{$i \in \I_{\Pred}, j \in I_{\Pred}$}
    			    \STATE $(\KM_{\Pred, \Pred})_{ij} \leftarrow \K(x_i,x_j)$
    			\ENDFOR
    			\FOR{$k \in \I_{\Train}$}
    			    \FOR{ $i,j \in s_{k}$}
    			        \STATE $\left(\KM_{s_k,s_k}\right)_{ij} \leftarrow \K(x_i,x_j)$
    			    \ENDFOR
    			    \STATE $v \leftarrow \KM_{s_k,s_k}^{-1} \mathbf{e}_k$
    			    \STATE $v \leftarrow v / v_k$
    			    \STATE $\mu_{k,:} \leftarrow \mu_{k,:} + v_k \KM_{k,\Pred}$
    			    \STATE $B_{k,:} \leftarrow v^{\top} \KM_{\Train,\Pred}$
    			\ENDFOR
    			\STATE $C \leftarrow \KM_{\Pred,\Pred} - B^{\top} B$
    			\RETURN $\mu, C$
    		\end{algorithmic}
			\caption{\label{alg:lowMemNotAggregated}Without aggregation}
		\end{algorithm}
	\end{minipage}
	\scriptsize
	\begin{minipage}[t]{6.0cm}
		\vspace{0pt}
		\begin{algorithm}[H]
			\textbf{Input:}$\K$, $\left\{x_i\right\}_{i \in \I}$, $\prec$, $S_{\prec, \ell, \rho, \lambda}$\\
			\textbf{Output:} Cond. mean $\mu$ and cov. $C$\\
			\begin{algorithmic}[1]
			    \FOR{$k \in \I_{\Pred}$}
			        \STATE $\mu_k \leftarrow 0$
			    \ENDFOR
			    \FOR{$i \in \I_{\Train}, j \in I_{\Pred}$}
			        \STATE $(\KM_{\Train, \Pred})_{ij} \leftarrow \K(x_i,x_j)$
			    \ENDFOR
			    \FOR{$i \in \I_{\Pred}, j \in I_{\Pred}$}
			        \STATE $(\KM_{\Pred, \Pred})_{ij} \leftarrow \K(x_i,x_j)$
			    \ENDFOR
			    \FOR{$\tilde{k} \in \tilde{\I}$}
			        \FOR{ $i,j \in s_{\tilde{k}}$}
			            \STATE $\left(\KM_{s_{\tilde{k}},s_{\tilde{k}}}\right)_{ij} \leftarrow \K(x_i,x_j)$
			        \ENDFOR
			        \STATE $U \leftarrow P^{\updownarrow}\chol( P^{\updownarrow} K_{s_{\tilde{k}}, s_{\tilde{k}}} P^{\updownarrow})P^{\updownarrow}$
			        \FOR{$k \leadsto \tilde{k}$}
      			    \STATE $v \leftarrow U^{-\top} \mathbf{e}_k$
      			    \STATE $\mu_{k,:} \leftarrow \mu_{k,:} + v_k \KM_{k,\Pred}$
      			    \STATE $B_{k,:} \leftarrow v^{\top} \KM_{\Train,\Pred}$
			        \ENDFOR
			    \ENDFOR
			    $C \leftarrow \KM_{\Pred,\Pred} - B^{\top} B$\;
			    \RETURN $\mu, C$
			\end{algorithmic}
			\caption{\label{alg:lowMemAggregated} With aggregation}
		\end{algorithm}
	\end{minipage}
	\caption{Prediction and uncertainty quantification using KL-minimization with and without aggregation in $\O(N+\rho^{2\tilde{d}})$ memory complexity} 
\end{figure}

\section{Postponed proofs}
\label{apsec:proofs}
Our theoretical results apply to more general orderings, called reverse $r$-maximin orderings, which for $r \in (0,1]$ have the following property.
\begin{definition}
    \label{def:rmaximin}
    An elimination ordering $\prec$ is called reverse $r$-maximin with length scales $\{\ell_i\}_{i \in \I}$ if for every $j \in \I$ we have 
    \begin{equation}
    \ell_j \defeq \min_{i \succ j} \dist(x_j, \{x_i\} \cup \partial \Omega) \geq r \max_{j \succ k} \min_{i \succ j} \dist(x_k, \{x_i\} \cup \partial \Omega) .
    \end{equation}
\end{definition}
We note that the reverse maximin ordering from \cref{ssec:maximin} is a reverse $1$-maximin ordering;
reverse $r$-maximin orderings with $r<1$ can be computed in computational complexity $\O(N\log(N))$ (see \cref{apsec:sortSparse}).
We define the sparsity patterns $S_{\prec, \ell, \rho}$ and $\tilde{S}_{\prec, \ell, \rho, \lambda}$ analogously to the case of the reverse maximin ordering, and we will write  $L^{\rho}$ for the incomplete Cholesky factors of $\KM^{-1}$ computed using \cref{eqn:defcolL} based on the sparsity pattern $S_{\prec,\ell,\rho}$ or $\tilde{S}_{\prec,\ell,\rho,\lambda}$.

\subsection{Computational complexity}
Our estimates only depends on the the \emph{intrinsic dimension of the dataset} which is defined by counting the number of balls of radius $r$ can be fit into balls of radius $R$, for different $r, R > 0$.
\begin{condition}[Intrinsic dimension]
\label{cond:intrinsicDimension}
    We say that $\left\{x_i\right\}_{i \in \I} \subset \Reals^d$ has intrinsic dimension $\tilde{d}$ if there exists a constant $C_{\tilde{d}}$, independent of $N$, such that for all $r,R > 0$, $x \in \Reals^d$, we have
    \begin{equation}
        \max\left\{ |A|: i,j \in A \Rightarrow 
        \dist(x_i, x), \dist(x_j, x) \leq R, 
        \dist(x_i,x_j) \geq r  \right\} 
        \leq C_{\tilde{d}}\left( R/r\right)^{\tilde{d}}.
    \end{equation}
\end{condition}

\begin{remark}
	Note that the we always have $\tilde{d} \leq d$.
\end{remark}

We also make a mild technical assumption requiring that most of the points belong to the finer scales of the ordering:
\begin{condition}[Regular refinement]
    \label{cond:regref}
    We say that $\left\{x_i\right\}_{i \in \I} \subset \Reals^d$ fulfills the regular refinement condition for $\lambda$ and $\ell$ with constant $C_{\lambda,\ell}$, if 
   	\begin{equation*}
		\sum \limits_{k = \lfloor \log(\ell_1)/ \log(\lambda) \rfloor }^\infty \# \{ i: \lambda^{k} \leq \ell_i \} \leq C_{\lambda,\ell} N 
	\end{equation*}
\end{condition}
This condition excludes pathological cases like $x_{i} = 2^{-i}$ for which each scale contains the same number of points.

Analogously to the results of \cite{Schafer2017}, we obtain the following computational complexity:
\begin{theorem}
    Under \cref{cond:intrinsicDimension} with $C_{\tilde{d}}$ and  $\tilde{d}$,  using an $r$-reverse maximin ordering $\prec$ and  $S_{\prec,\ell,\rho}$, \cref{alg:notAggregated} computes $L^{\rho}$ in  complexity $CN\rho^{\tilde{d}}$ in space and $CN\rho^{3\tilde{d}}$ in time. 
    If we assume in addition that $\left\{x_i\right\}_{i \in \I}$ fulfills \cref{cond:regref} for $\lambda$ and $l$ with constant $C_{\lambda, \ell}$, then, using $\tilde{S}_{\prec,\ell,\rho,\lambda}$ or $\bar{S}_{\prec,\ell,\rho,\lambda}$, \cref{alg:aggregated}
    computes $L^{\rho}$ in complexity $CN\rho^{\tilde{d}}$ in space and $C_{\lambda, \ell}CN\rho^{2\tilde{d}}$ in time. 
    Here, the constant $C$ depends only on $C_{\tilde{d}}$, $\tilde{d}$, $r$, $\lambda$, and the maximal cost of evaluating a single entry of $\KM$, but not on $N$ or $d$.
\end{theorem}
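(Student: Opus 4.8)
The plan is to reduce both assertions to packing estimates for the point set. I will write $C$ for a constant that depends only on $C_{\tilde d}$, $\tilde d$, $r$ and $\lambda$ (changing from line to line) and absorb the $\O(1)$ cost of a single entry evaluation of $\KM$ into it. The geometric ingredient is a separation property of the ordering: for every $k$, the points $\{x_i\}_{i \succeq k}$ are pairwise separated by $\gtrsim r\ell_k$. Indeed, by \cref{def:rmaximin}, each such point, when it was appended to $\{x_i\}_{i \succ j}$ during the reverse construction, lay at distance $\geq r\ell_k$ from all points already present, since $\ell_i \geq \ell_k$ whenever $i \succeq k$. Now $s_k$ consists of those $\{x_i\}_{i \succeq k}$ lying in the ball of radius $\rho\ell_k$ about $x_k$, so \cref{cond:intrinsicDimension} applied with $R = \rho\ell_k$ and $r' \asymp r\ell_k$ gives $\# s_k \leq C\rho^{\tilde d}$, uniformly in $k$.

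Granting this, the non-aggregated bounds are immediate from the counts in \cref{thm:repL}. The space $\#S + (\max_k \# s_k)^2$ is at most $\sum_k \# s_k + C^2\rho^{2\tilde d} \leq CN\rho^{\tilde d} + C^2\rho^{2\tilde d} = \O(N\rho^{\tilde d})$, the second term being lower order once $\rho^{\tilde d} \lesssim N$, and the time $\sum_k (\# s_k)^3 \leq N (C\rho^{\tilde d})^3 = \O(N\rho^{3\tilde d})$. The same separation argument bounds the supernodes: $s_{\tilde k} = \bigcup_{k \leadsto \tilde k} s_k$ is contained in a ball of radius $\lesssim (1+\lambda)\rho\ell_{\tilde k}$ about the anchor and is $\gtrsim r\ell_{\tilde k}$-separated (being a subset of $\{x_i\}_{i \succeq k}$ for the finest parent $k$), so $\# \tilde k = \# s_{\tilde k} \leq C\rho^{\tilde d}$, and likewise each supernode has at most $C\rho^{\tilde d}$ parents.

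For \cref{alg:aggregated} the work splits into one dense Cholesky factorization of $\KM_{\tilde k, \tilde k}$ per supernode, costing $\O((\# \tilde k)^3)$, and one triangular solve per parent, costing $\O((\# \tilde k)^2)$. The solve contribution is controlled immediately: since every index is a parent of exactly one supernode, $\sum_{\tilde k} \#\{\text{parents of }\tilde k\} (\# \tilde k)^2 \leq C^2\rho^{2\tilde d} \sum_{\tilde k} \#\{\text{parents of }\tilde k\} = C^2\rho^{2\tilde d} N = \O(N\rho^{2\tilde d})$; storing $L^\rho$ on $S$ together with $\O(\rho^{2\tilde d})$ working memory for the current factor gives the space bound. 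The crux, and the only place \cref{cond:regref} enters, is the Cholesky contribution $\sum_{\tilde k} (\# \tilde k)^3$: bounding $\# \tilde k \leq C\rho^{\tilde d}$ and the number of supernodes only by $N$ would yield $\O(N\rho^{3\tilde d})$ with no gain, so the improvement to $\rho^{2\tilde d}$ requires showing the number of supernodes is $\O(C_{\lambda,\ell} N\rho^{-\tilde d})$, after which $\sum_{\tilde k}(\# \tilde k)^3 \leq C_{\lambda,\ell} N\rho^{-\tilde d} (C\rho^{\tilde d})^3 = \O(C_{\lambda,\ell} N\rho^{2\tilde d})$.

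To count the supernodes I would group them by the dyadic-in-$\lambda$ band $\ell_{\tilde k} \in [\lambda^m, \lambda^{m+1})$ of their anchor. The key observation is that two distinct anchors of the same scale must lie at distance $\gtrsim \rho\lambda^m$: whichever is processed first would otherwise have aggregated the other as a parent rather than leaving it to seed a new supernode. Combining this separation with the $\gtrsim \lambda^m$-separation of the $P_m \defeq \#\{i : \ell_i \geq \lambda^m\}$ coarse points and \cref{cond:intrinsicDimension} should yield, by a charging argument, that the number $A_m$ of scale-$m$ supernodes satisfies $A_m \leq C\rho^{-\tilde d} P_m$; summing over $m$ and invoking \cref{cond:regref} in the form $\sum_m P_m \leq C_{\lambda,\ell} N$ then gives $\sum_m A_m \leq C_{\lambda,\ell} C N\rho^{-\tilde d}$ supernodes, as required. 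I expect this last scale-by-scale packing step — turning the ``distinct same-scale anchors are far apart'' observation into the uniform bound $A_m \lesssim \rho^{-\tilde d} P_m$ — to be the main obstacle, exactly as in the analogous argument of \cite{Schafer2017}; the pathological configuration $x_i = 2^{-i}$, for which each scale holds a single point and no aggregation gain is possible, is precisely the case in which $\sum_m P_m$ fails to be $\O(N)$ and which \cref{cond:regref} rules out.
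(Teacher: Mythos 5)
Your reduction is sound up to, and including, the solve contribution: the separation-plus-packing bound $\# s_k \leq C\rho^{\tilde{d}}$, the non-aggregated space and time counts, the supernode-size bound, and the accounting $\sum_{\tilde{k}} \#\{\text{parents of } \tilde{k}\}\,(\# \tilde{k})^2 \leq C\rho^{2\tilde{d}} N$ all match the paper's proof. The genuine gap is exactly the step you flag as the main obstacle, and it is worse than hard: the lemma you reduce the Cholesky contribution to --- that the number of supernodes is $\mathcal{O}(C_{\lambda,\ell} N \rho^{-\tilde{d}})$, via the per-band bound $A_m \leq C\rho^{-\tilde{d}} P_m$ --- is \emph{false} under the stated hypotheses. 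Both \cref{cond:intrinsicDimension} and \cref{cond:regref} bound the richness of the point set from \emph{above}; neither provides the \emph{lower} bound on local density that a charging argument needs (each anchor would have to own about $\rho^{\tilde{d}}$ same-scale points), so nothing prevents order-$N$ many \emph{singleton} supernodes. Concretely, for $d = \tilde{d} = 2$ place $N/2$ sites on a grid of spacing $D$ in $[0,1]^2$, and at each site a pair of points at distance $\epsilon = D/(\lambda\rho)$. Every fine pair-member then has $\ell_i = \epsilon$; distinct fine members are at distance at least $D - 2\epsilon$, which exceeds $\rho\epsilon$ once $\rho(\lambda-1) > 2$, and each one's coarse partner is excluded from its supernode by the scale cutoff $\ell_j \leq \lambda \ell_i$ (since $D = \lambda\rho\epsilon > \lambda\epsilon$). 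Hence all $N/2$ fine members seed singleton supernodes, while \cref{cond:regref} holds with $C_{\lambda,\ell} = \mathcal{O}(\log_\lambda \rho)$; for large $\rho$ this contradicts the claimed count, and at the band containing $\epsilon$ one has $A_m = N/2$ while $P_m = N$, contradicting $A_m \leq C\rho^{-\tilde{d}}P_m$.

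The repair --- and the paper's actual argument --- is to weight supernodes by their size rather than count them: the quantity to bound is the total number of child--parent incidences $\sum_{\tilde{k}} \# \tilde{k}$. For a fixed index $i$, the anchors of supernodes having $i$ as a child and lying in a fixed scale band $[\lambda^m, \lambda^{m+1})$ are contained in a ball of radius $C(\lambda)\rho\lambda^m$ around $x_i$ and are pairwise $\rho\lambda^m$-separated (your own same-scale separation observation), so \cref{cond:intrinsicDimension} caps their number by a constant $C$ \emph{independent of} $\rho$; moreover, $i$ can only be a child of supernodes whose band satisfies $\lambda^m \leq C\ell_i$. Summing over $i$ and over bands and invoking \cref{cond:regref} exactly as you do then gives $\sum_{\tilde{k}} \# \tilde{k} \leq C \sum_m P_m \leq C C_{\lambda,\ell} N$. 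The Cholesky work is finally controlled by $\sum_{\tilde{k}} (\# \tilde{k})^3 \leq \bigl(\max_{\tilde{k}} \# \tilde{k}\bigr)^2 \sum_{\tilde{k}} \# \tilde{k} \leq C\rho^{2\tilde{d}} \cdot C C_{\lambda,\ell} N$, a bound that is insensitive to the number of supernodes: singletons cost $\mathcal{O}(1)$ each and are harmless. So your scale-by-scale decomposition and use of \cref{cond:regref} are the right tools aimed at the wrong quantity; replacing ``number of supernodes'' by ``number of incidences'' closes the gap.
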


\begin{proof}
    We begin by showing that the number of nonzero entries of an arbitrary column of $S_{\prec, \ell, \rho}$ is bounded above as $C\rho^{\tilde{d}}$.
    Considering the $i$-th column, the reverse $r$-maximin ordering ensures that for all $j,k \succ i$, we have $\dist(x_j, x_i) \geq r \ell_i$.                    
    Since for all $(i,j) \in S_{\prec, \ell, \rho}$ we have $i \prec j$ and $\dist(x_i,x_j) \leq \rho \ell_i$, \cref{cond:intrinsicDimension} implies that $\# \left\{ j : (i,j) \in S_{\prec,\ell,\rho} \right\} \leq C_{\tilde{d}} \left( \frac{\rho \ell_i}{r \ell_i} \right)^{\tilde{d}}$.
    Computing the $i$-th column of $L^\rho$ requires the inversion of the Matrix $\KM_{s_i,s_i}$ which can be done in computational complexity $C \rho^{\tilde{3d}}$, leaving us with a total time complexity of $CN\rho^{\tilde{d}}$.
	We now want to bound the computational complexity when using the aggregated sparsity patterns $\tilde{S}_{\prec,\ell,\rho,\lambda}$ or 
	$\bar{S}_{\prec,\ell,\rho,\lambda}$.
    As before, we write $j \in s$ if $j$ is a child of the supernode $s$, that is if there exists a $i \leadsto s$ such that $(i,j)$ is contained in $\tilde{S}_{\prec,\ell,\rho,\lambda}$ or $\bar{S}_{\prec,\ell,\rho,\lambda}$. 
    We write $\# s$ to denote the number of children of $s$.
	By the same argument as above, the number of \emph{children} in each supernode $s$ is bounded by $C\rho^{\tilde{d}}$. 
	We now want to show that the sum of the numbers of children of all supernodes is bounded as $CN$.
	For a supernode $s$ we write $\sqrt{s} \in \I$ to denote the index that was first added to the supernode (see the construction described in \cref{ssec:aggregated}).
	We now observe that for two distinct supernodes $s$ and $t$ with $c \leq \ell_{\sqrt{s}}, \ell_{\sqrt{t}} \leq c \lambda$, we have $\dist(x_{\sqrt{s}}, x_{\sqrt{t}}) \geq c \rho$, since otherwise we would have either $\sqrt{s} \leadsto t$ or$\sqrt{t} \leadsto s$.
	Thus, for every index $i \in \I$ and $k \in \mathbb{Z}$, there exist at most $C$ supernodes $s$ with $i \in s$, $\lambda^k \leq \ell_{\sqrt{s}} < \lambda^{k+1}$. 
	By using \cref{cond:regref}, we thus obtain
	\begin{align*}
		&\sum \limits_{s \in \tilde{\I}} \# s 
		= \sum \limits_{i \in \I} \# \left\{ s \in \tilde{I} : i \in s\right\} 
		= \sum \limits_{k \in \mathbb{Z}} \sum \limits_{i \in \I} \# \left\{ s \in \tilde{s} : i \in s, \lambda^{k} \leq \ell_{\sqrt{s}} < \lambda^{k+1} \right\} \\
		&\leq \sum \limits_{k \in \mathbb{Z}} \sum \limits_{i\in \I: \ell_i \geq \lambda^k} C
		\leq N C.
	\end{align*}
	We now know that there are at most $CN$ child-parent relationships between indices and supernodes and that each supernode can have at most $C\rho^{\tilde{d}}$ children.
	The worst case is thus that we have $CN/\rho^{\tilde{d}}$ supernodes, each having $C\rho^{\tilde{d}}$ children. 
	This leads to the bounds on time-- and space complexity of the algorithm. 
\end{proof}

\subsection{Approximation accuracy}
Our goal is to prove the following theorem:
\begin{theorem}
\label{thm:ap_accuracy}
Using an $r$-maximin ordering $\prec$ and sparsity patterns $S_{\prec, \ell, \rho}$ or $\tilde{S}_{\prec, \ell, \rho, \lambda}$, there exists a constant $C$ depending only on $d$, $\Omega$, $r$, $\lambda$, $s$, $\|\IK\|$, $\|\IK^{-1}\|$, and $\delta$, such that  for $\rho \geq C \log(N/\epsilon)$, we have 
\begin{equation}
\textstyle    \KL*{\N\left(0,\KM\right)}{ \N(0, \left(L^\rho L^{\rho,\top})^{-1}\right)}  
   \, + \, \left\| \KM - ( L^{\rho} L^{\rho,\top} )^{-1} \right\|_{\FRO}
   \, \leq  \,\epsilon.
\end{equation}
Thus, \cref{alg:notAggregated} computes an $\epsilon$-accurate approximation of $\KM$ in computational complexity $CN\log^d( N/\epsilon )$ in space and $CN \log^{3d}( N/\epsilon )$ in time, from  $CN\log^d( N/\epsilon )$ entries of $\KM$.
Similarly, \cref{alg:aggregated} computes an $\epsilon$-accurate approximation of $\KM$ in computational complexity $CN\log^d( N/\epsilon )$ in space and $CN \log^{2d}( N/\epsilon )$  in time, from $CN\log^d( N/\epsilon )$ entries of $\KM$.
\end{theorem}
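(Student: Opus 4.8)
The plan is to use the KL-optimality of $L^{\rho}$ from \cref{thm:repL} to reduce the entire error analysis to a single explicitly computable quantity, and then to import the exponential localization of the inverse-Cholesky factors of $\KM$ established in \cite{Schafer2017,owhadi2017universal,OwhScobook2018}. Write $\KM^{\rho} \defeq (L^{\rho} L^{\rho,\top})^{-1}$. Reusing the column-wise decomposition from the proof of \cref{thm:repL}, I would first observe that the minimal value of the $k$-th column subproblem is attained at $L^{\rho}_{kk} = \sqrt{((\KM_{s_k,s_k})^{-1})_{kk}}$, at which the quadratic term equals $1$; substituting this into \eqref{eqn:KLnormal} collapses the trace term to $N$ and yields the closed form
\begin{equation*}
  2\,\KL*{\N(0,\KM)}{\N(0,\KM^{\rho})} \;=\; \sum_{k \in \I} \log\frac{d_k^{\rho}}{d_k},
\end{equation*}
where $d_k$ is the variance of $x_k$ conditional on $\{x_j\}_{j\succ k}$ (so that $\logdet \KM = \sum_k \log d_k$) and $d_k^{\rho} = 1/((\KM_{s_k,s_k})^{-1})_{kk}$ is the variance conditional on the smaller set $\{x_j : j\in s_k,\ j \succ k\}$. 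Since the latter conditions on a subset, $d_k^{\rho}\ge d_k$ and every summand is nonnegative.

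Next I would control each ratio via the screening effect. The conditioning set $s_k$ contains all previously ordered points within distance $\rho\ell_k$ of $x_k$, and the gamblet localization results, read through their Cholesky-factor interpretation, say that conditioning on this $\rho\ell_k$-ball recovers all but an exponentially small fraction of the variance reduction: $d_k^{\rho}/d_k \le 1 + C e^{-\gamma\rho}$, with $C,\gamma$ depending only on the quantities listed in the statement. Hence $\log(d_k^{\rho}/d_k)\le Ce^{-\gamma\rho}$, and summing over the $N$ indices gives $\KL \le CN e^{-\gamma\rho}$. Because the aggregated pattern satisfies $\tilde S_{\prec,\ell,\rho,\lambda}\supseteq S_{\prec,\ell,\rho}$, it only enlarges each $s_k$, so its conditional variances are no larger and the same KL bound holds verbatim for the aggregated factor.

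For the Frobenius term I would symmetrize: let $\tilde M \defeq \KM^{1/2} L^{\rho} L^{\rho,\top}\KM^{1/2}$, which is symmetric positive definite with eigenvalues $\lambda_1,\dots,\lambda_N$ and $2\KL = \sum_i(\lambda_i - \log\lambda_i - 1)$. Once $\rho$ is large enough that this sum is at most $1$, every $\lambda_i$ is trapped in a fixed interval, say $[\tfrac12,2]$, where the elementary inequality $(1-1/\lambda)^2 \le C(\lambda - \log\lambda - 1)$ holds. Since $\KM - \KM^{\rho} = \KM^{1/2}(\Id - \tilde M^{-1})\KM^{1/2}$, this gives
\begin{equation*}
  \|\KM - \KM^{\rho}\|_{\FRO} \;\le\; \|\KM\|_2\,\|\Id - \tilde M^{-1}\|_{\FRO} \;\le\; C\,\|\KM\|_2\,\sqrt{\KL}.
\end{equation*}
Because $s>d/2$ forces the Green's function to be bounded (Sobolev embedding, as used in the construction of $\K$), $\|\KM\|_2 \le \|\KM\|_{\FRO} \le C N$, so both error terms are at most $CN^{3/2}e^{-\gamma\rho/2}$; choosing $\rho \ge C\log(N/\epsilon)$ with $C$ large enough to absorb the polynomial-in-$N$ prefactor makes the sum at most $\epsilon$.

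Finally, substituting $\rho = C\log(N/\epsilon)$ into the computational-complexity estimate proved above turns the bounds $N\rho^{\tilde d}$, $N\rho^{2\tilde d}$, and $N\rho^{3\tilde d}$ into the claimed $N\log^{d}(N/\epsilon)$, $N\log^{2d}(N/\epsilon)$, and $N\log^{3d}(N/\epsilon)$ (using $\tilde d \le d$), while the number of accessed entries $\#S = \BigO(N\rho^{\tilde d})$ becomes $\BigO(N\log^{d}(N/\epsilon))$. The hard part will be the screening estimate $d_k^{\rho}/d_k \le 1 + Ce^{-\gamma\rho}$: it is the genuinely analytic input, and it is exactly here that one must invoke the localization theory of \cite{owhadi2017universal,OwhScobook2018} together with its Cholesky-factor reading from \cite{Schafer2017}, taking care that the constants $C,\gamma$ depend only on $d,\Omega,r,\lambda,s,\|\IK\|,\|\IK^{-1}\|,\delta$ and not on $N$.
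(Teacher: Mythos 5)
There is a genuine gap at the one step you yourself flag as ``the hard part,'' and unfortunately it is the step that carries the whole proof. The per-column screening estimate $d_k^{\rho}/d_k \leq 1 + Ce^{-\gamma\rho}$, with $C,\gamma$ independent of $N$, is not a quotable result: nothing in \cite{Schafer2017,owhadi2017universal,OwhScobook2018} states a bound on the ratio of conditional variances given the truncated conditioning set $s_k$ versus the full set $\{j \succ k\}$. What that literature provides --- and what the paper imports as \cref{thm:accIchol} --- is a \emph{global, precision-side, Frobenius-norm} bound on the truncation $L^S$ of the exact Cholesky factor of $\IKM = \KM^{-1}$, namely $\|\IKM - L^S L^{S,\top}\|_{\FRO} \leq \epsilon$ once $\rho \geq C\log(N/\epsilon)$. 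Passing from this to your per-column conditional-variance statement is a proof obligation, not a citation, and it is precisely the obligation the paper's argument is designed to sidestep: the paper uses $L^S$ as a \emph{feasible competitor} in the variational problem, so that by the KL-optimality of \cref{thm:repL}, $\KL*{\N(0,\KM)}{\N(0,(L^{\rho}L^{\rho,\top})^{-1})} \leq \KL*{\N(0,\KM)}{\N(0,(L^{S}L^{S,\top})^{-1})}$, and then a comparison lemma (\cref{lem:compKLFRO}), combined with polynomial bounds on $\lambda_{\max}(\KM)$ and $\lambda_{\min}(\KM)^{-1}$ from \cite{Schafer2017}, converts the known Frobenius bound on the competitor into the needed KL bound (and back). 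Note that even this route only yields your per-column bound \emph{a posteriori} in the weaker form $\log(d_k^{\rho}/d_k) \leq \mathrm{poly}(N)\,e^{-\gamma\rho}$; the $N$-independent constant you postulate is stronger than anything the cited theory delivers, which is a further sign that the estimate is being assumed rather than proved.

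The rest of your proposal is correct, and parts of it are cleaner than the paper's own proof. The identity $2\,\KL*{\N(0,\KM)}{\N(0,(L^{\rho}L^{\rho,\top})^{-1})} = \sum_k \log\bigl(d_k^{\rho}/d_k\bigr)$ is valid (the trace term does collapse to $N$ at the column-wise optimum), the nonnegativity of each summand and the monotonicity under pattern enlargement $\tilde{S}_{\prec,\ell,\rho,\lambda} \supseteq S_{\prec,\ell,\rho}$ are correct, and your Frobenius step --- trapping the eigenvalues of $\KM^{1/2}L^{\rho}L^{\rho,\top}\KM^{1/2}$ near $1$ once the KL is small and using $(1-1/\lambda)^2 \leq C(\lambda - \log\lambda - 1)$ on a compact interval --- is sound. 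Indeed it directly bounds the covariance error $\|\KM - (L^{\rho}L^{\rho,\top})^{-1}\|_{\FRO}$ asserted in the theorem, whereas the paper's proof bounds the precision-side error $\|\IKM - L^{\rho}L^{\rho,\top}\|_{\FRO}$ and leaves the conversion implicit. If you replace the screening postulate by the competitor argument above (your decomposition then becomes an optional remark rather than the engine of the proof), the argument closes, and your Frobenius and complexity steps can be kept essentially verbatim.
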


\cite{Schafer2017} prove that under the conditions of \cref{thm:accuracy} the Cholesky factor of $\IKM=\KM^{-1}$ decays exponentially away from the diagonal.
\begin{theorem}[{\cite[Thm.~4.1]{Schafer2017}}]
    \label{thm:accIchol}
	In the setting of \cref{thm:accuracy}, there exists a constant $C$ depending only on $\delta, r, d, \Omega, s, \|\IK\|$, and $\|\IK^{-1}\|$, such that for $\rho \geq C \log(N/\epsilon)$,
	\begin{equation}
		S \supset \{ (i,j) \in \I \times \I : \dist(x_i, x_j) \leq \rho \min(\ell_i, \ell_j)\}
	\end{equation}
	and 
	\begin{equation}
		L^S_{ij} \defeq 
		\begin{cases}
			\big(\chol(\IKM)\big)_{ij}, & (i,j) \in S, \\
			0, & \text{otherwise,}
		\end{cases}
	\end{equation}
	we have $\left\|\IKM - L^S L^{S,\top}\right\|_{\FRO} \leq \epsilon$.
\end{theorem}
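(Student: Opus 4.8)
Write $\IKM = \KM^{-1}$ and let $L \defeq \chol(\IKM)$ be the \emph{exact} Cholesky factor, so $\IKM = L L^{\top}$ with $L$ lower triangular in $\prec$, and $L^S$ its truncation to $S$. The plan begins with the algebraic identity
\begin{equation}
  \IKM - L^S L^{S,\top} = (L - L^S) L^{\top} + L^S (L - L^S)^{\top},
\end{equation}
which gives $\| \IKM - L^S L^{S,\top}\|_{\FRO} \leq \|L - L^S\|_{\FRO}\big(2\|L\|_{2} + \|L - L^S\|_{\FRO}\big)$. Since $\|L\|_2 = \|\IKM\|_2^{1/2} = \|\KM^{-1}\|_2^{1/2}$ grows at most polynomially in $N$ (the extreme eigenvalues of $\KM$ are controlled by $\|\IK\|$, $\|\IK^{-1}\|$, $s$, $d$ and the homogeneity $\delta$), it suffices to show that the discarded entries of $L$ are exponentially small in $\rho$ after paying a fixed polynomial-in-$N$ price. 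The whole statement thus reduces to an entrywise exponential-decay estimate for the columns of the exact $L$.

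\textbf{Gamblet interpretation of the columns of $L$.} Fixing a column $j$ and setting $J_j \defeq \{k : k \succeq j\}$, the exact-conditioning case of \cref{thm:repL} gives $L_{J_j, j} = \KM_{J_j,J_j}^{-1}\mathbf{e}_1 / \sqrt{\mathbf{e}_1^\top \KM_{J_j,J_j}^{-1}\mathbf{e}_1}$. Writing $v_k \defeq \IK^{-1}\updelta_{x_k}$, the relation $\KM_{kl} = \int v_k \IK v_l \dx$ identifies $\KM_{J_j,J_j}$ as the Gram matrix of $\{v_k\}_{k\in J_j}$ in the energy inner product $\int \cdot\, \IK\, \cdot \dx$. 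Hence $c \defeq \KM_{J_j,J_j}^{-1}\mathbf{e}_1$ is the coefficient vector of the function $\phi_j \defeq \sum_{k\in J_j} c_k v_k$ characterized by $\phi_j(x_j)=1$, $\phi_j(x_k)=0$ for $k \in J_j\setminus\{j\}$, and minimal energy among such functions; that is, $\phi_j$ is exactly the operator-adapted wavelet (gamblet) centered at $x_j$, with $\IK\phi_j = \sum_k c_k \updelta_{x_k}$. Consequently the Cholesky entry equals a rescaled charge, $L_{kj} = c_k/\sqrt{c_j}$, and decay of $L_{kj}$ is equivalent to decay of the charges of a gamblet.

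\textbf{Decay and summation.} I would then invoke the localization theory of \cite{owhadi2017universal, OwhScobook2018}, which shows these energy-minimizing gamblets are exponentially localized uniformly in $N$, and convert this into charge decay by testing $\IK\phi_j = \sum_k c_k \updelta_{x_k}$ against localized functions while using the $r$-maximin separation to isolate individual charges, yielding $|L_{kj}| \leq C_1 \ell_j^{-\beta}\exp(-\dist(x_j,x_k)/(C_2\ell_j))$ for a power $\beta=\beta(s,d)$. The entries removed by truncation are those with $\dist(x_j,x_k)>\rho\ell_j$ (note $\min(\ell_j,\ell_k)=\ell_j$ since $k\succeq j$ forces $\ell_k\geq\ell_j$). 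Grouping the coarser points into dyadic annuli and bounding their number per annulus by $Cm^{\tilde d}$ via \cref{cond:intrinsicDimension},
\begin{equation}
  \sum_{k:\,\dist(x_j,x_k)>\rho\ell_j} |L_{kj}|^2 \leq C_1^2\,\ell_j^{-2\beta}\sum_{m>\rho} C m^{\tilde d}\exp(-2m/C_2) \leq C_3\,\ell_j^{-2\beta}\exp(-\rho/C_4).
\end{equation}
Summing over the $N$ columns and using the homogeneity $\delta$ to bound $\ell_j^{-2\beta}\leq \ell_{\min}^{-2\beta}\leq CN^{2\beta/\tilde d}$ gives $\|L-L^S\|_{\FRO}^2 \leq CN^{1+2\beta/\tilde d}\exp(-\rho/C_4)$, which together with the polynomial bound on $\|L\|_2$ from the reduction step yields $\|\IKM - L^S L^{S,\top}\|_{\FRO}\leq\epsilon$ once $\rho \geq C\log(N/\epsilon)$.

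\textbf{Main obstacle.} The crux lies in the middle steps: rigorously transferring the \emph{energy-norm} localization of the gamblets $\phi_j$ into \emph{entrywise} exponential decay of the charges $c_k = L_{kj}\sqrt{c_j}$, with every algebraic prefactor controlled by a fixed power of $N$. This demands the uniform (in $N$) well-conditioning of the multiresolution gamblet transform and careful bookkeeping of the normalization $1/\sqrt{c_j}$ and of the operator norms $\|\IK\|,\|\IK^{-1}\|$; it is precisely the accumulation of these polynomial-in-$N$ factors that forces the $\log(N/\epsilon)$, rather than $\log(1/\epsilon)$, scaling of $\rho$.
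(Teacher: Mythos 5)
The paper does not prove this statement at all: it is imported verbatim from \cite[Thm.~4.1]{Schafer2017}, and the ``proof'' consists of that citation. Your task was therefore effectively to reconstruct the cited argument, and your outline does follow the same route as that reference: the splitting $\IKM - L^S L^{S,\top} = (L-L^S)L^{\top} + L^S(L-L^S)^{\top}$ together with polynomial-in-$N$ control of the extreme eigenvalues (this is \cite[Thm.~3.16]{Schafer2017}, the very bound the present paper invokes in its proof of \cref{thm:ap_accuracy}); the identification of the $j$-th column of $\chol(\IKM)$ via the exact-pattern case of \cref{thm:repL} with the rescaled charges $c_k/\sqrt{c_j}$ of the minimal-energy interpolant $\phi_j$ constrained at $\{x_k\}_{k \succeq j}$ (your representer-theorem computation and the monotonicity $k \succeq j \Rightarrow \ell_k \geq r\,\ell_j$, so that the truncated entries are exactly those with $\dist(x_j,x_k) > \rho \ell_j$, are both correct); exponential localization from \cite{owhadi2017universal,OwhScobook2018}; annulus summation under \cref{cond:intrinsicDimension}; and absorption of all polynomial prefactors into $\rho \geq C\log(N/\epsilon)$. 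This is precisely the architecture of the proof of \cite[Thm.~4.1]{Schafer2017}.

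As a standalone proof, the one genuine hole is the step you flag yourself: the entrywise bound $|L_{kj}| \leq C_1 \ell_j^{-\beta} \exp\bigl(-\dist(x_j,x_k)/(C_2\ell_j)\bigr)$ is asserted, not derived, and it carries essentially all of the difficulty. Note in particular that the localization theorems of \cite{owhadi2017universal,OwhScobook2018} are proved for gamblets attached to a \emph{dyadic} hierarchy of measurement scales, whereas your $\phi_j$ carries constraints at the continuum of scales $\{\ell_k\}_{k \succeq j}$ generated by the reverse-maximin ordering; bridging the two --- grouping the ordering into dyadic levels and establishing uniform (in $N$) Riesz stability and bounded condition numbers of the level-wise gamblet transforms, so that energy-norm localization survives as charge decay with only polynomial prefactors --- is the bulk of the technical work in \cite{Schafer2017} and cannot be dispatched by the bump-function testing argument alone (though that mechanism, using the $r$-maximin separation at scale $\ell_j$ to isolate $c_k$, is indeed how the transfer is organized there). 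Since the paper itself treats the statement as a black box, your sketch is correct at the level of detail the paper offers, and it correctly identifies where the real proof lives.
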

In order to prove the approximation accuracy of the KL-minimizer, we have to compare the approximation accuracy in Frobenius norm and in KL-divergence.
For brevity, we write $\KL*{A}{B} \defeq \KL{\N\left(0, A\right)}{\N\left(0,B\right)}$.

\begin{lemma}
    \label{lem:compKLFRO}
    Let $\lambda_{\min}$, $\lambda_{\max}$ be the minimal and maximal eigenvalues of $\KM$, respectively.
    Then there exists a universal constant $C$ such that for any matrix $M \in \Reals^{\I \times \I}$, we have 
    \begin{align*}
    \lambda_{\max} \left\|\IKM - M M^{\top} \right\|_{\FRO} \leq C &\Rightarrow \KL*{\Theta}{\left(M M^{\top}\right)^{-1}} \leq \lambda_{\max} \left\|\IKM - M M^{\top} \right\|_{\FRO},  \\
    \KL*{\Theta}{\left(M M^{\top}\right)^{-1}} \leq C &\Rightarrow \left\|\IKM - M M^{\top} \right\|_{\FRO} \leq \lambda_{\min}^{-1} \KL*{\Theta}{\left(M M^{\top}\right)^{-1}}.
    \end{align*}
\end{lemma}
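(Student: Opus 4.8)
The plan is to reduce the Kullback--Leibler divergence to a scalar sum over eigenvalues and then compare that sum term by term with the squared Frobenius norm. Set $E \defeq M M^{\top} - \IKM$, so that $\left\|\IKM - M M^{\top}\right\|_{\FRO} = \|E\|_{\FRO}$, and, using $\IKM = \KM^{-1}$, $M M^{\top}\KM = \Id + E\KM$. Substituting into the closed form~\eqref{eqn:KLnormal} for the divergence of two centered Gaussians collapses it to
\begin{equation*}
    2\,\KL*{\KM}{(M M^{\top})^{-1}} = \trace(E\KM) - \logdet(\Id + E\KM) = \sum_{i=1}^{N} g(\mu_i),
\end{equation*}
where $g(\mu) \defeq \mu - \log(1+\mu)$ and the $\mu_i$ are the eigenvalues of $E\KM$. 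These are real, since $E\KM$ is similar (via conjugation by $\KM^{1/2}$) to the symmetric matrix $F \defeq \KM^{1/2} E \KM^{1/2}$, which satisfies $\|F\|_{\FRO}^2 = \sum_i \mu_i^2$ and $\lambda_{\min}\|E\|_{\FRO} \leq \|F\|_{\FRO} \leq \lambda_{\max}\|E\|_{\FRO}$. The function $g$ is nonnegative and convex with $g(0) = g'(0) = 0$ and $g''(0) = 1$, so near the origin $g(\mu) \approx \tfrac12\mu^2$; the entire lemma reduces to controlling $g$ on a neighborhood of $0$ and transferring $\|F\|_{\FRO}$ to $\|E\|_{\FRO}$ through the extreme eigenvalues of $\KM$.

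For the first implication, I would use the smallness of $\lambda_{\max}\|E\|_{\FRO}$ to force $\max_i|\mu_i| \leq \|F\|_{\FRO} \leq \lambda_{\max}\|E\|_{\FRO}$ into a range (say $|\mu| \leq 1/2$) on which the elementary bound $g(\mu) \leq 2\mu^2$ holds; summing gives $2\,\KL*{\KM}{(M M^{\top})^{-1}} \leq 2\|F\|_{\FRO}^2 \leq 2(\lambda_{\max}\|E\|_{\FRO})^2$, and one further use of the smallness hypothesis converts the square into the stated linear bound. This step simultaneously certifies that $\Id + F \succ 0$, hence $M M^{\top} \succ 0$ and the divergence is finite. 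For the second implication I would run the argument in reverse: because every summand $g(\mu_i)$ is nonnegative, the hypothesis $\KL*{\KM}{(M M^{\top})^{-1}} \leq C$ bounds each $g(\mu_i)$ individually and thereby confines every $\mu_i$ to a fixed bounded interval around $0$, on which a matching lower bound $g(\mu) \geq c_0\mu^2$ holds; summing yields $2\,\KL*{\KM}{(M M^{\top})^{-1}} \geq c_0\|F\|_{\FRO}^2 \geq c_0\lambda_{\min}^2\|E\|_{\FRO}^2$, which bounds $\|E\|_{\FRO}$ from above in terms of $\lambda_{\min}^{-1}$ and the divergence.

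The scalar estimates for $g$ and the operator-norm sandwiching of $\|F\|_{\FRO}$ by $\lambda_{\min}\|E\|_{\FRO}$ and $\lambda_{\max}\|E\|_{\FRO}$ are routine. The delicate point is the second (lower-bound) direction: the divergence vanishes \emph{quadratically} in the perturbation while $\|E\|_{\FRO}$ is only \emph{linear}, so the spectral comparison most directly controls $\|E\|_{\FRO}$ by a constant multiple of $\lambda_{\min}^{-1}\sqrt{\KL*{\KM}{(M M^{\top})^{-1}}}$, and it is precisely the a priori smallness of the divergence that must be exploited to pass to the form recorded in the lemma. Ensuring that a single universal constant $C$ makes both hypotheses compatible with the interval on which $c_0\mu^2 \leq g(\mu) \leq 2\mu^2$ holds is the one bookkeeping point that needs care.
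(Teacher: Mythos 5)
Your proposal is, in substance, the same argument as the paper's own proof: the paper conjugates by $L \defeq \chol(\IKM)$ instead of by $\KM^{1/2}$, so the eigenvalues $\lambda_k(L^{-1}MM^{\top}L^{-\top})-1$ appearing there are exactly your $\mu_i$, its $\phi_{\operatorname{KL}}(x) = (x-\log(1+x))/2$ is your $g/2$, its $\phi_{\FRO}(x)=x^2$ is your quadratic comparison function, and it uses the identical Taylor comparison near $0$ together with the identical sandwich $\lambda_{\min}\|E\|_{\FRO} \le \|L^{-1}EL^{-\top}\|_{\FRO} \le \lambda_{\max}\|E\|_{\FRO}$. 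Your treatment of the first implication (invoke smallness once to enter the regime where $g(\mu)\le 2\mu^2$, and once more to trade the square for a linear bound) is exactly the paper's, and it is correct.

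The gap is in the second implication, and you located it but proposed to close it in the wrong direction. What your argument (and the paper's) actually yields is $2\,\KL*{\KM}{(MM^{\top})^{-1}} \ge c_0\|F\|_{\FRO}^2 \ge c_0\lambda_{\min}^2\|E\|_{\FRO}^2$, i.e.\ $\|E\|_{\FRO} \le \lambda_{\min}^{-1}\sqrt{(2/c_0)\,\KL*{\KM}{(MM^{\top})^{-1}}}$. Smallness of the divergence cannot upgrade this square-root bound to the stated linear one: for $0\le t\le 1$ one has $\sqrt{t}\ge t$, so the passage $\sqrt{t}\le t$ would require the divergence to be bounded \emph{below} by a constant, the opposite of the hypothesis. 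In fact the linear form is false as stated: take $N=1$, $\KM=\IKM=1$, $MM^{\top}=1+\mu$; then $\|\IKM - MM^{\top}\|_{\FRO}=\mu$ while $\KL*{\KM}{(MM^{\top})^{-1}} = (\mu-\log(1+\mu))/2 \sim \mu^2/4$, so the hypothesis holds for all small $\mu$ but the conclusion fails. You should know that the paper's own proof commits precisely the same leap --- its closing sentence simply asserts the implication --- so your write-up faithfully reproduces the published argument, flaw included, and is in fact more candid about where the weak point sits. The defensible statement is the square-root version $\|E\|_{\FRO} \le C'\lambda_{\min}^{-1}\sqrt{\KL*{\KM}{(MM^{\top})^{-1}}}$ (equivalently, a bound on $\|E\|_{\FRO}^2$), and that version suffices for the downstream accuracy theorem, where the right-hand side is in any case only controlled up to factors polynomial in $N$, which the choice $\rho \ge C\log(N/\epsilon)$ absorbs.
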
         
\begin{proof}
Writing $L \defeq \chol(A)$ and $\phi_{\FRO}(x) \defeq x^2$ and $\phi_{\operatorname{KL}}(x) \defeq (x - \log(1 + x))/2$, we have 
\small
\begin{align*}
    & \lambda_{\min} \left\| \IKM - M M^{\top} \right\|_{\FRO}
    = \lambda_{\min} \left\|L L^{-1} \left(\IKM - M M^{\top} \right) L^{-\top} L^{\top} \right\|_{\FRO}\\
    &\leq \left\|\Id - L^{-1} M M^{\top} L^{-\top} \right\|_{\FRO} 
    = \sum \limits_{k = 1}^N \phi_{\FRO} \left(\lambda_{k} \left( L^{-1} M M^{\top} L^{-\top} \right) - 1\right)\\
    &=  \left\|L^{-1} \left(\IKM - M M^{\top} \right) L^{-\top} \right\|_{\FRO}
    \leq \lambda_{\max} \left\|\IKM - M M^{\top} \right\|_{\FRO}
\end{align*}
and         
\begin{equation}
    \KL*{\KM}{\left(M M^{\top}\right)^{-1}} 
    = \sum \limits_{k = 1}^N \phi_{\operatorname{KL}} \left( \lambda_{k}\left( L^{-1} M M^{\top} L^{-\top} \right) \right),
\end{equation}
where $\left(\lambda_{k}(\cdot)\right)_{1  \leq k \leq N}$ returns the eigenvalues ordered from largest to smallest, while   $\lambda_{\min}(\cdot)$ ($\lambda_{\max}(\cdot)$) returns the smallest (largest) eigenvalue.
The leading-order Taylor expansion of $\phi_{\operatorname{KL}}$ around $0$ is given by $x \mapsto x^2/4$. 
Thus, there exists a constant $C$ such that for $\min(|x|, \phi_{\FRO}(x), \phi_{\operatorname{KL}}(x)) \leq C$ we have $\phi_{\operatorname{KL}}(x) \leq \phi_{\FRO}(x) \leq 8 \phi_{\operatorname{KL}} (x)$. 
Therefore, for $\lambda_{\max} \left\|\IKM - M M^{\top} \right\|_{\FRO} \leq C$ we have $\KL*{\Theta}{\left(M M^{\top}\right)^{-1}} \leq \lambda_{\max} \left\|\IKM - M M^{\top} \right\|_{\FRO}$. 
For $\KL*{\Theta}{\left(M M^{\top}\right)^{-1}} \leq C$ this implies  
$\left\|\IKM - M M^{\top} \right\|_{\FRO} \leq \lambda_{\min}^{-1} \KL*{\Theta}{\left(M M^{\top}\right)^{-1}}$.
\end{proof}

Using \cref{lem:compKLFRO}, we can now use the results of \cite{Schafer2017} to conclude \cref{thm:accuracy}.
\begin{proof}[Proof of \cref{thm:ap_accuracy}]
    \cite[Thm.~3.16]{Schafer2017} implies that there exists a polynomial $\mathbf{p}$ depending only on $(d,s,\delta, \IK)$ such that $\lambda_{\max}, \lambda_{\min}^{-1} \leq \mathbf{p}(N)$. 
    Thus, by choosing $\rho \geq C \log(N)$ we can deduce by \cref{thm:accIchol} that $\lambda_{\max} \left\| \IKM - L^S L^{S,\top} \right\| \leq C$ for $C$ the constant in \cref{lem:compKLFRO}. 
    Thus, We have $\KL*{\KM}{\left(L^{S}L^{S,\top}\right)^{-1}} \leq \lambda_{\max} \left\| \IKM - L^S L^{S,\top} \right\|$.
    The KL-optimality of $L^{\rho}$, implies  $\KL*{\KM}{\left(L^{\rho}L^{\rho,\top}\right)^{-1}} \leq \lambda_{\max} \left\| \IKM - L^S L^{S,\top} \right\| \leq C$.
    Using one more time \cref{lem:compKLFRO}, we also obtain 
    \begin{equation}
        \left\| \IKM - L^{\rho}L^{\rho,\top} \right\| \leq \lambda_{\min}^{-1} \KL*{\KM}{\left(L^{\rho}L^{\rho,\top}\right)^{-1}} \leq \lambda_{\max}/\lambda_{\min} \left\| \IKM - L^{S}L^{S,\top} \right\|.
    \end{equation}

\end{proof}

\section{Computation of the ordering and sparsity pattern}
\label{apsec:sortSparse}

We will now explain how to compute the ordering and sparsity pattern described in \cref{sec:ordSparse},
using only near-linearly many evaluations of an oracle $\disttt(i,j)$ that returns the distance between the points $x_i$ and $x_j$.
To do so efficiently in general, we need to impose a mild additional assumption on the dataset (c.f. \cite{Schafer2017}).
\begin{condition}[Polynomial Scaling]
\label{cond:polynomialScaling}
   There exists a polynomial $\mathbf{p}$ for which 
   \begin{equation*}
       \frac{\max_{i \neq j \in \I} \dist(x_i, x_j)}{\min_{i \neq j \in \I} \dist(x_i, x_j)} 
       \leq \mathbf{p}(N).
   \end{equation*}
\end{condition}
Under \cref{cond:intrinsicDimension,cond:polynomialScaling} , \cite[Alg.~3]{Schafer2017} allows us to compute the maximin ordering $\prec$ and sparsity pattern $\left\{(i,j): \dist(x_i, x_j) \leq \rho \max(\ell_i, \ell_j)\right\}$ in computational complexity $\O(N \log(N) \rho^{\tilde{d}})$ in space and time.
The resulting pattern is larger than the sparsity pattern $S_{\prec, l \rho}$ introduced in \cref{ssec:maximin}, which can thus be obtained by truncating the pattern obtained by \cite[Alg.~3]{Schafer2017}.
By performing the truncation of the sets of children and parents $c,p$ as used by \cite[Alg.~3]{Schafer2017} during execution of the algorithm, as opposed to truncating the sparsity pattern after execution of the algorithm, the space complexity for obtaining $\prec$ and $S_{\prec,l,\rho}$ can be reduced to $\O(N \rho^{\tilde{d}})$.
\cref{alg:sortSparseTrunc} is a minor modification of \cite[Alg.~3]{Schafer2017} that performs such a truncation.
\begin{theorem}[Variant of {\cite[Thm.~A.5]{Schafer2017}}]
    Let $\Omega = \Reals^d$ and $\rho \geq 2$. 
    \cref{alg:sortSparseTrunc} computes  the reverse maximin ordering $\prec$ and sparsity pattern $S_{\prec, l, \rho}$ in computational complexity 
    $C\rho^{\tilde{d}} N$ in space and $CN\log(N)\rho^{\tilde{d}}(\log N + C_{\disttt})$ in time.
    Here, $C = C(\tilde{d}, C_{\tilde{d}}, \mathbf{p})$ depends only on the constants appearing in \cref{cond:intrinsicDimension,cond:polynomialScaling}, and $C_{\disttt}$ is the cost of evaluating $\disttt$.
\end{theorem}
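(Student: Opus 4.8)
The plan is to follow the proof of \cite[Thm.~A.5]{Schafer2017} and to modify only the bookkeeping of the children and parent lists $c,p$. I would first recall that \cite[Alg.~3]{Schafer2017} already produces the reverse maximin ordering $\prec$ together with the symmetric pattern $\{(i,j): \dist(x_i,x_j) \leq \rho \max(\ell_i,\ell_j)\}$ within the claimed time budget, by maintaining a heap keyed on the current distance of each not-yet-selected point to the selected set and, for each point, a list of nearby points it is responsible for updating. Since $i \succeq j$ implies $\ell_i \geq \ell_j$, the target pattern $S_{\prec,\ell,\rho}$ is exactly the lower-triangular part of this symmetric pattern after shrinking the interaction radius from $\rho \ell_i$ to $\rho \ell_j$; thus \cref{alg:sortSparseTrunc} obtains $S_{\prec,\ell,\rho}$ by discarding, for each emitted interaction, the pairs that exceed this smaller radius.

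The crucial modification is to apply this truncation \emph{during} the run, on the lists $c,p$, rather than to the final pattern. To establish correctness I would verify two invariants. First, the quantities driving the greedy selection are nearest-selected-point distances, which never exceed $\ell_i$ and hence (using $\rho \geq 2$) lie well inside the truncation radius; they are therefore never discarded, and the heap keeps returning the correct maximin point at every step. Second, any pair $(i,j) \in S_{\prec,\ell,\rho}$ has $\dist(x_i,x_j) \leq \rho \ell_j$ and is detected through the parent/children mechanism at the scale of the finer point $j$, so retaining all interactions out to a fixed constant multiple of the current covering radius suffices to emit it; consequently no entry of $S_{\prec,\ell,\rho}$ is lost.

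For the space bound I would invoke \cref{cond:intrinsicDimension}: after truncation, the children (resp.\ parents) of any point lie in a ball of radius $\O(\rho \ell_i)$ about $x_i$ while, by the separation guaranteed by the reverse $r$-maximin ordering ($\dist(x_i,x_j) \geq r\ell_i$ for $j \succ i$), being mutually separated by at least $r \ell_i$; hence each list has at most $\O(\rho^{\tilde{d}})$ entries, and summing over the $N$ points yields total space $C \rho^{\tilde{d}} N$ with $C = C(\tilde{d}, C_{\tilde{d}})$. This removes precisely the extra $\log N$ factor that a coarse point would otherwise incur by accumulating interactions across all $\O(\log N)$ scales, the number of scales being $\O(\log N)$ by the polynomial-scaling \cref{cond:polynomialScaling}. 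For the time bound I would note that truncation only removes work, so the analysis of \cite[Thm.~A.5]{Schafer2017} transfers: the construction performs $\O(N \log(N) \rho^{\tilde{d}})$ update operations (each point is refreshed at $\O(\log N)$ scales, and each refresh touches $\O(\rho^{\tilde{d}})$ neighbors), and each operation costs one evaluation of $\disttt$ together with one heap update, i.e.\ $\O(\log N + C_{\disttt})$, for the claimed total of $C N \log(N) \rho^{\tilde{d}} (\log N + C_{\disttt})$.

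The main obstacle is the correctness of this in-flight truncation: one must show that shrinking $c,p$ during execution never deprives a later, finer point of a parent it needs in order to compute its nearest-selected-point distance correctly and to discover all of its sparsity entries. The delicacy is that $\ell_i$ is only known once $i$ is selected, so the truncation radius must be specified relative to the scale at which an interaction is processed rather than to $\ell_i$ itself; I expect the cleanest route is to show that the truncated lists always contain, at every scale, all points within a fixed constant multiple of the current covering radius, which is exactly the quantity jointly controlled by \cref{cond:intrinsicDimension} and the separation property of the reverse $r$-maximin ordering.
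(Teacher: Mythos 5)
Your proposal takes the same route as the paper: the paper's entire proof is the one sentence that the argument is ``essentially the same as the proof of \cite[Thm.~A.5]{Schafer2017}'', and your reconstruction---rerunning that analysis while truncating the children/parent lists $c,p$ in-flight at a radius keyed to the current covering radius (the scale variable $l_{\operatorname{Trunc}}$) rather than to the as-yet-unknown $\ell_j$, then checking that the heap-driven greedy selection and the pattern discovery are unaffected---is precisely the modification the paper intends with \cref{alg:sortSparseTrunc}. The only loose point is your claim that \emph{each} list has $\O(\rho^{\tilde{d}})$ entries: children lists of coarse points can transiently be much larger (the first selected point begins with all $N$ points as children, and unselected children have no mutual separation guarantee), so the correct accounting bounds the \emph{total} storage by counting, for every point, its $\O(\rho^{\tilde{d}})$ parents---selected points that are mutually separated on the current scale---which is exactly the per-column packing argument the paper uses in the proof of \cref{thm:complexity}.
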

\begin{proof}
    The proof is essentially the same as the proof of \cite[Thm.~A.5]{Schafer2017}.
\end{proof}
Similarly, the proof of \cite[Thm.~A.2]{Schafer2017} can be adapted to show that in the setting of \cref{thm:accuracy}, there exists a constant $C$ depending only on $d$,  $\Omega$, and $\delta$, such that for $\rho > C$,  \cref{alg:sortSparseTrunc} computes the maximin ordering in computational complexity $CN (\log(N) \rho^{d} +C_{\disttt_{\Omega}})$ in time and $CN \rho^{d}$ in space, where $C_{\disttt_{\Omega}}$ is an upper bound on the complexity of computing the distance of an arbitrary point $x \in \Omega$ to $\partial \Omega$.

We furthermore note that a reverse $r$-maximin ordering with $r<1$ (see \cref{def:rmaximin}) can be computed in computational complexity $\O(N\log(N))$ by quantizing the values of $(\log(\ell_i))_{i \in \I}$ in multiples of $\log(r)$, which avoids the complexity incurred by the restoration of the heap property in Line 20 of \cite[Alg.~3]{Schafer2017}.

\begin{algorithm}[!ht]\footnotesize
  \textbf{Input:} A real parameter $\rho \geq 2$ and Oracles $\disttt( \quark, \quark ), \disttt_{\partial \Omega}( \quark )$ such that
    $\disttt ( i, j ) = \dist\left(x_i, x_j\right)$ and
  $\disttt_{\partial \Omega} \left(i\right) =
  \dist\left( x_i, \partial \Omega \right)$ \\
  \textbf{Output:} An array $l[:]$ of distances, an array $P$ encoding the multiresolution
    ordering, and an array of index pairs $S$ containing the sparsity pattern.\\
    \begin{algorithmic}[1]
      \STATE $P = \emptyset$
      \FOR{$i \in \{1,  \dots , N\}$}
        \STATE $l[i] \leftarrow \disttt_{\partial \Omega}(i)$\;
        \STATE $p[i] \leftarrow \emptyset$\;
        \STATE $c[i] \leftarrow \emptyset$\;
      \ENDFOR
      \STATE \COMMENT{Creates a mutable binary heap, containing pairs of indices and
      distances as elements:}
      \STATE $H \leftarrow \mathtt{MutableMaximalBinaryHeap}\left( \{ (i, l[i] ) \}_{i \in \{1, \dots ,N\}} \right)$
      \STATE \COMMENT{Instates the Heap property, with a pair with maximal distance occupying the root of the heap:}
      \STATE $\mathtt{heapSort}!( H )$
    
      \STATE \COMMENT{Processing the first index:}
      \STATE \COMMENT{Get the root of the heap, remove it, and restore the heap property:}
      \STATE $(i, l) = \mathtt{pop}(H)$
      \STATE \COMMENT{Add the index as the next element of the ordering}
      \STATE $\mathtt{push}\left(P, i\right)$
      \FOR{$j \in \{1,  \dots , N\}$ }
        \STATE $\mathtt{push}( c[i], j )$
        \STATE $\mathtt{push}( p[j], i )$
        \STATE $\mathtt{sort!}\left( c[i], \disttt( \quark, i ) \right)$
        \STATE $\mathtt{decrease!}\left(H, j, \disttt( i, j ) \right)$
      \ENDFOR
      \STATE \COMMENT{Processing  remaining indices:}
      \STATE $l_{\operatorname{Trunc}} \leftarrow l$
      \WHILE{$H \neq \emptyset$}
        \label{line-whileSortSparse}
        \STATE \COMMENT{Get the root of the heap, remove it, and restore the heap property:}
        \STATE $(i, l) = \mathtt{pop}(H)$
        \STATE $l[i] \leftarrow l$
        \STATE \COMMENT{Select the parent that has possible children of $i$ amongst its
          children, and is closest to $i$:}
        \STATE $k = \argmin_{j \in p[i]: \disttt( i, j ) + \rho l[i] \leq \rho \min(l_{\operatorname{Trunc}},l[j])}
        \label{line-kArgminSortSparse}
      \disttt\left( i, j \right)$\;
        \STATE \COMMENT{Loop through those children of $k$ that are close enough to $k$ to possibly
        be children of $i$:}
        \FOR{$j \in c[k]: \disttt( j, k ) \leq \disttt( i, k ) +
          \rho l[i] $}
          \label{line-forSortSparse}
          \STATE $\mathtt{decrease!}\left(H, j, \disttt( i, j ) \right)$
          \IF{ $\disttt( i, j ) \leq \rho l[i]  $}
            \STATE $\mathtt{push}( c[i], j )$
            \STATE $\mathtt{push}( p[j], i )$
          \ENDIF
        \ENDFOR
        \STATE \COMMENT{Add the index as the next element of the ordering}
        \STATE $\mathtt{push}\left(P, i\right)$
        \STATE \COMMENT{Sort the children according to distance to the parent node, so that the closest
        children can be found more easily}
        \STATE $\mathtt{sort!}\left( c[i], \disttt( \quark, i ) \right)$
        \STATE \COMMENT{Truncate the sparsity pattern to achieve linear space complexity}
        \IF{$\forall j \notin P, \exists i \in P: \disttt(i,j) < l_{\operatorname{Trunc}}/2$}
            \STATE $l_{\operatorname{Trunc}} \leftarrow l_{\operatorname{Trunc}}/2 $
            \FOR{$j \in c[i] \setminus P, \disttt(i,j) > \rho l_{\operatorname{Trunc}}$}
                \STATE $c[i] \leftarrow c[i] \setminus \{j\}$
                \STATE $p[j] \leftarrow p[j] \setminus \{i\}$
            \ENDFOR
        \ENDIF[c.f.][]
        \label{line-sort!SortSparse}
      \ENDWHILE
      \STATE \COMMENT{Aggregating the lists of children into the sparsity pattern:}
      \FOR{$i \in \{1, \dots ,N\}$}
        \FOR{ $j \in c[i]$}
          \STATE $\mathtt{push!}\left(S, ( i, j ) \right) $
          \STATE $\mathtt{push!}\left(S, ( j, j ) \right) $
        \ENDFOR
      \ENDFOR
   \end{algorithmic}
   \caption{Ordering and sparsity pattern algorithm (see \cite[Alg.~3]{Schafer2017}).}
   \label{alg:sortSparseTrunc}
\end{algorithm}

As described in \cref{ssec:aggregated}, the aggregated sparsity pattern $S_{\prec,l,\rho,\lambda}$ can be computed efficiently from $S_{\prec,l,\rho}$.
However, forming the pattern $S_{\prec,l,\rho}$ using a variant of \cite[Alg.~3]{Schafer2017} has complexity $\O(N \log(N) \rho^{\tilde{d}})$ in time and $\O(N\rho^{\tilde{d}})$ in space, while the aggregated pattern $S_{\prec, l, \rho, \lambda}$ only has space complexity $\O(N)$, begging the question if this computational complexity can be improved. 
Let $\{s_{\tilde{i}}\}_{\tilde{i} \in \tilde{I}}$ be the supernodes as constructed in \cref{ssec:aggregated} and identify each supernodal index $\tilde{i}$ with the first (w.r.t. $\prec$) index $i \in \I$ such that $i \leadsto \tilde{i}$.
We then define 
\begin{equation}
    \bar{S}_{\prec,l,\rho,\lambda} \defeq 
    \bigcup_{\tilde{i} \in \tilde{I}} \left\{(i,j): i \preceq j, i \leadsto \tilde{i}, \dist(x_{\tilde{i}},x_j) \leq \rho (1+\lambda) \tilde{i}\right\}.
\end{equation}
\cref{alg:sparsityAggregated} allows us to construct the sparsity pattern $\bar{S}_{\prec,l,\rho,\lambda} \supset \tilde{S}_{\prec,l,\rho,\lambda}$ in complexity $\O(N \log(N))$ in time and $\O(N)$ in space, given $\prec$ and $l$.
In this algorithm, we will implement supernodes as pairs of arrays of indices $\sigma = (\sigma_m, \sigma_n)$.
This encodes the relationship between all indices in $\sigma_m$ (the \emph{parents}) and all indices in $\sigma_n$ (the \emph{children}). 
Naively, this would require  $\O(\# \sigma_m \# \sigma_n)$ space complexity, but by storing the entries of $\sigma_m$ and $\sigma_n$, the complexity is reduced to $\O(\#\sigma_m + \#\sigma_n)$ space complexity, which improves the asymptotic computational complexity.
\begin{figure}[t]
	\scriptsize
	\begin{minipage}[t]{7.3cm}
		\vspace{0pt}
		\begin{algorithm}[H]
			\textbf{Input:} $\I, \prec, l, \disttt(\cdot,\cdot)$, $\rho$, $\lambda$\\
			\textbf{Output:} Sets of supernodes $\bar{S}$, $\tilde{S}$ \\
			\begin{algorithmic}[1]
    			\STATE $i_N, i_{N-1} \leftarrow$ last two ind. w.r.t. $\prec$
    			\STATE $\N, \N^{\geq}  \leftarrow \{(\{i_N\}, \I)\}, \{ (\{i_N\}, \{i_N\})\}$\;
    			\STATE $r, l_{i_{N}} \leftarrow l_{i_{N-1}}/ \lambda, \infty$
                \WHILE{$r > \min_{i \in \I} l_i$}
                    \STATE $\N, \tilde{\N}^{\geq} \leftarrow \texttt{Refine}(\N, l, r, \rho, \lambda)$
                    \STATE $\N^{\geq} \leftarrow \N^{\geq} \cup \tilde{\N}^{\geq}$
                    \STATE $ r \leftarrow r / \lambda$
                \ENDWHILE
                \STATE $\J, \bar{S} \leftarrow \emptyset, \emptyset$
                \FOR{$i \in \I$ (in increasing order by $\prec$)}
                    \IF{$i \notin \J$}
                        \STATE $\tilde{s} \leftarrow (\emptyset, \emptyset)$,
                        Pick $\sigma \in \N^{\geq}$ : $i \in \sigma_m$
                        \FOR{$j \in \sigma_m$}
                            \IF{$i \preceq j$, $l_j \leq \lambda l_i$, $j \notin \J$}
                                \STATE $\J, \tilde{s}_n  \leftarrow \J \cup \{j\}, \tilde{s}_n \cup \{j\}$
                            \ENDIF
    		            \ENDFOR
    		            \FOR{$\tilde{\sigma} \in \N^{\geq}: \exists j \in \tilde{\sigma}_m: j \in \sigma_{m}$}
    		                \FOR{$k \in \tilde{\sigma}_n: \disttt(i,k) \leq \rho(1 + \lambda)$}
    		                    \STATE $\tilde{s}_m \leftarrow \tilde{s}_m \cup \{k\}$	
    		                \ENDFOR
    		            \ENDFOR  
    		            \STATE $\bar{S} \leftarrow \bar{S} \cup \{\tilde{s}\}$
                    \ENDIF
                \ENDFOR
                \STATE $\tilde{S} \leftarrow \texttt{Reduce}(\rho, \prec, l, \bar{S})$\;
    			\RETURN $\bar{S}, \tilde{S}$\;
    		\end{algorithmic}
			\caption{\label{alg:sparsityAggregated} Computation of $\tilde{S}$ and $\bar{S}$}
		\end{algorithm}
	\scriptsize
	\begin{algorithm}[H]
		\textbf{Input:} $\prec, l, \disttt(\cdot,\cdot)$, $\rho$, $\bar{S}$\\
		\textbf{Output}$\tilde{S} =  \tilde{S}_{\prec,l,\rho}$\\
		\begin{algorithmic}[1]
    		\STATE $\tilde{S} \leftarrow \emptyset$
    		\FOR{$\sigma \in \bar{S}$}
    		    \STATE $\tilde{s} \leftarrow (\sigma_m, \emptyset)$
    	        \FOR{$i \in \sigma_m, j \in \sigma_n$}
    	            \IF{$\disttt(i,j) \leq \rho l_i$ and $i \prec j$}
    	                \STATE $\tilde{s}_n \leftarrow \tilde{s}_n \cup \{j\}$
    	            \ENDIF
    	        \ENDFOR	
    	        \STATE $\tilde{S} \leftarrow \tilde{S} \cup \{\tilde{s}\}$
    	    \ENDFOR
    		\RETURN $\tilde{S}$
    	\end{algorithmic}
		\caption{$\label{alg:reduce} \texttt{Reduce}(\rho, \prec, l, \bar{S})$}
	\end{algorithm}	
	\end{minipage}
	\scriptsize
	\begin{minipage}[t]{5.6cm}
		\vspace{0pt}
		\begin{algorithm}[H]
			\textbf{Input:} Supernodal set $\N$, \\ $\disttt(\cdot,\cdot)$, $l$, $r$, $\rho$, $\lambda$\\ 
			\textbf{Output:} A new set $\mathcal{M}$ of supernodes, set $\N^{\geq}$ of truncated supernodes\\
			\begin{algorithmic}[1]
    			\STATE $\J, \mathcal{N}^{\geq}, \mathcal{M} \leftarrow \emptyset, \emptyset, \emptyset$
    		    \WHILE{$\J \neq \I$}
    		        \STATE Pick $i \in \I \setminus \J$
    		        \STATE $\J \leftarrow \J \cup \{i\}$
    		        \STATE Pick $\sigma \in \mathcal{N}$ satisfying $i \in \sigma_m$
    		        \STATE $\tilde{\sigma}_m, \tilde{\sigma}_n \leftarrow \emptyset, \emptyset$ 
    		        \FOR{$j \in \sigma_n$}
    		            \IF{$\disttt(i,j) \leq \rho \lambda r$}
    		                \STATE $\tilde{\sigma}_m \leftarrow \tilde{\sigma}_m \cup \{j\}$
    		                \STATE $\J \leftarrow \J \cup \{j\}$
    		            \ENDIF
    		            \IF{$\disttt(i,j) \leq 2 \rho \lambda r$}
    		                \STATE $\tilde{\sigma}_n \leftarrow \tilde{\sigma}_n \cup \{j\}$\;
    		            \ENDIF
    		        \ENDFOR
    		        \STATE $\mathcal{M} \leftarrow \mathcal{M} \cup \{(\tilde{\sigma}_m, \tilde{\sigma}_n)\}$
    		    \ENDWHILE
    	        \FOR{$\sigma \in \N$}
    	            \STATE $\sigma^{\geq}_m, \sigma^{\geq}_n \leftarrow \emptyset, \emptyset$
    	            \FOR{$i \in \sigma_m$}
    	                \IF{$r \leq l_i$}
    	                     \STATE $\sigma^{\geq}_m \leftarrow \sigma^{\geq}_m \cup \{i\}$ 
    	                \ENDIF
    	            \ENDFOR
    	            \FOR{$i \in \sigma_n$}
    	                \IF{$r \leq l_i$}
    	                    \STATE $\sigma^{\geq}_n \leftarrow \sigma^{\geq}_n \cup \{i\} $
    	                \ENDIF
    	            \ENDFOR
    	            \STATE $\N^{\geq} \leftarrow \N^{\geq} \cup \{ (\sigma^{\geq}_m, \sigma^{\geq}_n)\}$
    	        \ENDFOR
    			\RETURN $\mathcal{M}$, $\N^{\geq}$\;
    		\end{algorithmic}
    		\caption{\label{alg:aggregateNodes} \newline $\texttt{Refine}(\N, l,r,\rho, \lambda)$}
		\end{algorithm}
	\end{minipage}
	\caption{Algorithm for constructing the aggregated sparsity pattern from the reverse maximin ordering $\prec$ and length-scales $l$} 
\end{figure}

\begin{theorem}
Let the $\{x_i\}_{i \in \I}$ satisfy \cref{cond:intrinsicDimension} with $\tilde{d}$ and $C_{\tilde{d}}$, \cref{cond:regref} with constant $C_{\operatorname{regref}}$, and \cref{cond:polynomialScaling} with $\mathbf{p}$ and let $\lambda >1$. 
Then there exists a constant $C = C_{\tilde{d}, C_{\tilde{d}}, C_{\operatorname{regref}}, \mathbf{p}, \lambda}$ such that \cref{alg:sparsityAggregated} can compute $\bar{S}_{\prec,l,\rho,\lambda}$ in computational complexity $CC_{\disttt}N\log(N)$ in time and $CN$ in space and $\tilde{S}_{\prec,l,\rho,\lambda}$ in computational complexity $C C_{\disttt} N( \log(N) + \rho^{\tilde{d}})$ in time and $CN$ in space.
\end{theorem}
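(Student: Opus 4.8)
The plan is to decompose \cref{alg:sparsityAggregated} into its three phases---the while-loop that repeatedly calls \texttt{Refine} (\cref{alg:aggregateNodes}) to accumulate the truncated supernodal sets $\N^{\geq}$, the subsequent loop that assembles $\bar{S}$, and the final call to \texttt{Reduce} (\cref{alg:reduce}) that produces $\tilde{S}$---and to bound the cost of each phase separately. A first preliminary observation is that the number of distinct scales $r$ visited by the while-loop is $\O(\log N)$: by \cref{cond:polynomialScaling} the ratio of the largest to the smallest length scale is at most $\mathbf{p}(N)$, while $r$ is divided by $\lambda$ at every iteration, so the loop terminates after $\O(\log_{\lambda}\mathbf{p}(N)) = \O(\log N)$ steps. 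Since all per-entry work consists of a bounded number of arithmetic operations plus one call to $\disttt$, it suffices to count the entries touched, with the factor $C_{\disttt}$ multiplying the final time bound.

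The supernode-size and supernode-count estimates needed here are exactly those established in the proof of the preceding complexity theorem: each supernode has $\O(\rho^{\tilde{d}})$ parents and children by \cref{cond:intrinsicDimension}, and the total number of supernodes in $\bar{S}$ is $\O(N/\rho^{\tilde{d}})$, with $\sum_{\sigma}(\#\sigma_m + \#\sigma_n) = \O(N)$ by \cref{cond:regref}. The genuinely new ingredient is a bounded-overlap statement across scales: because the indices chosen as supernode centers in \texttt{Refine} at scale $r$ are separated from one another by at least a constant multiple of $\rho r$ (two centers closer than this would have forced one to be aggregated into the other), whereas the parents and children assigned to a supernode lie within $\O(\rho r)$ of its center, \cref{cond:intrinsicDimension} guarantees that any fixed index lies in $\O(1)$ supernodes \emph{at each scale}, hence in $\O(\log N)$ supernodes of the accumulated set $\N^{\geq}$.

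With these facts the cost bounds telescope. Writing $n_r \defeq \#\{i : \ell_i \geq r\}$, the total size of the supernodal set at scale $r$ is $\O(n_r)$, so the cost of \texttt{Refine} at that scale (proportional to the number of children it scans) is $\O(C_{\disttt}\,n_r)$; summing over scales and invoking \cref{cond:regref}, $\sum_r n_r \leq CN$, the whole refinement loop costs $\O(C_{\disttt}N)$ in time, and $\N^{\geq}$ has total size $\O(\sum_r n_r) = \O(N)$, giving $\O(N)$ space. In the $\bar{S}$ loop, each newly formed supernode $\sigma$ interacts only with those $\tilde\sigma \in \N^{\geq}$ sharing a parent with it; by the cross-scale bounded-overlap statement there are $\O(\log N)$ such $\tilde\sigma$, each contributing $\O(\rho^{\tilde{d}})$ distance evaluations, for $\O(\rho^{\tilde{d}}\log N)$ work per supernode and $\O(N\log N)$ in total over the $\O(N/\rho^{\tilde{d}})$ supernodes; this yields the claimed $\O(C_{\disttt}N\log N)$ time and $\O(N)$ space for $\bar{S}$. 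Finally, \texttt{Reduce} scans all parent--child pairs of each supernode, costing $\O(\rho^{2\tilde{d}})$ per supernode and $\O(N\rho^{\tilde{d}})$ overall, so computing $\tilde{S}$ costs the $\bar{S}$ time plus $\O(C_{\disttt}N\rho^{\tilde{d}})$, i.e. $\O(C_{\disttt}N(\log N + \rho^{\tilde{d}}))$ in time and $\O(N)$ in space.

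I expect the cross-scale bounded-overlap lemma of the second paragraph to be the main obstacle: one must track, through the somewhat intricate bookkeeping of \texttt{Refine}, exactly which indices end up as parents versus children of each supernode and at which scale, and verify that the separation of centers is genuinely of order $\rho r$ rather than merely of order $r$, so that \cref{cond:intrinsicDimension} produces the factor $\rho^{\tilde{d}}$ (and a per-scale overlap of $\O(1)$) rather than a larger power. Once this separation-and-overlap statement is in place, the telescoping over scales via \cref{cond:regref} and the $\O(\log N)$ scale count are comparatively routine, and the three cost bounds combine to give the stated complexities.
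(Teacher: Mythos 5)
Your overall strategy is the paper's: the same three-phase decomposition (the \texttt{Refine} loop, the $\bar{S}$-assembly loop, the call to \texttt{Reduce}), the same $\O(\log N)$ scale count from \cref{cond:polynomialScaling}, and the same key geometric facts --- supernode roots at scale $r$ are pairwise more than $\rho\lambda r$ apart, parents lie within $\rho\lambda r$ and children within $2\rho\lambda r$ of the root, so \cref{cond:intrinsicDimension} gives $\O(1)$ overlap per scale. The ``cross-scale bounded-overlap lemma'' you single out as the main obstacle is exactly the list of root properties the paper establishes by induction over the calls to \cref{alg:aggregateNodes}, so that part of your plan is sound.

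There is, however, a genuine error in your time analysis of the refinement phase. You claim that the supernodal set at scale $r$ has total size $\O(n_r)$ with $n_r \defeq \#\{i : \ell_i \geq r\}$, hence that \texttt{Refine} at scale $r$ costs $\O(C_{\disttt}\,n_r)$ and the whole refinement loop costs $\O(C_{\disttt}N)$ by \cref{cond:regref}. This is false: the while-loop of \cref{alg:aggregateNodes} runs until $\J = \I$, i.e.\ at \emph{every} scale it re-partitions all $N$ indices into new supernodes, and the children lists $\sigma_n$ it scans contain every point within $2\rho\lambda r$ of the root regardless of that point's length scale. The restriction to indices with $\ell_i \geq r$ occurs only afterwards, when the truncated set $\N^{\geq}$ is extracted; it is that truncated set whose size per scale is $\O(n_r)$, and \cref{cond:regref} is what bounds the accumulated \emph{space} $\sum_r n_r \leq CN$, not the work performed by \texttt{Refine}. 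Each call to \texttt{Refine} therefore costs $\Theta(N)$ (bounded above by $C C_{\disttt} N$ via the ball-packing argument: for fixed $j$, the indices $i$ that scan it are $\rho\lambda r$-separated and within $3\rho\lambda r$ of $j$), and the refinement loop costs $\Theta(C_{\disttt} N \log N)$, not $\O(C_{\disttt}N)$. Your final bounds survive only because your --- correspondingly generous --- $\O(N\log N)$ estimate for the assembly loop dominates; the paper's accounting is the mirror image of yours (refinement $\O(N\log N)$, assembly $\O(N)$ by a further ball-packing argument), so the theorem itself is unaffected, but the step as you argue it fails and the appeal to \cref{cond:regref} for the refinement time bound must be removed.
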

\begin{proof}
To establish correctness, the main observation is that after every application of \cref{alg:aggregateNodes},
each degree of freedom $i$ can be found in exactly one of the supernodes $\sigma \in \N$.
Furthermore, each $\sigma \in \N$ has a \emph{root} $\sqrt{\sigma} \in \I$ such that 
\begin{enumerate}
\item $\sqrt{\sigma} \in \sigma_n, \sigma_m$, 
\item \label{prop:sqrtradius} $j \in \sigma_m \Rightarrow \disttt(\sqrt{\sigma},j) \leq \rho \lambda r$,
\item $j \in \sigma_n \Leftrightarrow \disttt(\sqrt{\sigma},j) \leq 2 \rho \lambda r$,
\item $\sigma \neq \bar{\sigma} \Rightarrow \disttt(\sqrt{\sigma}, \sqrt{\bar{\sigma}}) > \rho \lambda r$.
\end{enumerate}
The main reason why the above could fail to hold true is that the inner for loop does not range over all $j \in \I$, but only over those in $\sigma_n$. 
However, at the first occurrence of \cref{alg:aggregateNodes} we have $\sigma_n = \I$ leading to the observations to hold true. 
For subsequent calls, we can show the invariance of these properties by induction.
The set $\N^{\geq}$ is obtained from the set $\N$ by only selecting the points in a certain range of length scales. 
Therefore, after completion of the while-loop of \cref{alg:sparsityAggregated}, every $i \in \I$ is contained in at least one of the $\{\sigma_m\}_{\sigma \in \N^{\geq}}$, and for $i \leadsto \sigma \in \N^{\geq}$, we have $\left\{j:(i,j) \in \bar{S}\right\} \subset \sigma_n$.
Thus, the for-loop of \cref{alg:sparsityAggregated} indeed computes $\bar{S}$.
Since $\tilde{S}_{\prec,l,\rho,\lambda} \subset \bar{S}_{\prec,l,\rho,\lambda}$, \cref{alg:reduce} correctly recovers $\tilde{S}_{\prec,l,\rho,\lambda}$.

We begin by analyzing the computational complexity of the while-loop of \cref{alg:sparsityAggregated}.
We first note that at every execution of the loop, $r$ is divided by $\lambda$. 
Thus, \cref{cond:polynomialScaling} implies that the loop is entered at most $C\log(N)$ times.
We now claim that that the time complexity of \cref{alg:aggregateNodes} is bounded above by $CC_{\disttt}N$.
To this end it is enough to upper-bound the number of points $i$ for which a given index can be picked as index $j$ in the while-loop of \cref{alg:aggregateNodes}.
By \cref{prop:sqrtradius}, for this to happen we need $\disttt(i,\sqrt{\sigma}) \leq \rho \lambda r$ and $\disttt(j,\sqrt{\sigma}) \leq 2 \rho \lambda r$ and hence, by the triangle inequality, $\disttt(i,j) \leq 3 \rho \lambda r$.
On the other hand, $i$ can not be in $\J$ already, which means that any two distinct $i_1, i_2$ have to satisfy $\disttt(i_1,i_2) > \rho \lambda r$.
By \cref{cond:intrinsicDimension}, we conclude that the maximum number of indices $i$ for which a given index $j$ gets picked is bounded above by a constant $C$ that depends of $C_{\tilde{d}}$ and $d$. 
This upper bounds the computational complexity of the while-loop in \cref{alg:aggregateNodes} by $C N$.
The computational complexity of the outermost for-loop of \cref{alg:aggregateNodes} can be bounded by $C N$, by a similar argument.
Summarizing the above, we have upper-bounded the time complexity of the while-loop in \cref{alg:sparsityAggregated} by $C N \log(N)$.
In order to bound the space complexity of the while-loop, we need to ensure that the size of $\N^{\geq}$ is bounded above as $C N$.
To this end, we notice that by arguments similar to the above, one can show that at all times, the number $\max_{i \in \I} \# \{\sigma \in \N: i \in \sigma_m \text{ or } i \in \sigma_N \}$ is bounded from above by a constant $C$.
By using \cref{cond:regref}, we can show that the space complexity of $\N^{\geq}$ is bounded by $C N$.
By ways of similar ball packing arguments, the complexity of the outer for-loop of \cref{alg:sparsityAggregated} can be bounded by $CN$, as well.
The complexity of \cref{alg:reduce} is bounded by $CN\rho^{\tilde{d}}$, the number of entries of the sparsity pattern $\bar{S}$, since it iterates over all entries of $\bar{S}$.
\end{proof}

\section{Including the prediction points}
\label{apssec:includingPrediction}

\subsection{Ordering the prediction points first}
\label{apsssec:predfirst}

\cref{alg:predfirst} describes how to compute the inverse Cholesky factor when forcing the prediction points to be ordered before the training points. 
In order to compute the ordering of the prediction points after the ordering of the training points has been fixed, we need to compute the distance of each prediction point to the closest training point.
When using \cite[Alg.~3]{Schafer2017}, this can be done efficiently while computing the maximin ordering of the training points by including the prediction points into the initial list of children of $i$, as is done in Line 11 of the algorithm.
Once the joint inverse Cholesky factor $L = \left(\begin{smallmatrix} L_{\Pred, \Pred} & 0\\ L_{\Train,\Pred} & L_{\Train, \Train} \end{smallmatrix}\right)$ has been computed, we have $\Expect\left[X_{\Pred} \middle | X_{\Train} = y\right] = L_{\Pred,\Pred}^{-\top} L_{\Train,\Pred}^\top y$ and $\Cov\left[X_{\Pred} | X_{\Train}\right] = L_{\Pred,\Pred}^{-\top} L_{\Pred,\Pred}^{-1}$.
We note that the conditional expectation can be computed by forming the columns of $L$ one by one, without every having to hold the entire matrix in memory, thus leading to linear space complexity similar to \cref{ssec:lowmemory}, while the same is not possible for the conditional covariance matrix.
\begin{figure}[t]
    \tiny
	\begin{minipage}[t]{6.0cm}
		\vspace{0pt}
		\begin{algorithm}[H]
			\textbf{Input:} $\K$, $\left\{x_i\right\}_{i \in \I_{\Train}}$, $\Omega$, $\left\{x_i\right\}_{i \in \I_{\Train}}$, $\rho$, ($\lambda$)\\
			\textbf{Output:} $L\in \Reals^{N \times N}$ l. triang. in $\prec$\\
		    \begin{algorithmic}[1]	
			    \STATE Comp. $\prec_{\Pred}$, $l_{\Pred}$ from $\left\{x_i\right\}_{i \in \I_{\Pred}}$, $\tilde{\Omega}$
			    \STATE Comp. $\prec_{\Train}$, $l_{\Train}$ from $\left\{x_i\right\}_{i \in \I_{\Train}}$, $\Omega$
			    \STATE $\prec \leftarrow (\prec_{\Pred}, \prec_{\Train})$
			    \STATE $l \leftarrow (l_{\Pred}, l_{\Train})$
			    \STATE $S \leftarrow S_{\prec, l, \rho}$ ($S \leftarrow S_{\prec, l, \rho, \lambda}$)
			    \STATE Comp. $L$ using Algorithm~\ref{alg:notAggregated}(\ref{alg:aggregated})
			    \RETURN $L$
			\end{algorithmic}
			\caption{\label{alg:predfirst}Ordering prediction variables first}
		\end{algorithm}
		\vspace{0pt}
		\begin{algorithm}[H]
			\textbf{Input:} $\K$, $\left\{x_i\right\}_{i \in \I_{\Train}}$, $\Omega$, $\left\{x_i\right\}_{i \in \I_{\Train}}$, $\rho$, $\lambda$ \\
			\textbf{Output:} Cond. mean and var. $\mu, \sigma \in \Reals^{\I_{\Pred}}$\\
		    \begin{algorithmic}[1]
			    \STATE Comp. $\prec$, $S_{\prec,l,\rho,\lambda}$ from $\left\{x_i\right\}_{i \in \I_{\Train}}$, $\Omega$\;
			    \FOR{$k \in \I_{\Pred}$}
			        \STATE $\delta_{k}, \sigma_k \leftarrow \K(x_k, x_k), \K(x_k, x_k)^{-1}$
			        \STATE $\mu_k \leftarrow 0$
			    \ENDFOR
			    \FOR{$\tilde{k} \in \tilde{\I}$}
			        \STATE $U^ \leftarrow P^{\updownarrow}\chol( P^{\updownarrow} \KM_{s_{\tilde{k}}, s_{\tilde{k}}} P^{\updownarrow})P^{\updownarrow}$
			        \FOR{$k \in s_{\tilde{k}}, l \in \I_{\Pred}$}
			            \STATE $B_{kl} \leftarrow \K(x_k, x_l)$
			        \ENDFOR
			        \STATE $B \leftarrow U^{-1} B$
			        \STATE $\tilde{y} \leftarrow U^{-1} y_{s_{\tilde{k}}}$
			     \FOR{$k \leadsto \tilde{k}$}
			         \STATE $\alpha \leftarrow      \tilde{y}_{s_k}^{\top} B_{s_k,\Pred}$
			         \STATE $\beta \leftarrow B_{s_k,\Pred}^{\top} B_{s_k,\Pred}$
			         \STATE $\gamma  \leftarrow \sqrt{ 1 + (\delta - \beta)^{-1} B_{k,\Pred}^{2}}$
			         \STATE $\ell \leftarrow - \delta^{-1} \gamma^{-1} B_{k,\Pred}^{\top} \left(1 + \frac{\beta}{\delta - \beta}\right)$
			         \STATE $\mu \leftarrow \mu + \ell/\gamma  \left(\tilde{y}_{k} + \frac{B_{k,\Pred} \alpha }{ \delta - \beta}\right)$
			         \STATE $\sigma \leftarrow \sigma + \ell^{2}$
			     \ENDFOR
			    \ENDFOR  
			    \STATE $\sigma \leftarrow \sigma^{-1}$
			    \STATE $\mu \leftarrow -\sigma \mu$
			    \RETURN $\mu$, $\sigma$
		    \end{algorithmic}
			\caption{\label{alg:predlastb1} Ordering prediction variables last, with $n_{\Pred} = 1$}
		\end{algorithm}
	\end{minipage}
	\scriptsize
	\begin{minipage}[t]{6.9cm}
		\vspace{0pt}
		\begin{algorithm}[H]
			\textbf{Input:} $\K$, $\left\{x_i\right\}_{i \in \I_{\Train}}$, $\Omega$, $\left\{x_i\right\}_{i \in \I_{\Train}}$, $\rho$, $\lambda$\\
			\textbf{Output:} Per batch cond. mean $\{\mu_b\}_{1 \leq b \leq m_{\Pred}}$ and cov. $\{C_b\}_{1 \leq b \leq m_{\Pred}}$ \\
			\begin{algorithmic}[1]
			    \STATE Comp. $\prec$, $S_{\prec,l,\rho,\lambda}$ from $\left\{x_i\right\}_{i \in \I_{\Train}}$, $\Omega$
			    \FOR{$\tilde{k} \in \tilde{I}$}
			        \STATE $U^{\tilde{k}} \leftarrow P^{\updownarrow}\chol( P^{\updownarrow} \KM_{s_{\tilde{k}}, s_{\tilde{k}}} P^{\updownarrow})P^{\updownarrow}$
			    \ENDFOR
			    \FOR{$b \in \{ 1, \dots, m_{\Pred} \}$}
			        \FOR{$\tilde{k} \in \tilde{\I}$}
			            \FOR{$k \in s_{\tilde{k}}, l \in \J_{b}$}
			                \STATE $(\KM_{s_{\tilde{k}}, b})_{kl} \leftarrow \K(x_k, x_l)$
			            \ENDFOR
			            \FOR{$k, l \in \J_{b}$}
			                \STATE $(\KM_{b, b})_{kl} \leftarrow \K(x_k, x_l)$
			            \ENDFOR
			            \STATE $C_b \leftarrow \KM_{b,b}^{-1}$
			            \STATE $B^{\tilde{k}} \leftarrow U^{\tilde{k},-1} \KM_{\tilde{k},b}$
			            \STATE $y^{\tilde{k}} \leftarrow U^{\tilde{k},-1} y_{s_{\tilde{k}}}$
			    	    \FOR{$k \leadsto \tilde{k}$}
			    	        \STATE $v \leftarrow \newline B_{s_k,b}^{\tilde{k}}\left( \KM_{b,b} - B_{s_k, b}^{\tilde{k},\top} B_{s_{k},b}\right)^{-1} B_{k,b}^{\tilde{k},\top}$
			    	        \STATE $c \leftarrow \sqrt{ 1 + v_k}$
			    	        \STATE $L_{b,k} \leftarrow -\frac{1}{c_k} \KM_{b,b}^{-1} B_{s_k,b}^{\tilde{k},\top} \left(\mathbf{e}_1 + v\right)$
			    	        \STATE $C_b \leftarrow C_b + L_{b,k} L_{b,k}^{\top}$
			    	        \STATE $\mu_b \leftarrow \mu_b + L_{b,k} (\frac{1}{c_k} y_{s_k}^{\tilde{k},\top} \left(\mathbf{e}_1 + v\right))$
			    	    \ENDFOR
			        \ENDFOR  
			    	\STATE $\mu_{b} \leftarrow -C_{b} \mu_{b}$
			    \ENDFOR
			    \STATE $C_b \leftarrow C_b^{-1}$
			    \RETURN $(\mu_b, C_b)_{1 \leq b \leq m_{\Pred}}$
			\end{algorithmic}
			\caption{\label{alg:predlast} Ordering prediction variables last}
		\end{algorithm}
	\end{minipage}
	\caption{The algorithms to use when ordering prediction points first or last. Here, we partition the prediction variables into batches, as $\I_{\Pred} = \bigcup_{b=1}^{m_{\Pred}} \J_b$.}
\end{figure}

\subsection{Ordering the prediction points last, for accurate extrapolation}
\label{apsssec:predlast}

Splitting the prediction set $\I_{\Pred} = \bigcup \limits_{1 \leq b \leq m_{\Pred}} \J_{b}$ into $m_{\Pred}$ batches of $n_{\Pred}$ predictions, we want to compute the conditional mean vector and covariance matrix of the variables in each batch separately, by using the inverse Cholesky factor $\bar{L}^\rho$ of the joint covariance matrix obtained from KL-minimization subject to the sparsity constraint given by $\bar{S} = S_{\prec, l, \rho, \lambda} \cup \left\{ (i,j) : j \in \J_{b}\right\}$.
Naively, this requires us to recompute the inverse Cholesky factor $L$ for every batch, leading to a computational complexity of $\O\left( m_{\Pred} (N + n_{\Pred}) (\rho^{\tilde{d}} + n_{\Pred})^2 \right)$. 
However, by reusing a part of the computational complexity across different batches, \cref{alg:predlast} is to instead achieve computational complexity of $\O\left( (N + n_{\Pred}) ( \rho^{2\tilde{d}} + m_{\Pred}\left(\rho^{\tilde{d}} + n_{\Pred}^2 \right) \right)$.
In the following, we derive the formulae used by this algorithm to compute the conditional mean and covariance.
For a fixed batch $\J_{b}$, define $\bar{\KM}$ as the approximate joint covariance matrix implied by the inverse Cholesky factor $\bar{L}^{\rho}$.
It has the block-structure 
\begin{scriptsize}
\begin{equation}
    \begin{pmatrix}
        \bar{\KM}_{\Train,\Train} & \bar{\KM}_{\Train,b}\\
        \bar{\KM}_{b,\Train} & \bar{\KM}_{b,b}
    \end{pmatrix}
    \eqqcolon 
    \begin{pmatrix}
        \bar{\IKM}_{\Train,\Train} & \bar{\IKM}_{\Train,b}\\
        \bar{\IKM}_{b,\Train} & \bar{\IKM}_{b,b}
    \end{pmatrix}^{-1}
    =
    \begin{pmatrix}
        \bar{L}_{\Train,\Train}^{\top} & \bar{L}_{b,\Train}^{\top}\\
        0 & \bar{L}_{b,b}^{\top}
    \end{pmatrix}^{-1}
    \begin{pmatrix}
        \bar{L}_{\Train,\Train} & 0\\
        \bar{L}_{b,\Train} & \bar{L}_{b,b}
    \end{pmatrix}^{-1}
    \eqqcolon 
    \bar{L}^{\rho, -\top} 
    \bar{L}^{\rho, -1},
\end{equation}
\end{scriptsize}
where $\bar{L}^{\rho}$ is the inverse-Cholesky factor obtained by applying KL-minimization to the joint covariance matrix subject to the sparsity constraint given by $\bar{S}$.
We can then write the posterior mean and covariance of a GP $X \sim \mathcal{N}(0,\bar{\KM})$ as
\begin{footnotesize}
\begin{align}
    &\Expect\left[X_{b}|X_{\Train} = y\right] = \bar{\KM}_{b,\Train} \bar{\KM}_{\Train,\Train}^{-1}y = -\bar{\IKM}_{b,b}^{-1} \bar{\IKM}_{b,\Train}y
    = - \left( \bar{L}_{b,\Train} \bar{L}_{b,\Train}^{\top}  +  \bar{L}_{b,b} \bar{L}_{b,b}^{\top}\right)^{-1} \bar{L}_{b,\Train} \bar{L}_{\Train,\Train}^{\top}y\\
    &\Cov\left[X_{b}|X_{\Train}\right] 
    = \bar{\KM}_{b,b} - \bar{\KM}_{b,\Train} \bar{\KM}_{\Train,\Train}^{-1} \bar{\KM}_{\Train,b} 
    = \bar{\IKM}_{b,b}^{-1}
    =  \left(\bar{L}_{b,\Train} \bar{L}_{b,\Train}^{\top}  +  \bar{L}_{b,b} \bar{L}_{b,b}^{\top} \right)^{-1}
\end{align}
\end{footnotesize}
Expanding the matrix multiplications into sums, this can be rewritten as 
\begin{footnotesize}
\begin{align}
    &\Expect\left[X_{b}|X_{\Train} \right]
    = -\left( \bar{L}_{b,b} \bar{L}_{b,b}^{\top} + \sum \limits_{k \in \I_{\Train}} \bar{L}_{b,k} \otimes \bar{L}_{b,k} \right)^{-1} 
    \sum \limits_{k \in \I_{\Train}} \bar{L}_{b,k} \left( y^{\top} \bar{L}_{\Train,k} \right).\\
    &\Cov\left[X_{b}|X_{\Train}\right] 
    =\left(\bar{L}_{b,b} \bar{L}_{b,b}^{\top} + \sum \limits_{k \in \I_{\Train}} \bar{L}_{b,k} \otimes \bar{L}_{b,k} \right)^{-1}
\end{align}
\end{footnotesize}
$\bar{L}_{b,b}$ is simply the Cholesky factor of $\KM_{b,b}^{-1}$.
Thus, given $\left(y^{\top} \bar{L}_{\Train,k}, L_{b,k}\right)_{k \leadsto \tilde{k}}$, the above expressions can be evaluated in computational complexity $\O(n_{\Pred}^3 + N_{\Train} n_{\Pred}^2 + n_{\Pred} \#S)$ in time and $\O(n_{\Pred}^2+ \max_{\tilde{l} \in \tilde{\I}_{k}} \# \tilde{l})$ in space.
Naively, computing the $\left(y^{\top} L_{\Train,k}, L_{b,k}\right)_{k \leadsto \tilde{k}}$ for each batch has computational complexity $\O(m_{\Pred}(\# \tilde{k} + n_{\Pred})^3)$ which becomes the bottleneck for large numbers of batches.
However, as we will see, $\langle L_{\Train,k}, y \rangle$ and $L_{b,k}$ can be computed in computational complexity $\O( (\#  \tilde{k} + n_{\Pred})^3 + m_{\Pred} (\#  \tilde{k} + n_{\Pred})^2)$ by reusing parts of the computation.
Fix a supernodal index $\tilde{k} \in \tilde{I}$ and define the corresponding exact joint covariance matrix as
\begin{equation}
    \KM^{\tilde{k}} \defeq \left(\KM_{ij}\right)_{\left\{i,j \in \tilde{k} \cup \J_{b}\right\}} 
    = 
    \begin{pmatrix}
        \KM_{\tilde{k},\tilde{k}} && \KM_{\tilde{k},b}\\
        \KM_{b,\tilde{k}} && \KM_{b,b}
    \end{pmatrix}
\end{equation}
For any $k \leadsto \tilde{k}$ the column $L^{\rho}_{:,k}$ is, according to \cref{eqn:defcolL}, equal to 
\begin{equation}
    \frac{\left(\KM^{\tilde{k}}_{k:,k:}\right)^{-1} \mathbf{e}_1}{\sqrt{\mathbf{e}_1^{\top} \left(\KM^{\tilde{k}}_{k:,k:} \right)^{-1} \mathbf{e}_1}}.
\end{equation}
Let as before $U^{\tilde{k}} U^{\tilde{k},\top} = \KM_{\tilde{k},\tilde{k}}$.
Using the Sherman-Morrison-Woodbury matrix identity we can then rewrite $\KM_{k:, k:}^{\tilde{k}, -1} \mathbf{e}_{1}$ as 
\begin{footnotesize}
\begin{align*}
&\KM_{k:, k:}^{\tilde{k}, -1} \mathbf{e}_1
=
\begin{pmatrix}
\Id                 & 0   \\
- \KM_{b,b}^{-1} \KM_{b,s_k} & \Id  
\end{pmatrix}
\begin{pmatrix}
\left(\KM_{s_k,s_k} - \KM_{s_k, b} \KM_{b,b}^{-1} \KM_{b, s_k} \right)^{-1}           & 0   \\
0 & \KM_{b,b}^{-1}
\end{pmatrix}
\begin{pmatrix}
\Id & - \KM_{s_k,b} \KM_{b,b}^{-1}  \\
0   & \Id  
\end{pmatrix}
\mathbf{e}_1\\
=&
\begin{pmatrix}
    &\left( \KM_{s_k,s_k} - \KM_{s_k, b} \KM_{b,b}^{-1} \KM_{b, s_k}\right)^{-1} \mathbf{e}_1 \\
    &- \KM_{b,b}^{-1} \KM_{b,s_k}\left( \KM_{s_k,s_k} - \KM_{s_k, b} \KM_{b,b}^{-1} \KM_{b, s_k}\right)^{-1} \mathbf{e}_1
\end{pmatrix}\\
=&
\begin{pmatrix}
    &\left( \KM_{s_k,s_k}^{-1} - \KM_{s_k,s_k}^{-1} \KM_{s_k,b}\left( - \KM_{b,b} + \KM_{b, s_k} \KM_{s_k,s_k}^{-1} \KM_{s_k, b}  \right)^{-1} \KM_{b,s_k} \KM_{s_k, s_k}^{-1}   \right)\mathbf{e}_1 \\
    &- \KM_{b,b}^{-1} \KM_{b,s_k}\left( \KM_{s_k,s_k}^{-1} - \KM_{s_k,s_k}^{-1} \KM_{s_k,b}\left( - \KM_{b,b} + \KM_{b, s_k} \KM_{s_k,s_k}^{-1} \KM_{s_k, b}  \right)^{-1} \KM_{b,s_k} \KM_{s_k, s_k}^{-1}   \right) \mathbf{e}_1
\end{pmatrix}
\end{align*}
\end{footnotesize}
Using Equation~\eqref{eqn:restrictUUT} and setting $B^{\tilde{k}} \defeq U^{\tilde{k},-1} \KM_{\tilde{k},b}$, we obtain
\begin{equation}
\bar{\KM}_{k:, k:}^{\tilde{k}, -1} \mathbf{e}_1 = \frac{1}{U_{k,k}^{\tilde{k}}}
\begin{pmatrix}
    & U_{s_k,s_k}^{\tilde{k},-\top} \left( \mathbf{e}_1 + B_{s_k,b}^{\tilde{k}} \left(  \KM_{b,b} - B_{s_k,b}^{\tilde{k},\top} B_{s_k,b}^{\tilde{k}}\right)^{-1}B_{k,b}^{\tilde{k},\top} \right)\\
    &- \KM_{b,b}^{-1} B_{s_k,b}^{\tilde{k},\top}\left( \mathbf{e}_1 + B_{s_k,b}^{\tilde{k}} \left( \KM_{b,b} - B_{s_k,b}^{\tilde{k},\top} B_{s_k,b}^{\tilde{k}}\right)^{-1}B_{k,b}^{\tilde{k},\top} \right)
\end{pmatrix}.
\end{equation}
Setting $y^{\tilde{k}} = U^{\tilde{k},-1}y_{s_{\tilde{k}}}$, this yields the formulae
\begin{align}
&y^{\top} \bar{L}_{\Train,k} = \frac{y^{\tilde{k},\top}_{k} + y_{s_k}^{\tilde{k},\top} B_{s_k,b}^{\tilde{k}} \left( \KM_{b,b} - B_{s_k,b}^{\tilde{k},\top} B_{s_k,b}^{\tilde{k}}\right)^{-1}B_{k,b}^{\tilde{k},\top}}{c_k}\\
&\bar{L}_{b,k} = \frac{- \KM_{b,b}^{-1} B_{s_k,b}^{\tilde{k},\top}\left( \mathbf{e}_1 + B_{s_k,b}^{\tilde{k}} \left( \KM_{b,b} - B_{s_k,b}^{\tilde{k},\top} B_{s_k,b}^{\tilde{k}}\right)^{-1}B_{k,b}^{\tilde{k},\top} \right)}{c_k},
\end{align}
where 
\begin{equation}
c_k \defeq \sqrt{1 + B_{k,b}^{\tilde{k}} \left( \KM_{b,b} - B_{s_k,b}^{\tilde{k},\top} B_{s_k,b}^{\tilde{k}}\right)^{-1}B_{k,b}^{\tilde{k},\top}}.
\end{equation}
\cref{alg:predlast} implements the formulae above.
Since $U^{\tilde{k}}$ does not depend on $b$, it only has has to be computed once and can be used to compute the $B^{\tilde{k}}$ and $y^{\tilde{y}}$ for all $1 \leq b \leq m_{\Pred}$.

\end{document}